\setlist[enumerate,1]{label=(\roman*)}
\numberwithin{equation}{section}
\declaretheoremstyle[
  shaded={bgcolor=\thmcolor,%{gray}{0.95},
  %margin=4pt,
  %textwidth={\linewidth-4pt}
  }
]{plain}
\declaretheoremstyle[
  headfont=\normalfont\bfseries,
  bodyfont=\normalfont,
  shaded={bgcolor=\defcolor}
]{noital}
\declaretheoremstyle[
  headfont=\normalfont\bfseries,
  bodyfont=\normalfont,
]{noital}
\declaretheorem[style=plain,numberwithin=section,name=Theorem]{theorem}
\declaretheorem[style=plain,sibling=theorem,name=Proposition]{proposition}
\declaretheorem[style=plain,sibling=theorem,name=Lemma]{lemma}
\declaretheorem[style=plain,sibling=theorem,name=Conjecture]{conjecture}
\declaretheorem[style=plain,sibling=theorem,name=Question]{question}
\declaretheorem[style=noital,sibling=theorem,name=Remark]{remark}
\newcommand{\indef}[1]{\emph{#1}}
\newcommand{\defined}{\mathrel{\coloneqq}}
\DeclarePairedDelimiter{\p}{\lparen}{\rparen}
\renewcommand{\leq}{\leqslant}
\renewcommand{\geq}{\geqslant}
\newcommand{\st}{\mathbin{\colon}}
\DeclarePairedDelimiter{\set}{\lbrace}{\rbrace}
\newcommand{\emptyset}{\varnothing}
\DeclarePairedDelimiter{\card}{\lvert}{\rvert}
\newcommand{\from}{\colon}
\DeclarePairedDelimiter{\floor}{\lfloor}{\rfloor}
\DeclarePairedDelimiter{\ceil}{\lceil}{\rceil}
\DeclareMathOperator{\loglog}{log\,log}
\newcommand{\divides}{\mathrel{\mid}}
\newcommand{\ndivides}{\mathrel{\nmid}}
\newcommand{\mod}[1]{\ (\mathrm{mod}\ #1)}
\DeclareMathOperator{\lcm}{lcm}
\newcommand{\eps}{\varepsilon}
\newcommand{\NN}{\mathbb{N}}
\newcommand{\RR}{\mathbb{R}}
\newcommand{\ZZ}{\mathbb{Z}}
\newcommand{\cA}{\mathcal{A}}
\newcommand{\cB}{\mathcal{B}}
\newcommand{\cC}{\mathcal{C}}
\newcommand{\cN}{\mathcal{N}}
\newcommand{\cV}{\mathcal{V}}
\newcommand{\rB}{\mathrm{B}}
\newcommand{\rG}{\mathrm{G}}
\newcommand{\rP}{\mathrm{P}}
\newcommand{\rR}{\mathrm{R}}
\newcommand{\bfa}{\mathbf{a}}
\newcommand{\bfs}{\mathbf{s}}
\newcommand{\bft}{\mathbf{t}}
\newcommand{\bfu}{\mathbf{u}}
\newcommand{\bfv}{\mathbf{v}}
\newcommand{\bfw}{\mathbf{w}}
\newcommand{\bfx}{\mathbf{x}}
\newcommand{\bfy}{\mathbf{y}}
\newcommand{\bfz}{\mathbf{z}}
\newcommand{\ccint}[3]{
\draw [{Bracket[scale=2]}-{Bracket[scale=2]}, thick, #3] (#1,0) -- (#2,0);
\fill[opacity = 1, #3]
(#1, -0.010) -- (#1, 0.010)
-- (#2, 0.010) -- (#2, -0.010)
-- cycle;
}
\newcommand{\ocint}[3]{
\draw [{Parenthesis[scale=2]}-{Bracket[scale=2]}, thick, #3] (#1,0) -- (#2,0);
\fill[opacity = 1, #3]
{[rounded corners=2.5] (#2, -0.010) -- (#1, -0.010) -- (#1, 0.010)}
-- (#2, 0.010) -- cycle;
}
\newcommand{\coint}[3]{
\draw [{Bracket[scale=2]}-{Parenthesis[scale=2]}, thick, #3] (#1,0) -- (#2,0);
\fill[opacity = 1, #3]
{[rounded corners=2.5] (#1, 0.010)
-- (#2, 0.010) -- (#2, -0.010)}
-- (#1, -0.010) -- cycle;
}
\begin{document}

\title[The number of monochromatic solutions to multiplicative equations]{On the number of monochromatic solutions to multiplicative equations}

\author[L. Aragão]{Lucas Aragão}
\address{IMPA, Estrada Dona Castorina 110, Jardim Botânico, Rio de Janeiro, RJ, Brasil}
\email{l.aragao@impa.br}

\author[J. Chapman]{Jonathan Chapman}
\address{School of Mathematics, University of Bristol, Bristol, BS8 1UG, UK, and the Heilbronn
Institute for Mathematical Research, Bristol, UK}
\email{jonathan.chapman@bristol.ac.uk}

\author[M. Ortega]{Miquel Ortega}
\address{Departament de Matemàtiques, Universitat Politècnica de Catalunya, Barcelona, Spain}
\email{miquel.ortega.sanchez-colomer@upc.edu}

\author[V. Souza]{Victor Souza}
\address{Department of Computer Science and Technology, University of Cambridge, UK}
\email{vss28@cam.ac.uk}

\begin{abstract}
The following question was asked by Prendiville:
given an $r$-colouring of the interval $\set{2, \dotsc, N}$, what is the minimum number of monochromatic solutions of the equation $xy = z$?
For $r=2$, we show that there are always asymptotically at least $(1/2\sqrt{2}) N^{1/2} \log N$ monochromatic solutions, and that the leading constant is sharp.
For $r=3$ and $r=4$ we obtain tight results up to a multiplicative logarithmic factor.
We also provide bounds for more colours and other multiplicative equations.
\end{abstract}

\maketitle

%----------------------------------------------------------%
%----------------------------------------------------------%

\section{Introduction}

Counting monochromatic solutions of equations in coloured sets of integers is one of the prototypical subjects of arithmetic Ramsey theory.
The foundational result of Schur~\cite{Schur1916-lc} states that any $r$-colouring of $[N] \defined \set{1, \dotsc, N}$ has at least one monochromatic solution to the equation $x + y = z$, provided $N$ is large enough as a function of $r$.
The smallest such $N$ is called the \indef{Schur number} $S(r)$.
In fact, an $r$-colouring of $[N]$ is guaranteed to have at least $(c_r - o(1))N^2$ monochromatic solutions to $x + y = z$ for sufficiently large $N$.
This is known as a \indef{supersaturation} result.

Settling a question posed by Graham~\cite{Graham1996-fw}, both Robertson and Zeilberger~\cite{Robertson1998-sy}, and independently Schoen~\cite{Schoen1999-bz} showed that $c_2 = 1/11$ is the optimal constant\footnote{In their work, the constant $1/22$ appears instead of $1/11$ due to the fact that they regard the solutions $x + y = z$ and $y + x = z$ with $x \neq y$ as the same.} for the supersaturated Schur theorem in two colours.
The value of $1/11$ is achieved by colouring the integers $\set{\floor{ 4N/11}, \dotsc, \floor{10N/11}}$ red and the remaining elements of $\set{1, \dotsc, N}$ blue.
Schoen further determined that, up to some minor modifications, this is the only extremal colouring.
Aside from $c_1=1/2$, no other values of $c_r$ are known.

While there has been much work on additive equations, far less is known about multiplicative equations.
Consider the multiplicative analogue of Schur's equation: $x y = z$.
To avoid trivialities, we restrict our attention to solutions with $x, y, z > 1$.
It is a standard observation that if we restrict, say, to the powers of two, then solutions $2^x \cdot 2^y = 2^z$ correspond to solutions of the additive equation $x + y = z$.
Consequently, Schur's theorem implies that if $N$ is sufficiently large relative to $r$, then any $r$-colouring of $[N]$ produces a monochromatic solution to $x y = z$ with $x, y, z > 1$.

Motivated by supersaturation results for additive equations, Prendiville posed the following problem during the 2022 British Combinatorial Conference: what is the minimum number of monochromatic solutions of $x y = z$ in a $3$-colouring of $[2,N]$?
He observed that in the colouring $[2, N] = R \sqcup B \sqcup G$, where $R = [2, N^{1/4}]$, $B = (N^{1/4}, N^{1/2}]$ and $G = (N^{1/2}, N]$, there are only $O(N^{1/4} \log N)$ monochromatic integer solutions of $x y = z$, all in $R$.
In particular, as opposed to the additive case, one can find colourings of $[N]$ where almost all solutions to $x y = z$ are not monochromatic.
This is possible because in a typical solution to the equation $x y = z$ in $[N]$, the numbers $x$ and $y$ are substantially smaller than $z$.
A similar phenomenon was previously observed by Prendiville~\cite{Prendiville2021-oe} for the equation $x - y = z^2$.
On the other hand, by considering powers of two and applying the supersaturated version of Schur's theorem, we can only guarantee $c_r (\log N)^2$ monochromatic solutions in any $r$-colouring of $[N]$.

Prendiville asked this question for three colours because he was interested in techniques that would generalise to multiple colours.
Nonetheless, the same question was also unsolved for two colours.
He provided the colouring $[2,N] = R \sqcup B$ where $R = [2,N^{1/2}]$ and $B = (N^{1/2}, N]$, which has $O(N^{1/2} \log N)$ monochromatic solutions.

For an arbitrary number of colours $r$, one can extend the `restricting to powers of two' argument elucidated above to obtain $c N^{1/S(r)}$ monochromatic solutions, for some $c=c(r)>0$.
Indeed, for any $a > 0$, the set $T_a \defined \set{a, a^2, \dotsc, a^{S(r)}}$ is guaranteed to have a monochromatic solution to $x y = z$ in any $r$-colouring.
For each $2 \leq a \leq N^{1/S(r)}$ which is not a perfect power, we obtain a unique monochromatic solution inside $T_a \subseteq [2,N]$.
As the perfect powers are rather rare, this argument furnishes $c N^{1/S(r)}$ monochromatic solutions to $xy=z$.
For two colours, this provides $cN^{1/5}$ monochromatic solutions.
We remark that Mattos, Mergoni Checchelli and Parczyk~\cite{Mattos2023}, independently have shown, among other results, that any $2$-colouring of $[2,N]$ has at least $N^{1/3}$ monochromatic solutions to $x y = z$.
Our first main theorem improves the exponent in the supersaturated multiplicative Schur theorem from $1/S(r)$ to $1/S(r-1)$.

%--------------------------------------%
\begin{theorem}
\label{thm:lower-bound}
For each $r \geq 2$, any $r$-colouring of $\set{2, \dotsc, N}$ contains at least
\begin{equation*}
    c N^{1/S(r-1)}
\end{equation*}
monochromatic solutions to $x y = z$ for large enough $N$, where $c = c(r) > 0$.
Moreover, the exponent $1/S(r-1)$ is sharp for $2 \leq r \leq 4$.
\end{theorem}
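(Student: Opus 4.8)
The plan is to prove the lower bound by a \emph{tower} argument and to establish the sharpness of the exponent by an explicit construction. For the lower bound, set $k \defined S(r-1)$ and $M \defined N^{1/k}$, and to each non-perfect-power $a \in [2, M]$ associate the tower $T_a \defined \set{a, a^2, \dotsc, a^k} \subseteq [2, N]$; there are $(1-o(1))M$ such bases. Encode each tower by its colour pattern $\pi_a \defined (\chi(a), \chi(a^2), \dotsc, \chi(a^k)) \in [r]^k$, and by pigeonhole fix a pattern $\pi^*$ realised by a set $\cA^*$ of $\gg_r M$ bases. Viewing $\pi^*$ as an $r$-colouring of $[k] = [S(r-1)]$: if $\pi^*$ uses at most $r-1$ colours, then by the definition of $S(r-1)$ it contains a monochromatic triple $i + j = \ell$, so that $a^i a^j = a^\ell$ is a monochromatic solution for every $a \in \cA^*$; since the triple $(i,j,\ell)$ is fixed, distinct bases give distinct solutions, and we obtain $\gg_r M = \gg_r N^{1/S(r-1)}$ of them. (Running this with $k = S(r)$ instead makes the pattern automatically contain a monochromatic triple, which recovers the known $cN^{1/S(r)}$ bound; the whole point is to shave one Schur level off.)

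The principal difficulty is the case where $\pi^*$ uses all $r$ colours. Here I would keep the large family $\cA^*$ of bases behaving identically on their towers and exploit the mixed products $a^i b^j$ with $a, b \in \cA^*$ and $i + j \le k$, all of which still lie in $[2, N]$. Starting from the monochromatic set $\cA^*$, build a nested sequence of monochromatic families by repeatedly multiplying together elements already produced: at each stage either a positive proportion of the new products repeats the current colour --- yielding $\gg_r M$ monochromatic solutions $xy = z$ and finishing --- or pigeonhole forces a new colour while keeping the family of size $\gg_r M$ and its multiplicative complexity (the number of factors drawn from $[2, M]$ appearing in each element, which must stay at most $k$ so that the elements remain below $N = M^k$) under control. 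Since $S(r-1)$ grows exponentially in $r$ (in particular $S(r-1) \ge 2^{r-1} \ge r$), the complexity budget is comfortably larger than what is needed to force all $r$ colours to appear, after which the next step must produce a monochromatic solution. I expect the bookkeeping of which colours are reachable at small complexity --- together with the separate, more delicate treatment of $r = 2$, where the budget $S(1) = 2$ is too small and one argues directly with squares and pairwise products, eventually obtaining the sharp $\tfrac{1}{2\sqrt{2}}N^{1/2}\log N$ bound from the abstract --- to be the technical heart of the proof.

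For sharpness of the exponent when $2 \le r \le 4$, I would exhibit an $r$-colouring of $[2,N]$ with only $N^{1/S(r-1)+o(1)}$ monochromatic solutions. Colour $[2, N^{1/S(r-1)}]$ with a single colour; by construction any monochromatic solution meeting this block lies entirely inside it, and the number of such is $\sum_{z \le N^{1/S(r-1)}} (d(z) - 2) \sim \tfrac{1}{S(r-1)} N^{1/S(r-1)} \log N$. It therefore suffices to colour $(N^{1/S(r-1)}, N]$ with the remaining $r-1$ colours so that it contains no monochromatic solution whatsoever. On the logarithmic scale $(N^{1/S(r-1)}, N]$ rescales to the real interval $(1, S(r-1)]$ and $xy=z$ becomes $u+v=w$, so what is needed is a partition of $(1, S(r-1)]$ into $r-1$ parts, each containing no solution of $u+v=w$. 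For $r-1 = 1$ this is trivial (the interval $(1,2]$ has no room); for $r-1 = 2$ one can take $(1,2]\cup(4,5]$ against $(2,4]$; and for $r-1 = 3$ an analogous explicit partition of $(1,14]$ works --- here the relevant continuous Schur-type threshold still equals $S(r-1)$, which is precisely why sharpness is claimed only up to $r=4$, as for $r-1 \ge 4$ it is unclear that $(1,S(r-1)]$ admits such a partition. A short check that dilating such a partition onto $(N^{1/S(r-1)}, N]$ creates no monochromatic solution straddling the boundary with the bottom block then completes the construction.
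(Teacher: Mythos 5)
Your case split (at most $r-1$ colours in the repeating pattern versus all $r$) and the guiding idea of ``shaving one Schur level off'' are exactly the paper's, and your first case is handled correctly: a positive proportion of towers $T_a = \{a, a^2, \dots, a^{S(r-1)}\}$ share a pattern, and if that pattern omits a colour you get a fixed Schur triple $(i,j,\ell)$ producing $\gg_r N^{1/S(r-1)}$ distinct solutions. Your sharpness argument also matches the paper's: colour the bottom block $[2, N^{1/S(r-1)}]$ with one colour and use a sum-free $(r-1)$-colouring of the real interval $(1, S(r-1)]$ on a logarithmic scale above it; the paper formalises this via the interval Schur number $I(r)$ and checks $I(r) = S(r)$ for $r \leq 3$.

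The genuine gap is in your second case, when the shared pattern $\pi^*$ uses all $r$ colours. You describe a ``nested sequence of monochromatic families'' built from mixed products $a^i b^j$ with a ``complexity'' budget, but this is a sketch, not a proof: the colours of mixed products $a^i b^j$ ($a \neq b$) are entirely unconstrained by $\pi^*$, the proposed pigeonholing does not obviously keep both the family size and the complexity bounded as required, and even with both under control there is no mechanism forcing a monochromatic product rather than a colour that merely cycles. You yourself flag this as ``the technical heart'' and observe that it already breaks for $r = 2$ (budget $S(1) = 2$), proposing a separate ad hoc treatment. The paper avoids all of this by enriching the tower to the two-dimensional pattern $M_a = \set{2^{ji} a^i} \cup \set{2^j}$ and applying van der Waerden: one finds $S!+1$ ``columns'' $M_{a,j}$ with identical colour patterns, so that in the all-$r$-colours case the spine $M' = \set{2^j}$ must share a colour with some element $(2^{j_0}a)^i$ of a column, and the identical pattern at the shifted column $j_0 + (S!/i)d$ then closes the triple $(2^{j_0}a)^i \cdot 2^{S!d} = (2^{j_0+(S!/i)d}a)^i$. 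This handles $r = 2$ uniformly with no special case; your impression that $r=2$ requires a separate argument appealing to the $\tfrac{1}{2\sqrt{2}} N^{1/2}\log N$ bound confuses \Cref{thm:lower-bound} with the much harder \Cref{thm:two-colours}. To complete your proof along the paper's lines you would need to replace the bare towers with a structure, such as $M_a$, whose shape lets you force a matching colour between the column and a shared geometric progression, rather than hoping pigeonhole alone does it.
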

%--------------------------------------%

An immediate corollary of this theorem is that every $2$-colouring of $[2, N]$ has at least $cN^{1/2}$ monochromatic integer solutions to $x y = z$. The $2$-colouring provided by Prendiville therefore shows that this bound is sharp up to a logarithmic factor.
In view of the results of Robertson--Zeilberger and Schoen for additive equations, it is natural to ask which of the two bounds $N^{1/2}$ and $N^{1/2}\log N$ is closest to the truth, and whether the leading constant $c$ can be determined exactly.
Our next main result answers both of these questions.

%--------------------------------------%
\begin{theorem}
\label{thm:two-colours}
Any $2$-colouring of $\set{2, \dotsc, N}$ has a colour class with at least
\begin{equation}
\label{eq:main-lb}
     \p[\bigg]{\frac{1}{2 \sqrt{2}} - o(1)} N^{1/2} \log N
\end{equation}
non-degenerate solutions to $x y = z$ as $N \to \infty$.
\end{theorem}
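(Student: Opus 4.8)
The plan is to localise the count to solutions whose largest entry $z$ sits just below $N^{1/2}$ (together with their multiplicative dilates), to reduce the whole statement to an optimisation over how the colouring behaves near the scale $N^{1/2}$, and to read off the constant $1/(2\sqrt2)$ from that optimisation. Write $A_R,A_B$ for the two colour classes and, for $n\le N$, set $\tau_c(n)=\#\{d\mid n:\ 1<d<n,\ \chi(d)=\chi(n/d)=c\}$, so that the number of monochromatic solutions of colour $c$ is $\sum_{\chi(n)=c}\tau_c(n)$.

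\textbf{Step 1 (reduction to small scales).} For $z\le N^{1/2}$ every factorisation $z=xy$ with $x,y>1$ has $x,y\le N^{1/2}$, so the monochromatic solutions with $z\le N^{1/2}$ depend only on $\chi$ restricted to $[2,N^{1/2}]$; and since $\sum_{z\le N^{1/2}}(\tau(z)-2)\sim\tfrac12 N^{1/2}\log N$, this range already carries solutions of the target order. I would split $\sum_{\chi(n)=c}\tau_c(n)$ into a contribution from $n\le N^{1/2}$, governed by $A_c\cap[2,N^{1/2}]$, and a contribution from $n>N^{1/2}$, discarding nothing.

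\textbf{Step 2 (structural dichotomy).} The heart of the matter is that a colour class with few monochromatic solutions is severely constrained near $N^{1/2}$. On the one hand, if $A_c$ contains almost all of an initial block $[2,s]$ with $s\le N^{1/2}$, then restricting the estimate $\sum_{n\le s}\tau(n)\sim s\log s$ to the monochromatic subcount already gives $A_c$ at least $\bigl(1-o(1)\bigr)\sum_{s/2<n\le s}(\tau(n)-2)\asymp s\log N$ solutions with product in the top dyadic block, which is $\asymp N^{1/2}\log N$ once $s\asymp N^{1/2}$. On the other hand, if \emph{neither} class is essentially an initial interval on $[2,N^{1/2}]$, then products (small)$\times$(large) accumulate $\Omega(N^{1/2}\log N)$ monochromatic solutions: a colour class that is thin on $[2,N^{1/2}]$ yet of positive density in $[2,N]$ must occupy a positive proportion of some window $(N/2^{k},N/2^{k-1}]$ high up, and multiplying it by the $\Omega(N^{1/2})$ small numbers of the same colour (pigeonhole between the two colours) produces that many monochromatic triples with $z$ near $N$ — essentially the mechanism behind \Cref{thm:lower-bound}, now run across the $\asymp\log N$ dyadic scales of the small factor to recover the logarithm. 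So, up to an error $o(N^{1/2}\log N)$, one may assume each colour class agrees with an initial interval on $[2,N^{1/2}]$ and is under control above it.

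\textbf{Step 3 (optimisation and the constant), and the obstacle.} At this point the colouring is, up to negligible perturbations, pinned down by the single parameter $t\in(0,1]$ with $A_R\cap[2,N^{1/2}]\approx[2,tN^{1/2}]$ and $A_B\cap[2,N^{1/2}]\approx(tN^{1/2},N^{1/2}]$, together with an admissible colouring of $(N^{1/2},N]$; feeding this into the two estimates of Step~2 bounds $\#A_R$ and $\#A_B$ below by $\bigl(1-o(1)\bigr)N^{1/2}\log N$ times explicit functions of $t$, and one must show that $\max$ of the two is always at least $\bigl(\tfrac1{2\sqrt2}-o(1)\bigr)N^{1/2}\log N$, with equality at the balancing value of $t$ — the $\sqrt2$ arising because the logarithmic mass on $[2,N^{1/2}]$ gets divided so that the heavier colour still retains a $1/\sqrt2$ fraction of $\tfrac12 N^{1/2}\log N$; a compactness argument reduces the admissible shapes of the colouring on $(N^{1/2},N]$ to a finite list, making this a finite optimisation. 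I expect Step~2, with the \emph{sharp} leading constant rather than merely up to a multiplicative factor, to be the real obstacle: it amounts to classifying near-extremal colourings (the analogue of Schoen's classification in the additive case) and showing any departure from the two-interval structure increases $\max(\#A_R,\#A_B)$ by a positive proportion — in particular ruling out subtler high colour classes (supported on rough numbers, on sparse dilation-free sets, and so on) that might try to hide their solutions. Propagating the $o(1)$ losses — perfect powers, divisor-sum error terms, and boundary effects near $N^{1/2}$ and near $N$ — uniformly over all colourings is the remaining technical bulk.
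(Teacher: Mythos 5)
Your Step~2 has a genuine gap, and you have already sensed it: the dichotomy, as written, does not close the argument at the sharp constant, and one of its branches looks internally inconsistent. The ``initial block'' branch only delivers the right order of magnitude (with an unspecified implied constant once $s\asymp N^{1/2}$, not $1/(2\sqrt2)$). The other branch asks for a colour class that is \emph{thin} on $[2,N^{1/2}]$ yet also supplies $\Omega(N^{1/2})$ small numbers of the \emph{same} colour, which cannot happen; more fundamentally, it passes over the real obstruction, namely that even when $i$ and $j$ share a colour, the product $ij$ need not, and nothing in your sketch forces the product to be monochromatic. That is exactly the hard step, and the paper handles it with a tool you never mention: the Erd\H{o}s--Szemer\'edi sum--product theorem. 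Letting $k$ be the first blue element (with $2$ red), the paper disposes of small $k$ via the stability result (\Cref{thm:stability}); for $k$ close to $(N/2)^{1/2}$ it counts the $\sim\frac{1}{2\sqrt2}N^{1/2}\log N$ pairs $(i,j)$ with $i,j<k$ and $ij\le(N/2)^{1/2}$, and a dyadic ``abundant class'' decomposition shows that unless all but an $o(1)$ proportion of the products $ij$ are red, one extracts a set $A_h$ of such products with $\card{A_h}\ge N^{1/2-\eps}$ and $\card{A_h+A_h}\le 2N^{1/2}$; sum--product then forces $\card{A_h\cdot A_h}\ge N^{1/2+\gamma-o(1)}$, and a short colour case-analysis converts each element of $A_h\cdot A_h$ into a monochromatic triple. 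This is precisely the ``classification of near-extremal colourings'' you flagged as the obstacle, and without a substitute for it your proposal does not go through.

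A secondary point: the Step~3 optimisation picture misdescribes where $1/(2\sqrt2)$ comes from. In the extremal colouring $R=[2,(N/2)^{1/2}]\sqcup(N/2,N]$, $B=((N/2)^{1/2},N/2]$, \emph{all} monochromatic solutions are red and the blue class has none; there is no balancing of the two colour counts, and the heavier colour does not ``retain a $1/\sqrt2$ fraction'' of anything. The constant is simply the asymptotic number of pairs $(i,j)\in[2,k)^2$ with $ij\le(N/2)^{1/2}$ when $k\approx(N/2)^{1/2}$, and the theorem's content is that essentially all of those pairs are forced to yield red solutions.
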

%--------------------------------------%

Therefore, we have established the multiplicative analogue of Graham's problem.
A distinctive feature of this result is that we guarantee the right number of solutions, namely \eqref{eq:main-lb}, in a single colour class.
This is in fact attained by the colouring $[2,N] = R \sqcup B$ with $R = [2, (N/2)^{1/2}] \sqcup (N/2,N]$ and $B = ((N/2)^{1/2}, N/2]$.
Indeed, this colouring only has monochromatic solutions in $R$ and moreover it shows that the constant $1/2\sqrt{2}$ in this theorem is sharp.

An interesting aspect of our proof of \Cref{thm:two-colours} is the use sum-product estimates.
These are deep results in arithmetic combinatorics which, informally, state that if a set of positive integers $A$ has a small sumset $A + A \defined \set{ a + a' \st a, a' \in A}$, then the product set $A \cdot A \defined \set{a a' \st a, a' \in A}$ has to be reasonably large.

Along the way, we establish a stability result (or `inverse theorem') for $2$-colourings of $[2,N]$ with few monochromatic integer solutions to $x y = z$.

%--------------------------------------%
\begin{theorem}
\label{thm:stability}
If a $2$-colouring of $\set{2, \dotsc, N}$ has $M$ monochromatic solutions to $x y = z$, then the interval $\set{2, \dotsc, \floor{N/16M}}$ is monochromatic, provided $N$ is sufficiently large.
\end{theorem}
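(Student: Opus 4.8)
The plan is to argue by contradiction. Suppose the $2$-colouring has exactly $M$ monochromatic solutions, but $\set{2, \dotsc, \floor{N/16M}}$ is not monochromatic. Being an interval of consecutive integers, it then contains two adjacent integers of different colours, so there is a least $b$ whose colour differs from that of $2$; writing red for the colour of $2$, we get that $\set{2, \dotsc, b-1}$ is red, $b$ is blue, and $b \leq \floor{N/16M}$, hence $16Mb \leq N$. It suffices to deduce from these constraints that there are strictly more than $M$ monochromatic solutions.

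The easy input is that every solution $xy = z$ with $z \leq b-1$ forces $x, y, z \in \set{2, \dotsc, b-1}$ and hence is monochromatic; counting ordered factorisations there are $(1+o(1))\, b\log b$ of these, so $M \geq (1+o(1))\, b\log b$. Combined with $16Mb \leq N$ this already pins $b$ down to size $O(\sqrt{N/\log N})$, and in particular disposes of the range $b \asymp N^{1/2}$ where the extremal colouring of \Cref{thm:two-colours} lives. From now on $b$ is small, and the task is to exhibit of order $N/b$ monochromatic solutions, far more than the $b\log b$ coming from small $z$.

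To produce them I would bootstrap along the geometric scales $b = b_0 < b_1 < \dotsb$ with $b_{i+1}$ of order $b_i^2$, so that a product of two integers below $b_i$ stays below $b_{i+1}$. The invariant to maintain at stage $i$ is that one colour class — say red, after relabelling — contains all of $\set{2, \dotsc, b-1}$ together with at least $b_i^{\,2-o(1)}$ integers below $b_i$. Multiplying each of these red integers by each of $2, \dotsc, b-1$ gives solutions $xy = z$ with $x$ small-red, $y$ red and $z = xy$ below $b_{i+1}$ (and below $N$ at the final stage); any such solution with $z$ red is monochromatic. If a constant proportion of these products are red, then once $b_i$ is a small enough power of $N$ we have collected more than $N/16b \geq M$ monochromatic solutions, a contradiction. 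If instead a constant proportion are blue, then since each value $z \leq b_{i+1}$ is attained at most $z^{o(1)}$ times (by the divisor bound), at least $b_{i+1}^{\,2-o(1)}$ integers below $b_{i+1}$ are blue, and we restart the argument at scale $b_{i+1}$ with the two colours interchanged. The exponents double each round, so after $O(\loglog N)$ rounds the scale overtakes $N^{1/2}$ and the process must end in the first, contradictory, alternative.

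The genuine difficulty is the product-richness dichotomy itself rather than the surrounding bookkeeping: density alone does not make a colour class rich in multiplicative triples — for instance $\set{\floor{X^{1/2}}, \dotsc, X}$ has density close to $1$ yet is almost product-free — so one must exploit that the rich class at scale $b_i$ not only is dense there but also contains every integer up to $b-1$, and it is precisely the multiplication of those small integers into the dense part that creates solutions. Making this quantitative enough to track the constant $16$, and uniform enough to survive $O(\loglog N)$ iterations, is where a sum--product-type estimate, in the spirit of the one used to prove \Cref{thm:two-colours}, should be brought in, as it rules out a set that is simultaneously dense and rich in small integers from having a small product set. A final wrinkle is the very bottom of the range, where $b$ is bounded and $\floor{N/16M}$ is tiny, so the bootstrap has no small-but-growing set to start from; there I would argue more directly, using divisibility by the primes below $b$ to reduce to a dense colour class and then, once more, multiplying the small red integers into it to manufacture enough monochromatic solutions.
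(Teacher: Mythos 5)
Your approach is genuinely different from the paper's, but it has a gap that I do not think can be patched as stated. The paper does not use any density-increment or sum--product machinery for \Cref{thm:stability}: it proves a small Ramsey lemma (\Cref{lem:ramsey-pattern-two}) showing that any $2$-colouring of the explicit nine-element set $T_{a,\ell,k} = \set{\ell, k, \ell k, a, \ell a, ka, \ell k a, \ell^2 a, \ell^2 k a}$ in which $\ell$ and $k$ receive different colours contains a monochromatic solution to $xy = z$ with $a \mid z$, and then embeds roughly $N/8k$ disjoint copies $T_{a,2,k}$ for odd $a \leq N/4k$ (with a divisibility condition to keep the solutions distinct), directly producing more than $N/16k$ monochromatic solutions. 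Sum--product estimates only enter the paper's proof of \Cref{thm:two-colours}, not of \Cref{thm:stability}.

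The gap in your proposal is in the bootstrap. First, the invariant you state is literally impossible: there are at most $b_i$ integers below $b_i$, so you cannot have ``at least $b_i^{2-o(1)}$ integers below $b_i$'' in a colour class. Presumably you meant density close to $1$, i.e.\ about $b_i^{1-o(1)}$ integers, but then the iteration does not close. At stage $i$ you multiply the $\approx b_i^{1-o(1)}$ red integers below $b_i$ by each of the $b-2$ fixed small red integers in $[2,b-1]$; even if \emph{all} of the resulting products are blue, you obtain at most $O(b \cdot b_i)$ blue integers, living below $b \cdot b_i$. Passing to scale $b_{i+1} \asymp b_i^2$ requires roughly $b_{i+1}^{1-o(1)} = b_i^{2-o(1)}$ blue integers below $b_{i+1}$, which would need $b \gtrsim b_i$; this holds only at the first step and then fails, since $b$ is fixed while $b_i$ grows. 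So the exponent cannot double as you claim, and after one step the process stalls. Separately, the ``product-richness dichotomy'' on which everything rests is not proved in the proposal, only asserted to be obtainable from sum--product methods; as your own example of $\set{\floor{X^{1/2}}, \dotsc, X}$ shows, this is exactly the delicate point, and no precise statement is given. I would suggest abandoning the multi-scale bootstrap and instead looking for a single explicit small pattern, built from $2$, $k$ and a free parameter $a$, that is forced to contain a monochromatic product; that is the route the paper takes.
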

%--------------------------------------%

As we obtain a sharp result for up to four colours, \Cref{thm:lower-bound} suggests that the correct exponent of $N$ is related to Schur numbers.
To date, the only known values are $S(1) = 2$, $S(2) = 5$, $S(3) = 14$, $S(4) = 45$, and $S(5) = 161$, the last being found by Huele~\cite{Heule2018-oe} after an extensive computer search.
Our next result provides a connection with a `continuous' variant of the Schur problem.
For each $r \geq 1$, let $I(r)$ be the infimum over all real $T$ such that every $r$-colouring of the closed real interval $[1,T] \subseteq \RR$ has a monochromatic solutions to $x + y = z$.
Note that $I(r) \leq S(r)$ for all $r$.

%--------------------------------------%
\begin{theorem}
\label{thm:upper-bound}
For each $r \geq 2$, there exists an $r$-colouring of $\set{2, \dotsc, N}$ with
\begin{equation*}
    C_r N^{1/I(r-1)} \log N + O_r\p[\big]{N^{1/I(r-1)}}
\end{equation*}
monochromatic solutions to $x y = z$ as $N$ grows, where $C_r > 0$ is an explicit constant.
\end{theorem}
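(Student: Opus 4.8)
The plan is to generalise Prendiville's two-colour construction $R = [2,N^{1/2}]$, $B = (N^{1/2}, N]$ and its three-colour analogue to an arbitrary number of colours, using a near-optimal colouring of a continuous interval as the template. First I would fix $T > I(r-1)$ close to $I(r-1)$ and a $(r-1)$-colouring $\chi \from [1,T] \to [r-1]$ with no monochromatic solution to $x+y=z$; such a $\chi$ exists by definition of $I(r-1)$. Then I would build an $r$-colouring of $\set{2,\dotsc,N}$ as follows. Set $\lambda \defined N^{1/T}$ (so that $\lambda^T = N$) and reserve one colour, say colour $r$, for the ``top'' block $(\lambda^{T-1}, N] = (N^{(T-1)/T}, N]$, which by itself contains no solution $xy=z$ with $x,y,z>1$ since any product of two elements exceeds $N$. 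For the remaining integers $n \in [2, \lambda^{T-1}]$, assign the colour $\chi(\log_\lambda n) \in [r-1]$, i.e. colour $n$ according to where $\log n / \log \lambda$ falls in the scaled interval $[1, T-1] \subseteq [1,T]$.

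The key point is that a monochromatic solution $xy = z$ with all of $x,y,z$ in $[2,\lambda^{T-1}]$ would give $\log_\lambda x + \log_\lambda y = \log_\lambda z$ with all three values in $[1, T-1]$ and receiving the same $\chi$-colour, contradicting the choice of $\chi$ — \emph{provided} the colour of $n$ depends only on $\log_\lambda n$. Since $\chi$ is defined on the continuum this holds exactly, so there are \emph{no} monochromatic solutions among the bottom $r-1$ colour classes, and the only monochromatic solutions live in colour $r$, i.e. have $z \in (\lambda^{T-1}, N]$. Such a solution is determined by a factorisation $z = xy$ with $x,y \in (\lambda^{T-1}, N]$; but $x,y > \lambda^{T-1}$ forces $z = xy > \lambda^{2(T-1)} = N^{2(T-1)/T} \geq N$ as soon as $2(T-1) \geq T$, i.e. $T \geq 2$, which holds since $I(r-1) \geq I(1) = 2$. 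Hence this colour class also has no solutions, and the construction as stated gives \emph{zero} monochromatic solutions — which is too strong, meaning I have mis-stated the template.

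The correct template must therefore be slightly different: the bottom colour should be an \emph{interval starting at $2$} that is genuinely closed under multiplication up to $N$, contributing the $\Theta(N^{1/I(r-1)}\log N)$ term, while the remaining colours tile a ``logarithmic'' range using the continuous colouring. Concretely, I would instead take $R = [2, N^{1/I(r-1)}]$-like bottom block coloured with colour $1$ (this block has $\sim C N^{1/I(r-1)} \log N$ solutions $xy=z$ with $x,y,z$ all at most $N^{1/I(r-1)}$, by counting $\sum_{z \leq N^{1/I(r-1)}} (d(z) - 2) \sim N^{1/I(r-1)} \log N$ up to the right constant), and then colour $n \in (N^{1/I(r-1)}, N]$ by $\chi(\log_\lambda n)$ where the scaling is chosen so that $\log_\lambda$ maps $(N^{1/I(r-1)}, N]$ into $(1, T]$ with $T \to I(r-1)$. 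For this upper tail, any monochromatic solution $xy=z$ with $x,y,z$ all in the upper range again contradicts $\chi$; solutions with $x$ or $y$ in the bottom block but $z$ in the upper range are ruled out by checking colours don't match (adjust the break-points of $\chi$ so the bottom block's colour $1$ does not reappear near the bottom of the scaled interval); and the only surviving monochromatic solutions are those entirely within the bottom block. Finally I would optimise the constant $C_r$ by a careful choice of the break-point $N^{1/I(r-1)}$ versus the ``reserved'' top interval and extract $C_r$ from the asymptotics of $\sum_{z \leq x} (d(z)-2)$, i.e. $C_r = $ (a simple rational multiple determined by the geometry of the extremal split, analogous to $1/(2\sqrt2)$ in the two-colour case).

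The main obstacle I anticipate is \emph{not} the asymptotic count of solutions in a single multiplicatively-closed interval (that is a routine divisor-sum computation), but rather the bookkeeping at the \emph{interfaces} between colour classes: ensuring that mixed solutions $xy = z$ with $x,y$ small and $z$ large are never monochromatic requires that the value $\chi$ assigns just above the bottom block differs from the colour of the bottom block, and — more delicately — that using the \emph{real} interval colouring $\chi$ on \emph{integer} inputs via $n \mapsto \chi(\log_\lambda n)$ does not create accidental monochromatic triples near the boundaries where $\log_\lambda n$ can land exactly on a break-point of $\chi$; this is handled by shrinking each $\chi$-colour-block by an $o(1)$ amount and absorbing the loss into the $o(1)$ in $T \to I(r-1)$, hence into the error term $O_r(N^{1/I(r-1)})$.
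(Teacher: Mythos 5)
Your second construction, after the self-correction, lands on essentially the same plan as the paper's proof: reserve a fresh colour for an initial multiplicatively-closed block of size roughly $N^{1/I(r-1)}$, and transfer a sum-free $(r-1)$-colouring of a real interval to $(N^{1/I(r-1)}, N]$ via the map $n \mapsto \log n / \log M$. Two issues, one cosmetic and one genuine.

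Cosmetic: in your first attempt you write ``fix $T > I(r-1)$'', but by definition of $I$ as an infimum, no sum-free $\chi$ exists on $[1,T]$ once $T > I(r-1)$; the inequality must be $T < I(r-1)$. Also, in the second attempt you give the bottom block colour $1$ rather than a brand-new $r$-th colour. That choice is what creates the ``interfaces'' worry about mixed solutions with $x,y$ small and $z$ large; if the bottom block gets a new colour that never appears in the range of $\chi$, mixed triples are automatically bichromatic and require no bookkeeping whatsoever, as in the paper.

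Genuine gap: to get the exponent $1/I(r-1)$ exactly you need a sum-free $(r-1)$-colouring of a half-open interval up to $I(r-1)$ itself, not of $[1,T]$ for some fixed $T < I(r-1)$. Each such $T$ yields a bound of shape $N^{1/T}\log N$, which exceeds $N^{1/I(r-1)}\log N$ by a \emph{polynomial} factor $N^{1/T - 1/I(r-1)}$, and letting $T \to I(r-1)$ along a sequence does not fix the exponent for any single $N$. Your suggestion of ``shrinking each colour-block by $o(1)$ and absorbing the loss into $O_r(N^{1/I(r-1)})$'' cannot absorb a polynomial loss, and it is not clear a priori that there exists a single colouring (or a controlled family of colourings with $T = I(r-1) - O(1/\log N)$) that does the job. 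The paper closes this gap with a compactness argument (its \Cref{lem:compact}): by Rado's compactness principle, a sum-free $(r-1)$-colouring of the half-open real interval $(1, I(r-1)]$ exists, and with that in hand your construction and solution count give exactly the claimed bound with $C_r = 1/I(r-1)$.
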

%--------------------------------------%

The exponent $1/I(r-1)$ in this theorem would be sharp if $I(r) = S(r)$ for all $r$.
While this is true for $1 \leq r \leq 3$, the general case remains open.
See \S\ref{sec:questions} for more details.

%----------------------------------------------------------%

\subsection{General multiplicative equations}

We say that an equation is \indef{partition regular} over a set $X$ if any finite colouring of $X$ contains a monochromatic solution of said equation. In the case where $X$ is the set of all positive integers, it is common convention to simply refer to an equation as \indef{partition regular}.
In his celebrated work, Rado~\cite{Rado1933-fe} fully characterised partition regularity for all systems of linear equations.
For example, Rado's criterion \cite[Satz IV]{Rado1933-fe} implies that, for integers $a_i\geq 1$, the equation
\begin{equation}
\label{eq:additive}
    a_1 x_1 + \dotsb + a_k x_k = y
\end{equation}
is partition regular if and only if at least one $a_i$ is equal to $1$.

For a given equation and a fixed number of colours $r$, one can also consider how many monochromatic solutions it is possible to find with respect to an arbitrary $r$-colouring. Given an integer polynomial $P$ in $k$ variables, let $\cN_r(P(x_1,\dotsc,x_k)=0, A)$ denote the minimum number of monochromatic integer solutions to the equation $P(x_1,\dotsc,x_k)=0$ over all colourings of a set $A$ with $r$ colours.
Frankl, Graham and Rödl~\cite{Frankl1988-zl} showed that if \eqref{eq:additive} is partition regular, then
\begin{equation*}
%\label{eq:ratio}
    \frac{\cN_r\p[\big]{a_1 x_1 + \dotsb + a_k x_k = 0, [N]}}{\cN_1\p[\big]{a_1 x_1 + \dotsb + a_k x_k = 0, [N]}} \geq \eps > 0
\end{equation*}
for large enough $N$.
In other words, a positive proportion of all available solutions to \eqref{eq:additive} are guaranteed to be monochromatic in any $r$-colouring of $[N]$.
The aforementioned results of Schoen and Robertson--Zeilberger demonstrate that as $N \to \infty$, we have
\begin{equation*}
    \frac{\cN_2\p[\big]{x + y = z, [N]}}{\cN_1\p[\big]{x + y = z, [N]}}
    = \frac{N^2/11 + O(N)}{N^2/2 + O(N)} = \frac{2}{11} + o(1).
\end{equation*}

Our methods allow us to estimate the number of monochromatic solutions to the multiplicative analogues of \eqref{eq:additive}, namely the equations
\begin{equation}
\label{eq:multiplicative}
    x_1^{a_1} \dotsm x_k^{a_k} = y,
\end{equation}
where $a_1,\dotsc,a_k\geq 1$. It follows from Rado's criterion~\cite[Satz IV]{Rado1933-fe} that this equation is partition regular over $\ZZ_{>1}$ if and only if at least one of the $a_i$ equals $1$.
As in our results concerning the Schur equation, the number of monochromatic solutions to \eqref{eq:multiplicative} is related to the corresponding analogue of the Schur numbers for equation \eqref{eq:additive}.
For $\bfa = (a_1, \dotsc, a_k)$, the \indef{Rado number} $R_\bfa(r)$ for the equation \eqref{eq:additive} is defined to be the minimum $N$ such that any $r$-colouring of $[N]$ has a monochromatic solution to \eqref{eq:additive}.

We call a solution $(x_1,\dotsc,x_k,y)$ to (\ref{eq:multiplicative}) \emph{degenerate} if $x_i=x_j$ holds for some $i\neq j$. When counting solutions to \eqref{eq:multiplicative}, one strategy could be to count the degenerate solutions obtained by setting $x_2 = \dotsb = x_k$.
In other words, solutions to the equation
\begin{equation}
\label{eq:degenerate}
    x_1 x_2^{b} = y,
\end{equation}
where $b = a_2 + \dotsb + a_k$.
In the additive setting, this strategy is necessarily wasteful.
Indeed, the number of solutions to \eqref{eq:additive} with $x_1, \dotsc, x_k, y \in [N]$ is of order $N^{s - 1}$, but setting $x_i = x_j$ for some $i < j$ results in an equation with at most $N^{s - 2}$ solutions.
However, it is possible for a positive proportion of all solutions to be degenerate in the multiplicative setting.
As we show in \Cref{prop:solution-count}, the number of solutions $x_1, \dotsc, x_k, y \in [N]$ to (\ref{eq:multiplicative}) is of order $N (\log N)^{m-1}$, where $m$ is the number of $i$ such that $a_i = 1$.
In particular, if $a_1=1$ and $a_2, \dotsc,  a_k > 1$, then a positive proportion of all solutions to $x_1^{a_1} \dotsm x_k^{a_k} = y$ come from \eqref{eq:degenerate}.

Our next main theorem is the following extension of \Cref{thm:lower-bound} which provides a lower bound for the number of monochromatic non-degenerate solutions to \eqref{eq:multiplicative}.

%--------------------------------------%
\begin{theorem}
\label{thm:lower-bound-a}
For any $k \geq 2$, let $\bfa = (a_1, \dotsc, a_k) \in \ZZ_{>0}^k$ and let $m$ be the number of $a_i$ that are equal to $1$.
If $m \geq 1$, then for each $r \geq 2$, any $r$-colouring of $\set{2, \dotsc, N}$ contains at least
\begin{equation*}
    c N^{1/R_\bfa(r-1)}
\end{equation*}
monochromatic non-degenerate solutions to $x_1^{a_1} \dotsm x_k^{a_k} = y$ for large enough $N$, where $c = c_\bfa(r) > 0$.
Moreover, the exponent $1/R_\bfa(r-1)$ is sharp for $2 \leq r \leq 3$.
\end{theorem}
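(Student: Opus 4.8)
The plan is to establish the lower bound by reducing to the powers-of-a-fixed-base construction, exactly as in the heuristic given in the introduction for Theorem~\ref{thm:lower-bound}, but now tracking degeneracy carefully. Fix an $r$-colouring $\chi$ of $\set{2, \dotsc, N}$. The key observation is that for any base $a \geq 2$, restricting $\chi$ to the geometric progression $\set{a, a^2, \dotsc, a^{R_{\bfa}(r-1)}}$ and reading exponents gives an $r$-colouring of the interval $\set{1, \dotsc, R_{\bfa}(r-1)}$ of integers; since $R_\bfa(r-1) \geq R_\bfa(r)$ would be false (we want few colours to force a solution), I instead run the argument so that at least one of the $r$ colour classes, when pulled back, contains a copy of $\set{1,\dotsc,R_\bfa(r-1)}$ restricted to $r-1$ colours — the trick being a pigeonhole/merging step to drop from $r$ to $r-1$ colours. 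Concretely: among the exponents $1, \dots, L$ (for suitable $L$), if two of them receive the same colour under the pulled-back colouring for a positive proportion of bases $a$, one merges; the point is that to guarantee a monochromatic solution to $x_1^{a_1}\cdots x_k^{a_k}=y$ inside a geometric progression one only needs a monochromatic solution to $a_1 t_1 + \dots + a_k t_k = s$ among the exponents, and the definition of $R_\bfa$ gives this once $L \geq R_\bfa(r')$ for the effective number of colours $r'$. Taking $L = R_\bfa(r-1)$ and $N$ large, each non-perfect-power base $a$ with $a^{R_\bfa(r-1)} \leq N$, i.e. $a \leq N^{1/R_\bfa(r-1)}$, yields at least one monochromatic solution, and these solutions are essentially distinct across distinct bases.

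The two technical points to handle are \emph{non-degeneracy} and \emph{distinctness of the counted solutions}. For non-degeneracy: a monochromatic solution to the exponent equation $a_1 t_1 + \dots + a_k t_k = s$ in $\set{1,\dots,L}$ need not have the $t_i$ distinct, but since $m \geq 1$ there is an index, say $a_1 = 1$, and one can always arrange the solution coming from the Rado/Schur-type guarantee to have $t_1$ strictly larger than all other $t_i$ (indeed, in the standard proof that \eqref{eq:additive} is partition regular when some $a_i=1$, the witnessing solution has $y = a_1 x_1 + \dots$ with $x_1$ free to be chosen large, or alternatively one rescales: if $(t_1,\dots,t_k,s)$ is a solution then so is $(\lambda t_1, \dots, \lambda t_k, \lambda s)$, and among a bounded family of solution-shapes at least one is non-degenerate since we may take distinct coordinates). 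This forces $a^{t_i}$ to be pairwise distinct, hence the resulting solution $x_i = a^{t_i}$, $y = a^s$ of \eqref{eq:multiplicative} is non-degenerate. For distinctness across bases: if $a \neq a'$ are both non-perfect-powers, then $\set{a, a^2, \dots}$ and $\set{a', a'^2, \dots}$ intersect only in $1$, so the solution tuples obtained are genuinely different; perfect powers $a = b^e$ are $O(\sqrt{N^{1/R_\bfa(r-1)}})$ in number and so contribute a negligible overcount. This gives at least $N^{1/R_\bfa(r-1)} - O(N^{1/(2R_\bfa(r-1))}) \geq c\, N^{1/R_\bfa(r-1)}$ monochromatic non-degenerate solutions.

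For the sharpness claim when $2 \leq r \leq 3$, the construction mirrors Theorem~\ref{thm:upper-bound} and Prendiville's examples: one colours $\set{2,\dots,N}$ using nested intervals at scales $N^{\alpha_0} < N^{\alpha_1} < \dots$ chosen so that a monochromatic solution to \eqref{eq:multiplicative} can only occur among the smallest scale, of size roughly $N^{1/R_\bfa(r-1)}$, and within that scale an optimal $(r-1)$-colouring of an interval of exponents of length just below $R_\bfa(r-1)$ avoids monochromatic solutions to the exponent equation entirely except for the unavoidable logarithmically-many forced ones; counting solutions fully contained in $\set{2, \dots, N^{1/R_\bfa(r-1)}}$ via \Cref{prop:solution-count} gives the matching upper bound $O(N^{1/R_\bfa(r-1)} (\log N)^{m-1})$, which has the same exponent. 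The main obstacle is the passage from $r$ colours to $r-1$ colours in the exponent equation: one must argue that the "wasted" colour — the one used for the large elements that can never be a small factor $x_i$ — genuinely reduces the effective colour count for the relevant sub-equation, and that the Rado number $R_\bfa(r-1)$ (rather than $R_\bfa(r)$) is the right quantity; this requires carefully isolating, for each base $a$, the sub-progression of exponents whose powers all lie below the threshold where a third (or $r$-th) colour could interfere, which is where the hypothesis $m \geq 1$ and the structure of \eqref{eq:degenerate} with $b = a_2 + \dots + a_k$ enter. I expect this colour-reduction bookkeeping, rather than any analytic estimate, to be the crux.
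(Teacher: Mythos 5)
The central step in your proposal---reducing from $r$ colours on the geometric progression $\set{a, a^2, \dotsc, a^{L}}$ to $r-1$ effective colours via a ``pigeonhole/merging step''---does not work as described, and this is precisely the hard part of the theorem. For a fixed colouring of $[2,N]$ and a fixed base $a$, the pulled-back colouring of the exponents $\set{1,\dotsc,L}$ may use all $r$ colours, and there is no merging or averaging over bases that can be performed: the adversary chooses the colouring first, and nothing forces a positive proportion of bases to exhibit a colour collision. As a result, taking $L = R_\bfa(r-1)$ does not let you invoke the Rado number: you would need $L = R_\bfa(r)$, which only recovers the weaker exponent $1/R_\bfa(r)$ discussed in the introduction. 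The gain from $R_\bfa(r)$ to $R_\bfa(r-1)$ is the entire content of the lower bound, and it requires a genuinely new idea.

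The paper's argument embeds a two-dimensional pattern $M_b = \set{2^{ji} b^i : 1 \leq i \leq S, 1 \leq j \leq W} \cup \set{2^j : 1 \leq j \leq W}$ with $S = R_\bfa(r-1)$, and applies van der Waerden's theorem to the induced colouring of $[W]$ by colour-patterns in $[r]^S$. This produces many columns $M_{b,j}$ with identical colour-patterns, and then one cases on whether the column $M_{b,j_0}$ uses all $r$ colours or not. If it uses at most $r-1$, the Rado number applies directly. If it uses all $r$, then the auxiliary row $M'$ of pure powers of $2$ must repeat a colour already present in $M_{b,j_0}$, and a solution is constructed using one factor from $M'$. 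This is the mechanism that replaces your merging step. Your treatment of non-degeneracy also has a gap for a similar reason: a monochromatic solution to the exponent equation in a single one-dimensional progression need not have distinct $t_i$, and one cannot simply rescale or rearrange within that progression; the paper uses the second coordinate (the monochromatic arithmetic progression of indices $j$) to perturb repeated coordinates into distinct ones while preserving colours. The distinctness-across-bases bookkeeping and the sharpness discussion for small $r$ are broadly in the spirit of the paper, but the core of your proposal---which you yourself flag as ``the crux''---is missing.
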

%--------------------------------------%

We also obtain an extension of \Cref{thm:upper-bound}.
Generalising the continuous Schur number $I(r)$, we define $I_\bfa(r)$ to be the infimum over all real $T>1$ such that every $r$-colouring of the closed real interval $[1,T] \subseteq \RR$ has a monochromatic solution to \eqref{eq:additive}.

%--------------------------------------%
\begin{theorem}
\label{thm:upper-bound-a}
For any $k \geq 2$, let $\bfa = (a_1, \dotsc, a_k) \in \ZZ_{>0}^k$ and let $m$ be the number of $a_i$ that are equal to $1$.
If $m \geq 1$, then for each $r \geq 2$, there exists an $r$-colouring of $\set{2, \dotsc, N}$ with
\begin{equation*}
    \p[\big]{C_{r, \bfa} + o(1)} N^{1/I_{\bfa}(r-1)} (\log N)^{m-1}
\end{equation*}
monochromatic solutions to $x_1^{a_1} \dotsm x_k^{a_k} = y$ as $N$ grows, for an explicit $C_{r, \bfa} > 0$.
\end{theorem}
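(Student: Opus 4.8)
The plan is to carry out the construction on the logarithmic scale. Writing $u(x)\defined\log x/\log N\in(0,1]$ for $x\in\set{2,\dotsc,N}$, a solution of $x_1^{a_1}\dotsm x_k^{a_k}=y$ satisfies $a_1 u(x_1)+\dotsb+a_k u(x_k)=u(y)$, so a colouring of $\set{2,\dotsc,N}$ with few monochromatic solutions corresponds, on the $u$-scale, to a colouring of $(0,1]$ with few monochromatic solutions of $a_1 t_1+\dotsb+a_k t_k=t_y$. We reserve one colour for an initial segment and distribute the other $r-1$ colours over the remainder according to a rescaled extremal colouring for the continuous Rado problem. Set $T\defined I_\bfa(r-1)$. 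To land on the exponent exactly $1/I_\bfa(r-1)$ rather than $1/(I_\bfa(r-1)-\eps)$, we first note — by a standard compactness argument — that the infimum defining $I_\bfa(r-1)$ is attained over half-open intervals: there is an $(r-1)$-colouring $\psi\from[1,T)\to\set{1,\dotsc,r-1}$ with no monochromatic solution of \eqref{eq:additive}. (For $T'<T$ the good colourings of $[1,T']$ form a nonempty compact subset of $\set{1,\dotsc,r-1}^{[1,T']}$, and $[1,T)=\bigcup_{T'<T}[1,T']$.)

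Given such a $\psi$, define the $r$-colouring $\chi$ of $\set{2,\dotsc,N}$ by $\chi(x)\defined r$ for $2\le x<N^{1/T}$ and $\chi(x)\defined\psi\p{Tu(x)}$ for $N^{1/T}\le x<N$ — here $Tu(x)\in[1,T)$ lies in the domain of $\psi$ — and put $\chi(N)\defined r$. (This last choice is immaterial: using $T=I_\bfa(r-1)\ge I_\bfa(1)=a_1+\dotsb+a_k$ one checks that no monochromatic solution involves $N$.) The core of the argument is the claim that every monochromatic solution of $x_1^{a_1}\dotsm x_k^{a_k}=y$ under $\chi$ has all of its variables in $[2,N^{1/T})$, hence uses colour $r$. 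Since $y$ is the largest variable, the alternative is $y\ge N^{1/T}$, and then all variables lie in $[N^{1/T},N)$ and share a common colour $i\in\set{1,\dotsc,r-1}$; putting $s_j\defined Tu(x_j)\in[1,T)$ and $s_y\defined Tu(y)=\sum_j a_j s_j<T$ (as $y<N$), the tuple $(s_1,\dotsc,s_k,s_y)$ is a solution of \eqref{eq:additive} in $[1,T)$ with $\psi(s_1)=\dotsb=\psi(s_k)=\psi(s_y)=i$, contradicting the choice of $\psi$. As $\psi$ avoids all solutions, repeated coordinates allowed, this also covers monochromatic degenerate solutions. Conversely, every solution with $y<N^{1/T}$ lies entirely in $[2,N^{1/T})$ and is monochromatic of colour $r$. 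Therefore the monochromatic solutions of $\chi$ are exactly the solutions of $x_1^{a_1}\dotsm x_k^{a_k}=y$ with $y<N^{1/T}$.

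It remains to count these. By \Cref{prop:solution-count} applied with $N$ replaced by $\floor{N^{1/T}}$ — in the sharp form furnished by its proof, or by a direct count of the solutions with $y\le M$ — their number is $\p{c_\bfa+o(1)}N^{1/T}\p{\log N^{1/T}}^{m-1}=\p{c_\bfa T^{1-m}+o(1)}N^{1/T}(\log N)^{m-1}$ for an explicit constant $c_\bfa>0$. This yields the theorem with $C_{r,\bfa}=c_\bfa\,I_\bfa(r-1)^{1-m}$; specialising to $\bfa=(1,1)$ recovers \Cref{thm:upper-bound}.

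The one genuine obstacle is the first step: to pin the exponent to exactly $1/I_\bfa(r-1)$ we must use a good colouring of the half-open interval $[1,I_\bfa(r-1))$, since a threshold $N^{1/T'}$ with $T'<I_\bfa(r-1)$ gives the strictly larger, hence forbidden, exponent $1/T'$, while $T'>I_\bfa(r-1)$ is impossible by definition of $I_\bfa$. The rest — the $O(1)$ boundary integers near $N^{1/T}$ and at $N$, and the precise constant $c_\bfa$ — is routine and is absorbed into the $o(1)$.
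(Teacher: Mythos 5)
Your proof is correct and follows essentially the same approach as the paper: reserve one colour for the initial segment $[2,N^{1/I_\bfa(r-1)})$, transfer an extremal $(r-1)$-colouring of the continuous interval via the logarithmic rescaling so that no monochromatic solution can live outside that initial segment, and then count solutions in the initial segment using \Cref{prop:solution-count}. The paper proves the needed continuous extremal colouring as \Cref{lem:compact-a} over $(1,I_\bfa(r-1)]$ rather than your $[1,I_\bfa(r-1))$, but these are interchangeable (as the paper itself remarks after \Cref{lem:compact}), and your slightly more careful treatment of the endpoint $N$ is a harmless variation.
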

%--------------------------------------%

\Cref{thm:lower-bound-a} and \Cref{thm:upper-bound-a} together imply that as $N \to \infty$, we have
\begin{equation*}
    \p[\big]{c - o(1)} N^{1/R_\bfa(r-1)}
    \leq \cN_r\p[big]{x_1^{a_1} \dotsm x_k^{a_k} = y, [2,N]}
    \leq \p[\big]{C + o(1)} N^{1/I_\bfa(r-1)} (\log N)^{m-1}.
\end{equation*}
Again, if $I_\bfa(r) = R_\bfa(r)$ then these bounds are tight up to a polylogarithmic factor.
In \Cref{prop:rado-two-colour}, we show that this is the case for $r = 1$ and $r = 2$.

%----------------------------------------------------------%

\subsection*{Notation}

Let $\NN$ denote the set of positive integers. For $a, b \in \NN$, we use the standard notation $a\mid b$ (respectively, $a\nmid b$) to denote the fact that $a$ divides $b$ (respectively, $a$ does not divide $b$). For positively valued functions $f$ and $g$, we write $f = O(g)$ if there exists a positive constant $C$ such that $f(x) \leq C g(x)$ for all $x$.
We use subscripts, such as in $f=O_\lambda(g)$, to indicate that the implicit constant $C$ may depend on the parameter $\lambda$.
We call a solution $(z_1, \dotsc, z_k)$ to an equation \indef{degenerate} if $z_i = z_j$ holds for some $i \neq j$, and \indef{non-degenerate} otherwise.
We make frequent use of the estimate $\sum_{n=1}^N 1/n = \log N + O(1)$ as $N \to \infty$.

%----------------------------------------------------------%

\subsection*{Structure}

The rest of the paper is organised follows.
The first three sections concern monochromatic solutions to $xy=z$.
We begin in \S\ref{secMonoProd} by establishing the general lower bound \Cref{thm:lower-bound}.
In \S\ref{sec:two-colour}, we focus on $2$-colourings and prove the sharp bound \Cref{thm:two-colours} and the stability result \Cref{thm:stability}.
In \S\ref{sec:constructions}, we construct colourings with few monochromatic solutions to $xy=z$ and prove the general upper bound \Cref{thm:upper-bound}.
In \S\ref{sec:multiplicative} we investigate more general multiplicative equations (\ref{eq:multiplicative}).
The main theorems of this section are the lower and upper bounds \Cref{thm:lower-bound-a} and \Cref{thm:upper-bound-a} respectively.
We conclude in \S\ref{sec:questions} by posing some open problems and conjectures.

%----------------------------------------------------------%
%----------------------------------------------------------%

\section{Monochromatic products}
\label{secMonoProd}

In this section, we consider monochromatic solutions to the equation $x y = z$ with respect to an arbitrary finite number of colours.
As we saw in the introduction, one could show that there are at least $N^{1/S(r)}$ monochromatic solutions by embedding sets of the form $\set{a, a^2, \dotsc, a^{S(r)}}$ into $[2,N]$.
The primary purpose of this section is to prove \Cref{thm:lower-bound}, which amounts to say that we can find roughly $N^{1/S(r-1)}$ monochromatic solutions.
Our gain is obtained by considering different patterns that can be embedded more efficiently.

It turns out that we can get away with an enormous pattern, as long as we can fit many of them inside $[2,N]$.
We use the following two-dimensional set
\begin{equation*}
    M_{a} \defined \set[\big]{ 2^{ji} a^i \st 1 \leq i \leq S,\; 1 \leq j \leq W } \cup \set[\big]{ 2^j \st 1 \leq j \leq W },
\end{equation*}
for some judicious choice of $S$ and $W$.
Writing $M_{a,j}$ for the geometric progression
\begin{equation*}
    M_{a,j} \defined \set[\big]{ (2^j a)^i \st 1 \leq i \leq S },
\end{equation*}
and $M' = \set{2^j \st 1 \leq j \leq W}$ we have $M_a = \bigcup_{1 \leq j \leq W} M_{a,j} \cup M'$.
Moreover, if $a > 1$ is odd, then this is a disjoint union.

The strategy to prove \Cref{thm:lower-bound} is as follows.
We first show that if $S$ and $W$ are large enough, then, for every $a$, any $r$-colouring of the set $M_a$ has a monochromatic non-degenerate solution to $x y = z$ with at most one element in $M'$.
Secondly, we observe that, by varying the value of $a$ in an appropriate set, we are able to embed multiple copies of $M_a$ in $[2,N]$ each of which gives rise to a distinct monochromatic solution of $x y = z$.

A key aspect of the choice of the set $M_a$ is that, putting aside the requirement for the solutions to be distinct, the number of choices of $a$ with $M_a \subseteq[2,N]$ is of order $N^{1/S}/2^W$, as we need $\max M_a = 2^{WS}a^S \leq N$.
Therefore, if we can choose $W$ and $S$ solely as functions of $r$, then no matter how large $W$ is, we have a lower bound of the shape $C_r N^{1/S}$.
To obtain the bound required by \Cref{thm:lower-bound}, we need to choose $S = S(r-1)$, the Schur number for $r-1$ colours.
The choice of $W$ is related to the van der Waerden numbers $W(k,r)$, the smallest value of $N$ such that any $r$-colouring of $[N]$ has a monochromatic arithmetic progression of length $k$.
The van der Waerden theorem~\cite{Van_der_Waerden1927-iu} guarantees that $W(k,r)$ is finite for all $k$ and $r$.

%--------------------------------------%
\begin{lemma}
\label{lem:ramsey-pattern-r}
For any $r \geq 2$, set $S = S(r-1)$ and $W = W\p[\big]{S!+1,r^S}$.
Then any $r$-colouring of the set $M_a$ contains a monochromatic non-degenerate solution to $x y = z$ where $z$ is a multiple of $a$.
\end{lemma}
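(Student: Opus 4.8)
The plan is to apply van der Waerden's theorem to reduce matters to a single $r$-colouring of $[S]$, and then to invoke the definition of the Schur number $S(r-1)$. Throughout I assume, as in the surrounding discussion, that $a>1$ is odd, so that $M_a=\bigcup_{1\leq j\leq W}M_{a,j}\cup M'$ is a disjoint union and an element of $M_a$ is divisible by $a$ exactly when it lies in some $M_{a,j}$. Writing $b_j\defined 2^j a$, the only solutions of $xy=z$ inside $M_a$ that I will use are the within-progression ones $b_j^{p}\cdot b_j^{q}=b_j^{p+q}$; the ``merge'' $b_{j_1}^{p}\cdot b_{j_2}^{p}=b_{(j_1+j_2)/2}^{2p}$, valid when $j_1+j_2$ is even; and the ``shift'' $2^{k}\cdot b_j^{p}=b_{j+k/p}^{p}$, valid when $p\mid k$. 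In a shift solution the three entries are pairwise distinct — $2^{k}$ has trivial odd part, while $b_j^{p}$ and $b_{j+k/p}^{p}$ share the odd part $a^{p}$ but have $2$-parts differing by the factor $2^{k}$ with $k\geq1$ — so every shift solution is automatically non-degenerate and has last entry divisible by $a$.

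First I would record, for each $j\in[W]$, the pattern $c_j\colon[S]\to[r]$ given by $c_j(p)=\chi(b_j^{p})$. There are at most $r^S$ such patterns and $W=W(S!+1,r^S)$, so van der Waerden's theorem yields an arithmetic progression $P=\set{j_0+\alpha d\st 0\leq\alpha\leq S!}\subseteq[W]$ along which $c_j$ is constant, equal to some $c\colon[S]\to[r]$; since $P\subseteq[W]$ we have $j_0+S!d\leq W$, so $2^{S!d}\in M'$. I would then split according to whether $([S],c)$ contains a monochromatic solution of $p+q=s$. If it does, say $c(p)=c(q)=c(s)$ with $p+q=s$, then when $p\neq q$ the within-progression solution $b_{j_0}^{p}\cdot b_{j_0}^{q}=b_{j_0}^{s}$ already works — it is monochromatic by the choice of $c$, has pairwise distinct entries because $p,q,s$ are distinct, and $b_{j_0}^{s}$ is divisible by $a$ — while when $p=q$, so $s=2p\leq S$, the merge solution $b_{j_0}^{p}\cdot b_{j_0+2d}^{p}=b_{j_0+d}^{2p}$ works instead, since $j_0,j_0+d,j_0+2d\in P$ (so the entries carry colours $c(p),c(p),c(2p)$), the first two differ because $d\geq1$, the last differs from them because its odd part $a^{2p}$ is not $a^{p}$, and it is divisible by $a$. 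This case needs only $S!\geq2$, i.e.\ $S=S(r-1)\geq S(1)=2$.

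In the remaining case $([S],c)$ has no monochromatic solution of $p+q=s$; then $c$ must be surjective, for otherwise it would be an $(r-1)$-colouring of $[S]=[S(r-1)]$ and the defining property of the Schur number would furnish one. Hence $\chi(2^{S!d})=c(i)$ for some $i\in[S]$. Set $\gamma=S!/i$, which is an integer in $\set{1,\dotsc,S!}$ since $i\mid S!$; then $j_0+\gamma d$ lies in $P$, and the shift solution $2^{S!d}\cdot b_{j_0}^{i}=b_{j_0+\gamma d}^{i}$ is the desired one: the identity holds because $\gamma d i=S!d$, it is monochromatic because $\chi(2^{S!d})=c(i)$ and $\chi(b_{j_0}^{i})=\chi(b_{j_0+\gamma d}^{i})=c(i)$, and it is non-degenerate with last entry divisible by $a$ by the remark above.

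The routine parts are the sub-case analysis and the indexing bookkeeping; the crux — and the reason the exponent improves from $1/S(r)$ to $1/S(r-1)$ — is this last case. The step I expect to take the most care is the observation that a Schur-triple-free $r$-colouring of $[S(r-1)]$ must be surjective, together with the realisation that this surjectivity lets a \emph{single} power of two, namely $2^{S!d}$, hit one of the colours $c(i)$. Ensuring that this element actually lies inside $M_a$ is precisely what dictates the length $S!+1$ of the van der Waerden progression (one needs every $i\leq S$ to divide the total displacement $S!d$), and is the one place where the precise shape of $M_a$ enters the argument.
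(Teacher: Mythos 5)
Your proof is correct and follows essentially the same route as the paper's: apply van der Waerden to the colour-pattern colouring of $[W]$ to extract a progression of length $S!+1$ along which all $M_{a,j}$ have the same pattern $c\colon[S]\to[r]$, then split on whether $([S],c)$ has a monochromatic Schur triple (using the paper's Case 1 within-progression/merge solutions) or is Schur-triple-free and hence surjective (using the paper's Case 2 ``shift'' solution with $2^{S!d}\in M'$). The only cosmetic difference is that you organise the case split directly around the existence of a Schur triple in $c$ rather than around the number of colours appearing in $M_{a,j_0}$ — logically equivalent given Schur's theorem for $[S(r-1)]$.
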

%--------------------------------------%
\begin{proof}
Given an $r$-colouring $c \from M_a \to [r]$, consider the auxiliary colouring $\xi \from [W] \to [r]^S$ obtained by colouring an integer $j$ by the colour pattern observed in $M_{a,j}$.
In other words, we define $\xi$ by
\begin{equation*}
    \xi(j) \defined \p[\big]{c(2^j a), c(2^{2j} a^2), \dotsc, c(2^{Sj}a^S)} \in [r]^S.
\end{equation*}
Now, since $W = W\p[\big]{S!+1,r^S}$, there is an arithmetic progression
\begin{equation*}
    \set[\big]{j_0, j_0 + d, j_0 + 2d, \dotsc, j_0 + S!d} \subseteq [W]
\end{equation*}
that is monochromatic in the colouring $\xi$.
That is to say, the sets $M_{a,j_0}, \dotsc, M_{a, j_0 + S!d}$ have the same colour patterns in the colouring $c$, so for every $1 \leq i \leq S$, we have
\begin{equation*}
    c((2^{j_0}a)^i) = c((2^{j_0 + d}a)^i) = \dotsb = c((2^{j_0 + S!d}a)^i).
\end{equation*}
We distinguish two cases, depending on how many colours appears in $M_{a,j_0}$.

\begin{description}[labelindent=\parindent, leftmargin=2\parindent]
\item[First case] no more than $r-1$ colours are used in $M_{a,j_0}$.
Then as $M_{a,j_0}$ is a geometric progression of length $S(r-1)$, there is a monochromatic solution to $x y = z$ in $M_{a,j_0}$.
This solution may be degenerate if we have $x = y = (2^{j_0}a)^{i}$ and $z = xy = (2^{j_0}a)^{2i}$.
In this case, we can find a non-degenerate one by taking
\begin{equation*}
    x' = (2^{j_0}a)^{i}; \qquad
    y' = (2^{j_0 + 2d}a)^{i}; \qquad
    z' = (2^{j_0 + d}a)^{2i}.
\end{equation*}
In any case, we have found a solution where $a \divides z$ and we have not used $M'$ at all.

\item[Second case] all $r$ colours are available in $M_{a,j_0}$.
Then for any element in $M'$, there must be an element in $M_{a,j_0}$ with the same colour.
In particular, we must have $c((2^{j_0} a)^i) = c(2^{S!d})$ for some $1 \leq i \leq S$.
We get a monochromatic non-degenerate solution to $x y = z$ where $a$ divides $z$ by setting
\begin{equation*}
    x = (2^{j_0} a)^{i}, \qquad
    y = 2^{S!d}, \qquad
    z = (2^{j_0 + (S!/i)d} a)^{i}.
\end{equation*}
\end{description}

In both cases, we are able to find the required solution.
\end{proof}
%--------------------------------------%

\begin{remark}
    We could have chosen $W = W\p[\big]{\lcm(1,2,\dotsc,S) + 1, r^S}$ instead of $W = W\p[\big]{S! + 1, r^S}$, which is an improvement since $\lcm(1, 2, \dotsc, n) = e^{n + o(n)}$ is much smaller than $n!$. This does not make a significant difference, however, given  the astronomical size of $W$ from the van der Waerden function alone. For more details on the growth of the van der Waerden function, see \cite[\S2.1]{Graham1996-fw}.
\end{remark}

We are now ready to prove \Cref{thm:lower-bound}. The proof that the exponent of $1/S(r-1)$ is sharp is postponed until \S\ref{sec:constructions} and is established by \Cref{lem:small-schur}.

%--------------------------------------%
\begin{proof}[Proof of Theorem \ref{thm:lower-bound}]

Choose $S = S(r-1)$ and $W = W(S! + 1, r^S)$.
We are going to find different copies of the set $M_a$ inside $[2,N]$, each of which will lead to a distinct monochromatic non-degenerate copy of $x y = z$.
As previously observed, to guarantee that $M_a \subseteq [2,N]$, we must choose $2 \leq a \leq N^{1/S}2^{-W}$.
To further enforce that we obtain distinct solutions from different copies of $M_a$, we restrict $a$ to the set
\begin{equation*}
    \cA \defined \set[\Big]{ 2 \leq a \leq \floor{N^{1/S}2^{-W}} \st 2 \ndivides a, \text{ $a$ is not a perfect power } }.
\end{equation*}
Each solution obtained by an application of \Cref{lem:ramsey-pattern-r} is guaranteed to have $a \divides z$, that is, $z = 2^{ij} a^i$ for some $1 \leq i \leq S$ and $1 \leq j \leq W$.
Given this value of $z$, we can uniquely recover the value of $a$.
Indeed, as $a$ is odd, we can repeatedly divide $z$ by $2$ to obtain a number of the form $z' = a^i$.
Since no $a \in \cA$ is a perfect power, we can retrieve $a$ from $a^i$ by looking at the smallest integer in the sequence $z', (z')^{1/2}, (z')^{1/3}, \dotsc$.

We have therefore found at least $\card{\cA}$ monochromatic non-degenerate solutions to $x y = z$. Only half of the values of $a \in [2, N^{1/S}2^{-W}]$ are even and a negligible proportion of them are perfect powers. Hence, $\card{\cA} \geq \p[\big]{2^{-W-1} - o(1)}N^{1/S}$ as claimed.
\end{proof}
%--------------------------------------%

%----------------------------------------------------------%
%----------------------------------------------------------%

\section{A sharp result for two colours}
\label{sec:two-colour}

As stated in the introduction, the colouring $[2,N] = R \sqcup B$ with $R = [2, (N/2)^{1/2}] \sqcup (N/2, N]$ and $B = [(N/2)^{1/2}, N/2]$ establishes that
\begin{equation*}
    \cN_2\p[\big]{x y = z , [2,N]} \leq  \frac{1}{2\sqrt{2}}N^{1/2}\log N + O(N^{1/2}).
\end{equation*}
The goal of this section is to prove \Cref{thm:two-colours}, which provides the corresponding lower bound, leading to
\begin{equation*}
    \cN_2\p[\big]{x y = z , [2,N]} = \p[\bigg]{\frac{1}{2 \sqrt{2}} - o(1)}N^{1/2} \log N.
\end{equation*}
In particular, this shows that the aforementioned $2$-colouring is essentially the one with the fewest monochromatic solutions to $xy=z$.

Given integers $a$, $\ell$ and $k$ we define the set
\begin{equation*}
    T_{a, \ell, k} \defined \set{\ell, k, \ell k, a, \ell a,  k a, \ell k a, \ell^2 a, \ell^2 k a}.
\end{equation*}
We begin with the following observation.

%--------------------------------------%
\begin{lemma}
\label{lem:ramsey-pattern-two}
For integers $a, \ell, k \geq 2$, consider a $2$-colouring $c \from T_{a, \ell, k} \to \set{\rR, \rB}$.
If $c(\ell) \neq c(k)$, then there exist $x, y, z \in T_{a, \ell, k}$ such that $c(x) = c(y) = c(z)$, $x y = z$ and $a$ divides $z$.
Furthermore, if $a \notin \set{l, k, lk}$, $k \notin \set{ la, l^2a}$, and $l \neq ka$, then we can choose $x,y,z$ to all be distinct.
\end{lemma}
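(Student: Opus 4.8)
The plan is to carry out a direct case analysis on the $2$-colouring, organised as a decision tree. Since interchanging the colours $\rR$ and $\rB$ preserves the relation $xy=z$, the divisibility condition $a\divides z$, and whether a triple is degenerate, and since $c(\ell)\neq c(k)$, we may assume without loss of generality that $c(\ell)=\rR$ and $c(k)=\rB$. (Note that $T_{a,\ell,k}$ is not symmetric in $\ell$ and $k$, so we cannot swap those two variables, but swapping the colours is harmless.) We then split into two cases according to the colour of $a$.

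In each case we reveal the colours of the remaining elements of $T_{a,\ell,k}$ one at a time, in a prescribed order, so that at each step either the newly revealed colour completes a monochromatic triple $xy=z$ with $a\divides z$ using two previously revealed elements, in which case we stop, or else that colour is forced and we continue. When $c(a)=\rR$, revealing $c(\ell a)$, $c(\ell k a)$, $c(\ell k)$, $c(\ell^2 k a)$ in this order suffices: $c(\ell a)=\rR$ closes via $\ell\cdot a=\ell a$; otherwise $c(\ell a)=\rB$ and then $c(\ell k a)=\rB$ closes via $k\cdot\ell a=\ell k a$; otherwise $c(\ell k a)=\rR$ and then $c(\ell k)=\rR$ closes via $\ell k\cdot a=\ell k a$; otherwise $c(\ell k)=\rB$, and then $c(\ell^2 k a)=\rB$ closes via $\ell a\cdot\ell k=\ell^2 k a$ while $c(\ell^2 k a)=\rR$ closes via $\ell\cdot\ell k a=\ell^2 k a$. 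When $c(a)=\rB$, the analogous (slightly longer) chain reveals $c(ka)$, $c(\ell k a)$, $c(\ell a)$, $c(\ell^2 a)$, $c(\ell^2 k a)$, $c(\ell k)$ and closes at the successive steps via $k\cdot a=ka$, $\ell\cdot ka=\ell k a$, $k\cdot\ell a=\ell k a$, $\ell\cdot\ell a=\ell^2 a$, $k\cdot\ell^2 a=\ell^2 k a$, and finally $\ell a\cdot\ell k=\ell^2 k a$ or $\ell k\cdot a=\ell k a$. In every branch the element $z$ lies in $\set{\ell a, ka, \ell k a, \ell^2 a, \ell^2 k a}$, hence $a\divides z$, which gives the first assertion; possible coincidences among the nine listed elements of $T_{a,\ell,k}$ play no role here, since the products used are algebraic identities.

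For the furthermore clause, I would collect the (essentially nine) triples $(x,y,z)$ produced by the case analysis and check, for each, which of $x\neq y$, $x\neq z$, $y\neq z$ genuinely needs an assumption. With $a,\ell,k\geq 2$ most are automatic; the remaining inequalities are $\ell\neq a$, $k\neq a$, $\ell k\neq a$, $k\neq\ell a$, $k\neq\ell^2 a$ and $\ell\neq ka$, coming respectively from the triples $\ell\cdot a=\ell a$, $k\cdot a=ka$, $\ell k\cdot a=\ell k a$, $k\cdot\ell a=\ell k a$, $k\cdot\ell^2 a=\ell^2 k a$ and $\ell\cdot ka=\ell k a$ (the condition $a\neq k$ also handles $\ell a\cdot\ell k=\ell^2 k a$). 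These six conditions are exactly the hypotheses $a\notin\set{\ell,k,\ell k}$, $k\notin\set{\ell a,\ell^2 a}$ and $\ell\neq ka$, so under these hypotheses each branch can be made to output a non-degenerate triple.

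The main obstacle is organisational rather than conceptual: one must choose the order in which colours are revealed so that the decision tree is genuinely exhaustive and every leaf yields a valid monochromatic triple with $a\divides z$, and then confirm that the distinctness conditions forced by the triples actually used coincide with, and are no more than, the three hypotheses in the statement. There is no deeper idea — the content lies in verifying these finitely many cases correctly.
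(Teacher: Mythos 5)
Your proposal is correct and follows essentially the same route as the paper: fix $c(\ell)=\rR$, $c(k)=\rB$ by colour symmetry, split on $c(a)$, and chase forced colours until a monochromatic product $xy=z$ with $a\divides z$ closes the argument, then observe that $x<z$ and $y<z$ always, so the only degeneracies are $x=y$, which are blocked exactly by the stated hypotheses. The minor reordering of reveals in the $c(a)=\rR$ branch is immaterial, and your inventory of the nine triples and the six associated distinctness conditions matches the paper's.
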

%--------------------------------------%
\begin{proof}
Without loss of generality, we may assume that $c(\ell) = \rR$ and $c(k) = \rB$.
Suppose for a contradiction that there is no monochromatic solution to $x y = z$ in $T_{a, k, \ell}$ with $a\mid z$.
We split the proof into two cases according to the colour of $a$.

Firstly, suppose that $c(a) = \rR = c(\ell)$.
Our assumption that there are no monochromatic products leads to the sequence of implications
\begin{equation*}
    c(a)= \rR
    \implies c(\ell a) = \rB
    \implies c(\ell k a) = \rR
    \implies c(\ell^2 k a) = \rB.
\end{equation*}
We therefore obtain a monochromatic solution of the kind $\ell a \cdot \ell k = \ell^2 k a$ or $a \cdot \ell k = \ell k a$ depending on the colour of $\ell k$, thereby delivering a contradiction. If $a\notin\{l,lk\}$ and $k\neq la$ and we instead began by assuming that there are no non-degenerate  monochromatic $xy=z$ with $a\mid z$, then this argument also leads to a contradiction.

Now consider the case where $c(a) = \rB = c(k)$.
Similarly, we have
\begin{align*}
    c(a) = \rB
    &\implies c(k a) = \rR
    \implies c(\ell k a) = \rB
    \implies c(\ell a) = \rR \\
    &\implies c(\ell^2 a) = \rB
    \implies c(\ell^2 k a) = \rR.
\end{align*}
We therefore obtain a monochromatic solution $\ell a \cdot \ell k = \ell^2 k a$ or $a \cdot \ell k = \ell k a$ depending on the colour of $\ell k$, yielding a contradiction. Similarly to the previous case, if $a\notin\{k,lk\}$, $l\neq ka$, and $k\notin\{la,l^2a\}$, then the assumption that there are no non-degenerate monochromatic solutions also leads to a contradiction.
This completes the proof.
\end{proof}
%--------------------------------------%

Choosing various values of $a$ and embedding several copies of $T_{a,2,k}$ into $[2,N]$, we can deduce a stability result.
Indeed, we prove \Cref{thm:stability}, which states that if a two-colouring of $[2,N]$ has few monochromatic solutions to $x y = z$, then there must be a long monochromatic initial segment.

%--------------------------------------%
\begin{proof}[Proof of \Cref{thm:stability}]
Consider a $2$-colouring $c \from [2, N] \to \set{\rR, \rB}$.
Let $2 < k \leq N$ be minimal such that $c(2) \neq c(k)$.
If no such $k$ exists, then $c$ is monochromatic and we are done.
It suffices to find more than $N / 16k$ monochromatic non-degenerate solutions to $x y = z$.
Indeed, we would then have $N/16k < M$, which gives $k - 1 \geq \floor{N/16M}$, implying the required conclusion that the interval $\set{2, \dotsc, \floor{N/16M}}$ is monochromatic.

The monochromatic interval $[2,k-1]$ furnishes $k\log k + O(k)$ monochromatic solutions to $xy=z$. Since we are taking $N$ sufficiently large, we may henceforth assume $k < N^{1/2}/\loglog N$.

By \Cref{lem:ramsey-pattern-two}, for any integer $a \in [2, N]$ satisfying $4ak \leq N$, there is a monochromatic solution of $x y = z$ in the set
\begin{equation*}
    T_a \defined T_{a, 2, k} = \set{2, k, 2 k, a, 2 a,  k a, 2 k a, 4 a, 4 k a} \subseteq [2,N].
\end{equation*}
Assume initially that $k \neq 4$.
To ensure that the solutions each $T_a$ provide are different and non-degenerate, we restrict ourselves to values of $a$ in the set
\begin{equation*}
    \cA \defined \set[\Big]{a \in [2,N/4k] \st 2 \ndivides a, \; k \ndivides 4a }.
\end{equation*}
For each $a \in \cA$, we use \Cref{lem:ramsey-pattern-two} to find a monochromatic non-degenerate solution to $x y = z$ with $a \divides z$.
We claim that all these solutions are different.
Indeed, given $z$ in one such solution, we can recover the unique value of $a \in \cA$ with $z \in T_a$.
We do so by noting that either $z \in \set{ka, 2ka, 4ka}$ or $z \in \set{a, 2a, 4a}$.
In the former case, we divide $z$ by $k$ exactly once. Since $k$ does not divide $4a$, $2a$ or $a$, this leads to the conclusion that $z/k \in \set{a, 2a, 4a}$.
Having obtained $z$ or $z/k$ in $\set{a, 2a, 4a}$, since $a$ is odd, we can now retrieve $a$ by repeatedly dividing by $2$.

This gives us at least $\card{\cA}$ monochromatic non-degenerate solutions to $x y = z$.
It remains to show that $\card{\cA} > N/ 16k$.
Let $h=k/\gcd(4,k)$.
Note that if $k \divides 4a$, then $h \divides a$, therefore
\begin{equation*}
    \card{\cA}
    \geq \card[\big]{\set[\big]{a \in [2, N/4k] \st 2 \ndivides a, \; h \ndivides a }}.
\end{equation*}
Since $k \notin \set{1,2,4}$, we have $h \geq 2$. If $h$ is a power of $2$, then $\cA$ consists of all the odd numbers in $[2, N/4k]$.
Since $k < N^{1/2}/\loglog N$ and $N$ is sufficiently large, we have
\begin{equation*}
    \card{\cA} \geq \frac{N}{8k} - 1 > \frac{N}{16k}.
\end{equation*}
If $h$ is not a power of $2$, then $h \geq 3$ and $\cA$ is precisely all the odd numbers in $[2, N/4k]$ which are not congruent to $h$ modulo $2h$.
Again using the fact that $h \leq k < N^{1/2}/\loglog N$, for large enough $N$ this implies that
\begin{equation*}
    \card{\cA} \geq \frac{N}{8k} \cdot \left(\frac{h-1}{h}\right) -O\p[\bigg]{\frac{N^{1/2}}{\loglog N}} > \frac{N}{16 k}.
\end{equation*}

If $k = 4$, the sets $T_a$ are of the form $\set{2, 4, 8, a, 2 a, 4 a, 8 a, 16 a}$, which means that a solution could be counted in $T_a$ and $T_{2a}$.
This time, we take $a$ in the set
\begin{equation*}
    \cA' \defined \set[\Big]{a \in [2, N/16] \st 2 \ndivides a}.
\end{equation*}
By repeated division by $2$, we can recover $a$ from $z$ as before.
By applying \Cref{lem:ramsey-pattern-two}, each $T_a$ with $a \in \cA'$ gives a distinct monochromatic non-degenerate solution to $x y = z$.
But then we have $\card{\cA'} \geq N/16 - 3/2 \geq N/8k$ for large $N$, and we are done.
\end{proof}
%--------------------------------------%

The constant $1/16$ in \Cref{thm:stability} is not the best possible we could obtain.
With a careful analysis in the case $h = 3$ above, we could replace $N/16k$ by $N/12k + O(1)$.
Moreover, if we assumed that $k\to\infty$ as $N\to\infty$, we could have replaced $N/16k$ by $(1 + o(1)) N/8k$.
As this value is not important in the proof of \Cref{thm:two-colours}, we did not optimise for it.

While we have this stability result for two colours, we do not obtain a similar statement for three colours.
See \S\ref{sec:questions} for some further questions and comments about stability results in the more general setting.

Before we prove \Cref{thm:two-colours}, we first record two results which are required for the proof.
The first is the classical \indef{divisor bound}, which asserts that $d(n) = n^{o(1)}$, where $d(n)$ is the number of positive integers which divide $n$.
More refined estimates are available, but the following is sufficient for our purposes.

%--------------------------------------%
\begin{proposition}[Divisor bound]
\label{prop:divisor-bound}
For every $\eps > 0$ and large enough $n \geq n_0(\eps)$, we  have $d(n) \leq n^\eps$.
\end{proposition}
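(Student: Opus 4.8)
The plan is to exploit the multiplicativity of $d$ and compare $d(n)$ with $n^{\eps}$ one prime at a time. Writing the prime factorisation $n = p_1^{a_1} \dotsm p_k^{a_k}$, we have $d(n) = \prod_{i=1}^{k} (a_i + 1)$, and hence
\[
    \frac{d(n)}{n^{\eps}} = \prod_{i=1}^{k} \frac{a_i + 1}{p_i^{\eps a_i}}.
\]
The point is that only boundedly many of these factors can exceed $1$, and each of those that does is bounded by a constant depending on $\eps$ alone.

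First I would dispose of the large primes: if $p_i \geq 2^{1/\eps}$, then $p_i^{\eps a_i} \geq 2^{a_i} \geq a_i + 1$, so the corresponding factor is at most $1$. Only the primes $p_i < 2^{1/\eps}$ remain, of which there are at most $\pi(2^{1/\eps})$. For each such prime I would use the crude bound $p_i^{\eps a_i} \geq 2^{\eps a_i}$ together with the observation that the function $t \mapsto (t+1)\,2^{-\eps t}$ on $[0, \infty)$ is continuous and tends to $0$, hence is bounded above by some $C(\eps)$ depending only on $\eps$. Multiplying the at most $\pi(2^{1/\eps})$ surviving factors yields $d(n) \leq C'(\eps)\, n^{\eps}$, where $C'(\eps) \defined C(\eps)^{\pi(2^{1/\eps})}$ depends on $\eps$ alone.

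Finally, to upgrade this to the clean bound $d(n) \leq n^{\eps}$, I would apply the previous inequality with $\eps/2$ in place of $\eps$, obtaining $d(n) \leq C'(\eps/2)\, n^{\eps/2}$; then for every $n \geq n_0(\eps)$ large enough that $n^{\eps/2} \geq C'(\eps/2)$ we conclude $d(n) \leq n^{\eps}$. I do not anticipate any genuine obstacle here, as this is a textbook estimate; the only point requiring mild care is to arrange the argument so that the implied constant depends on $\eps$ only and not on $n$, which is exactly what the split of the prime factors into those above and below $2^{1/\eps}$ achieves.
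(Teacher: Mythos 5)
Your proof is correct and is the standard textbook argument for the divisor bound: use multiplicativity to write $d(n)/n^{\eps}$ as a product over prime powers, observe that primes $p\geq 2^{1/\eps}$ contribute factors at most $1$, and bound the finitely many remaining factors by a constant depending only on $\eps$, then absorb that constant by halving $\eps$. The paper does not prove this proposition at all---it is stated as a classical fact and used as a black box---so there is no in-paper argument to compare against; your write-up supplies a complete and correct proof of exactly the asserted form.
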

%--------------------------------------%

The second result we require is significantly deeper.
It concerns the sizes of sum sets $A+A \defined \set{a + a' \st a,a' \in A}$ and product sets $A \cdot A \defined \set{ a a' \st a, a' \in A}$ for finite sets of integers $A$.

%--------------------------------------%
\begin{theorem}[Sum-product bound]
\label{thm:sumproduct}
There is $\delta > 0$ such that every finite set of real numbers $A$ with $|A|\geq 2$ satisfies
\begin{equation*}
    \max \set[\big]{\card{A+A}, \card{A \cdot A}} \geq \card{A}^{1 + \delta}.
\end{equation*}
\end{theorem}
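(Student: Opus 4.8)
The plan is to derive this from the Szemer\'edi--Trotter incidence theorem via Elekes's argument; the statement is a (weak) form of the Erd\H{o}s--Szemer\'edi sum-product theorem, so alternatively one may simply cite any of the known polynomial sum-product bounds over $\RR$. Since we only need the existence of \emph{some} absolute $\delta>0$, it suffices to establish $\max\set[\big]{\card{A+A},\card{A\cdot A}}\geq c\,\card{A}^{5/4}$ for an absolute constant $c>0$: one then picks $\delta<1/4$ small enough that this dominates $\card A^{1+\delta}$ for all large $\card A$, and uses the trivial bound $\card{A+A}\geq 2\card A-1$ to handle the bounded range of cardinalities where the exponent $5/4$ does not yet suffice (shrinking $\delta$ further if needed).

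The core of the argument goes as follows. Write $n=\card A$ and assume $0\notin A$, after discarding $0$ if present (this perturbs $n$, $\card{A+A}$ and $\card{A\cdot A}$ by at most one each, which is harmless). Consider the family of $n^2$ lines $\ell_{a,b}=\set[\big]{(x,y)\in\RR^2\st y=b(x-a)}$ indexed by $(a,b)\in A\times A$; these are pairwise distinct, since $\ell_{a,b}$ has slope $b\neq 0$ and $x$-intercept $a$. Consider also the point set $P=(A+A)\times(A\cdot A)$, which has $\card P=\card{A+A}\cdot\card{A\cdot A}$ points. For every $c\in A$ the point $(a+c,\,bc)$ lies on $\ell_{a,b}$ and lies in $P$, so each of the $n^2$ lines is incident to at least $n$ points of $P$; hence the number of point-line incidences is at least $n^3$. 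On the other hand, Szemer\'edi--Trotter bounds the number of incidences between $m$ points and $\ell$ lines in the plane by $O\p[\big]{(m\ell)^{2/3}+m+\ell}$. Taking $m=\card P$ and $\ell=n^2$ and comparing with the lower bound $n^3$, a short case analysis on which of the three terms dominates forces $\card P\gg n^{5/2}$ once $n$ is large, whence $\max\set[\big]{\card{A+A},\card{A\cdot A}}\geq\card P^{1/2}\gg n^{5/4}$, as required.

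The only genuinely nontrivial ingredient is the Szemer\'edi--Trotter incidence bound itself (provable, e.g., via the crossing-number inequality or a cell decomposition); this is where I expect the real work to sit, and it is also why no comparably clean elementary proof with a polynomial saving is available. Everything else --- the reduction to large $n$, the removal of $0$, the distinctness of the lines, and the final optimisation over which term dominates --- is routine bookkeeping. Since the paper needs nothing beyond the qualitative statement, importing Szemer\'edi--Trotter (or directly quoting a sum-product estimate from the literature) is the natural route.
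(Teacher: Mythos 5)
The paper does not prove this result at all: it is stated as a black-box import from the sum-product literature, with citations to Erd\H{o}s--Szemer\'edi for the original integer-case theorem and to Solymosi and Rudnev--Stevens for the strongest known exponents over $\RR$, together with the remark that any $\delta>0$ suffices. Your proposal is therefore a genuinely different route: you reproduce Elekes's incidence-theoretic argument, which yields the explicit bound $\max\set{\card{A+A},\card{A\cdot A}}\gg\card A^{5/4}$ via Szemer\'edi--Trotter, and it is correct. After discarding $0$, the $n^2$ lines $y=b(x-a)$ with $(a,b)\in A\times A$ are pairwise distinct, each carries at least $n$ points of $(A+A)\times(A\cdot A)$, and comparing the resulting $n^3$ incidences against the Szemer\'edi--Trotter bound $O\p[\big]{(m\ell)^{2/3}+m+\ell}$ forces $\card{A+A}\cdot\card{A\cdot A}\gg n^{5/2}$; the bounded range of $n$ where the exponent $5/4$ has not yet overtaken $1+\delta$ is absorbed by the trivial bound $\card{A+A}\geq 2\card A-1$ after shrinking $\delta$. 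One small wording slip: deleting $0$ from $A$ can shrink $\card{A+A}$ and $\card{A\cdot A}$ by more than one element each, not ``at most one''; this is harmless because you only need that neither grows while $\card A$ drops by exactly one. As for what each approach buys: citing the literature gives the sharpest known $\delta$ (currently $1/3+2/1167$) at no cost of space, whereas Elekes's argument is self-contained modulo Szemer\'edi--Trotter but yields only $\delta<1/4$. Since the proof of the two-colour theorem uses only that some fixed absolute $\delta>0$ works, either route is equally serviceable.
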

%--------------------------------------%

Erd\H{o}s and Szemerédi~\cite{Erdos1983-lo} were the first to prove such a result for sets of integers, for a very small and inexplicit value of $\delta > 0$.
They conjectured that, provided $\card{A}$ is sufficiently large in terms of $\delta$, this result should hold for any $0 < \delta < 1$.
Building upon the approach of Solymosi~\cite{Solymosi2005-pq}, the current record of $0 < \delta < 1/3 + 2/1167$ is due to Rudnev and Stevens~\cite{Rudnev2022-fj}.
The weaker estimate of $\delta > 0$ will suffice in our application of \Cref{thm:sumproduct}.
We refer the interested reader to the introduction of \cite{Rudnev2022-fj} for details on the history of the Erd\H{o}s-Szemerédi sum-product conjecture.

We are now ready to prove \Cref{thm:two-colours}.

%--------------------------------------%
\begin{proof}[Proof of \Cref{thm:two-colours}]
Let $c \from [2,N] \to \set{\rR, \rB}$ be a colouring.
Write $M_\rR$ and $M_\rB$ respectively for the number of non-degenerate monochromatic solutions to $x y = z$ in colours $\rR$ and $\rB$.
The goal is to show that $\max\set{M_\rR, M_\rB}$ is at least the quantity in \eqref{eq:main-lb}.
This is definitely the case if $c$ is a constant colouring.
Assume then that $c(2) = \rR$ and that $k$ is the minimum $2 < k \leq N$ with $c(k) = \rB$.
In particular, the interval $[2,k-1]$ is monochromatic in $\rR$, thus we are done unless $k < (N/2)^{1/2}$.

From \Cref{thm:stability}, we have $M_\rB + M_\rR \geq N/16k$.
Therefore, if  $ k \leq N^{1/2}/ 32 \log N$, then $\max\set{M_\rR, M_\rB} \geq N/32k \geq N^{1/2} \log N$.
In other words, we are done unless
\begin{equation*}
    \frac{N^{1/2}}{32 \log N} < k < (N/2)^{1/2}.
\end{equation*}

Motivated by the fact that in the extremal colouring, all monochromatic solutions lie in the initial interval $[2,(N/2)^{1/2}]$, we will try to find many solutions using pairs $(i,j) \in [2,k-1]^2$ with $ij \leq (N/2)^{1/2}$.
For each such pair $(i,j)$, we find a solution if $c(ij) = \rR$ as we have $c(i) = c(j) = \rR$.
For technical reasons, we restrict the pairs
\begin{equation*}
    \cV \defined \set[\Big]{ (i,j) \in [2,k-1]^2 \st  32 \log N \leq  i < k/2 \;,\; i j \leq (N/2)^{1/2}}.
\end{equation*}
Note that we have plenty of pairs in $\cV$.
Indeed, we have
\begin{align*}
    \card{\cV}
    &\geq \sum_{i = \ceil{32 \log N}}^{\floor{k/2}} \p[\bigg]{\floor[\Big]{\frac{(N/2)^{1/2}}{i}} - 1 }
    = (N/2)^{1/2} \sum_{i = \ceil{32 \log N}}^{\floor{k/2}} \frac{1}{i} + O(k) \\
    &= \frac{1}{2 \sqrt{2}} N^{1/2} \log N + O\p[\big]{N^{1/2} \loglog N},
\end{align*}
as $N \to \infty$.
Our goal now is to show that either $M_\rB \geq N^{1/2 + \eps}$ or $M_\rR \geq \p[\big]{1 - o(1)}\card{\cV}$.

Define the equivalence relation $\sim$ on $\cV$ such that $(i,j) \sim (i', j')$ whenever $j' = j$ and $i' = 2^t i$ for some $t \in \ZZ$.
Write $\cC = \cV / \sim$ for the set of equivalence classes and note that each $C \in \cC$ can be written in the form
\begin{equation}
\label{eq:class}
    C = \set[\big]{(i,j), (2i,j), \dotsc, (2^{\card{C}-1} i, j)}.
\end{equation}
For each equivalence class as above, consider the following set of $\card{C} + 1$ numbers
\begin{equation*}
    ij, \quad 2ij, \quad 2^2 ij, \quad \dotsb \quad 2^{\card{C}-1}ij, \quad 2^{\card{C}}ij,
\end{equation*}
and note that for each of these numbers that are coloured in $\rR$, we have a solution of the form $(x,y,z)$ with $x = 2^{t}i$, $y = j$ and $z = 2^{t}ij$.
Indeed, as $(2^{\card{C}-1}i,j) \in \cV$, we have $2^{\card{C}}i < k$ and $j < k$, so $c(j) = c(2^{t} i) = \rR$ for all $0 \leq t \leq \card{C}$.

An equivalence class $C \in \cC$ as in \eqref{eq:class} is said to be \indef{abundant} if there is at most one $0 \leq t \leq \card{C}$ with $c(2^t i j) =\rB$.
Each such class contributes at least $\card{C}$ solutions in colour $\rR$, which are moreover distinct, since they all have $(x,y,z) = (i,j,ij)$ for some $(i,j) \in [2,k-1]^2$ with $ij \leq (N/2)^{1/2}$.
Thus, writing $\cA$ for the set of abundant equivalence classes, we have
\begin{align}
\label{eq:equivalence-count}
    M_\rR \geq \sum_{C \in \cA} \card{C} = \card{\cV} - \sum_{C \in \cC \setminus \cA} \card{C}.
\end{align}
Therefore, we are done if $\sum_{C \in \cC \setminus \cA} \card{C} = o(\card{\cV})$.
Notice that a class $C \in \cC$ is abundant if it does not intersect any set of pairs of the form
\begin{equation*}
    \cV_h \defined \set[\Big]{ (i,j) \in \cV \st  (2^h i, j) \in \cV \;,\; c(ij) = c(2^{h+1} ij) = \rB },
\end{equation*}
for $h \geq 0$.
Note also that $\card{\cV_h} = \emptyset$ if $h +1\geq \log N$ and that $\card{C} \leq \log N$ for all $C \in \cC$.
Therefore, we have
\begin{equation*}
    \sum_{C \in \cC \setminus \cA} \card{C}
    \leq \card{\cC\setminus \cA} \log N
    \leq \log N \sum_{h \geq 0} \card{\cV_h}
    \leq (\log N)^2 \max_{h \geq 0} \card{\cV_h}.
\end{equation*}
With some foresight, we pick some $0 < \eps < 1/2$ such that $(1 + \delta)(1/2 - \eps) = 1/2 + \gamma$, for some $\gamma > 0$, where $0 < \delta < 1$ is admissible in \Cref{thm:sumproduct}.
For concreteness, we can take $\delta = 1/3$ and $\eps = 1/12$, which would then give $\gamma = 1/18$.
Thus, from \eqref{eq:equivalence-count} we have $M_\rR \geq (1 - o(1))\card{\cV}$ if we have $\card{\cV_h} \leq N^{1/2 - \eps}$ for all $h \geq 0$.

Finally, suppose $\card{\cV_h} \geq N^{1/2 - \eps}$ for some $h \geq 0$. We use this assumption to show that $M_\rR + M_\rB$ must be rather large.
Consider the set $A_h \defined \set[\big]{ ij \st (i,j) \in \cV_h }$.
Each element $a \in A_h$ can be represented in no more than $d(a)$ ways as $a = ij$.
By the divisor bound, we have $d(a) \leq N^{o(1)}$, so, in particular, we have $N^{1/2 - \eps - o(1)}\leq \card{A_h} \leq (N/2)^{1/2}$.
Our choice of parameters therefore implies that $N^{1/2} \leq \card{A_h}^{1+\delta +o(1)}N^{-\gamma} \leq \card{A_h}^{1+\delta-\gamma + o(1)}$.
Since $A_h\subseteq[2,(N/2)^{1/2}]$, we have $\card{A_h + A_h} \leq 2 N^{1/2} \leq \card{A_h}^{1 + \delta - \gamma + o(1)}$.
By the sum-product bound (Theorem \ref{thm:sumproduct}), we infer that $\card{A_h \cdot A_h} \geq \card{A_h}^{1 + \delta} \geq N^{1/2 + \gamma - o(1)}$.

We now demonstrate that each element in $A_h \cdot A_h$ leads to a monochromatic solution.
Let $w \in A_h \cdot A_h$, where $w = i j \cdot i' j'$ for some $(i, j), (i', j') \in \cV_h$.
This implies, for instance, that $c(i) = c(j) = c(i') = c(j') = c(2^h i) = c(2^h i') = \rR$ and that $c(ij) = c(i'j') = c(2^{h+1}ij) = c(2^{h+1}i'j') = \rB$.
Moreover, we have that $(2^h i, j), (i', j') \in \cV$, so $2^h ij ,\; i'j' \leq (N/2)^{1/2}$, and thus $2^{h+1}iji'j' \leq N$.
Finally, note that we also have $c(2^{h+1}) = \rR$ as $2^{h+1} < k$ follows from the fact that $(N/2)^{1/2} \geq 2^h i j \geq 2^{h+1} \cdot 32 \log N$.
Now if $c(i j i'j') = \rB$, then we procure a solution in $\rB$ of the form
\begin{equation*}
    (x,y,z) = (i j, \; i'j',\; i j \cdot i'j').
\end{equation*}
Similarly, if $c(2^{h+1} i j i'j') = \rB$, then we obtain a solution in $\rB$ of the form
\begin{equation*}
    (x,y,z) = (2^{h+1} i j, \; i'j',\; 2^{h+1} i j \cdot i'j').
\end{equation*}
Finally, if $c(ij i'j') = c(2^{h+1} i j i'j') = \rR$, then we find a solution in $\rR$ of the form
\begin{equation*}
    (x,y,z) = (2^{h+1}, \;  i j i'j',\; 2^{h+1} \cdot i j i'j').
\end{equation*}
In any case, we always obtain a solution with $z = w$ or $z = 2^{h+1}w$.
Since a solution can only be counted at most twice in this manner, we deduce that $M_\rR + M_\rB \geq \card{A_h \cdot A_h}/2$.

We therefore conclude that
\begin{equation*}
     \max\set{M_\rR, M_\rB} \geq \card{A_h \cdot A_h}/4 = N^{1/2 + \gamma - o(1)},
\end{equation*}
and we are done.
\end{proof}
%--------------------------------------%

It would be interesting to see whether the ideas presented in the proof of \Cref{thm:two-colours} can be extended to more than $2$ colours or applied to other equations.
See \S\ref{sec:questions} for related open problems.

%----------------------------------------------------------%
%----------------------------------------------------------%

\section{Colourings with few monochromatic products}
\label{sec:constructions}

In this section, we provide constructions of colourings of $\set{2, \dotsc, N}$ with relatively few monochromatic solutions to $xy=z$. The main result of this section is the upper bound \Cref{thm:upper-bound}.

Recall that $\cN_r(xy=z, A)$ denotes the minimum number of monochromatic integer solutions to $xy=z$ over all $r$-colourings of the set $A$.
If we write $d(n)$ the number of divisors of $n$, we have the standard estimate
\begin{equation*}
    \sum_{n=1}^{N} d(n) = N \log N + O(N),
\end{equation*}
which in turn implies that
\begin{equation*}
    \cN_1(xy=z, [2,N])
    = \cN_1(xy=z, [N]) - N
    = N\log N + O(N).
\end{equation*}
The same bound also applies for the number of non-degenerate solutions, since the equation $x \cdot x = z$ has only $\floor{N^{1/2}}$ solutions over $[N]$.

As with the examples of Prendiville mentioned in the introduction, most of our constructions will be interval colourings.
Recall Prendiville's $2$-colouring $[2,N] = R \sqcup B$, where $R = [2, N^{1/2}]$ and $B = (N^{1/2}, N]$. We depict this graphically as:
%--------------------------------------%
\begin{equation}
\label{col:prendiville-2}
\begin{tikzpicture}[baseline=(current  bounding  box.center), scale=6.5]
\clip (-0.1,0.04) rectangle (0.4+0.1,-0.11);
\foreach \x/\xtext in
{0/$2$, 1/$N^{\frac{1}{2}}$, 2/$N$}
\draw (\x*0.2,-0.02) node[below] {\strut \xtext};
\ccint{0}{0.2}{Red};
\ocint{0.2}{0.4}{Cerulean};
\end{tikzpicture}
\end{equation}
%--------------------------------------%
In this colouring, the only solutions to $x y = z$ are red and lie in the interval $[2, N^{1/2}]$.
By the computation above, there are $(1/2) N^{1/2} \log N + O\p[\big]{N^{1/2}}$ monochromatic solutions in the colouring \eqref{col:prendiville-2}.
We also saw that we can improve the leading constant with a small modification: $R = [2, (N/2)^{1/2}] \sqcup (N/2, N]$ and $B = ((N/2)^{1/2}, N/2]$.
Graphically,
%--------------------------------------%
\begin{equation}
\label{col:improved-2}
\begin{tikzpicture}[baseline=(current  bounding  box.center), scale=6.5]
\clip (-0.1,0.04) rectangle (0.6+0.1,-0.13);
\foreach \x/\xtext in
{0/$2$, 1/$(\frac{N}{2})^{\frac{1}{2}}$, 2/$\phantom{((}\frac{N}{2}\phantom{)^{\frac{1}{2}}}$, 3/$N$}
\draw (\x*0.2,-0.02) node[below] {\strut \xtext};
\ccint{0}{0.2}{Red};
\ocint{0.2}{0.4}{Cerulean};
\ocint{0.4}{0.6}{Red};
\end{tikzpicture}
\end{equation}
%--------------------------------------%
Again, the only monochromatic solutions to $x y = z$ are red and lie in the now shortened initial interval $[2, (N/2)^{1/2}]$.
This leads to an improved constant of $1/(2\sqrt{2})$, which implies the sharpness of \Cref{thm:two-colours}.

For three colours, Prendiville suggested the very natural generalisation of \eqref{col:prendiville-2}:
%--------------------------------------%
\begin{equation}
\label{col:prendiville-3}
\begin{tikzpicture}[baseline=(current  bounding  box.center), scale=6.5]
\clip (-0.1,0.04) rectangle (0.6+0.1,-0.11);
\foreach \x/\xtext in
{0/$2$, 1/$N^{\frac{1}{4}}$, 2/$N^{\frac{1}{2}}$, 3/$N$}
\draw (\x*0.2,-0.02) node[below] {\strut \xtext};
\ccint{0}{0.2}{Red};
\ocint{0.2}{0.4}{Cerulean};
\ocint{0.4}{0.6}{ForestGreen};
\end{tikzpicture}
\end{equation}
%--------------------------------------%
Explicitly, we partition $[2,N] = R \sqcup B \sqcup G$ with $R = [2, N^{1/4}]$, $B = (N^{1/4}, N^{1/2}]$ and $G = (N^{1/2}, N]$.
This colouring has $(1/4)N^{1/4} \log N + O\p[\big]{N^{1/4}}$ monochromatic solutions, which, as \Cref{thm:lower-bound} shows, is far from optimal.
Indeed, the following $3$-colouring outperforms it significantly:
%--------------------------------------%
\begin{equation}
\label{col:schur-3}
\begin{tikzpicture}[baseline=(current  bounding  box.center), scale=6.5]
\clip (-0.1,0.04) rectangle (1.0+0.1,-0.11);
\foreach \x/\xtext in
{0/$2$, 1/$N^{\frac{1}{5}}$, 2/$N^{\frac{2}{5}}$, 4/$N^{\frac{4}{5}}$, 5/$N$}
\draw (\x*0.2,-0.02) node[below] {\strut \xtext};
\ccint{0}{0.2}{Red};
\ocint{0.2}{0.4}{Cerulean};
\ocint{0.4}{0.8}{ForestGreen};
\ocint{0.8}{1.0}{Cerulean};
\end{tikzpicture}
\end{equation}
%--------------------------------------%
In other words, we partition $[2, N] = R \sqcup B \sqcup G$ into $R = [2, N^{1/5}]$, $B = (N^{1/5}, N^{2/5}] \sqcup (N^{4/5}, N]$ and $G = (N^{2/5}, N^{4/5}]$.
The only monochromatic solutions to $x y = z$ in \eqref{col:schur-3} are in the first red interval, and there are $(1/5) N^{1/5} \log N + O\p[\big]{N^{1/5}}$ of them.
Again, we could do some slight modifications in this colouring to reduce the leading constant.
%--------------------------------------%
\begin{equation}
\label{col:improved-3}
\begin{tikzpicture}[baseline=(current  bounding  box.center), scale=6.5]
\clip (-0.1,0.04) rectangle (1.6+0.1,-0.15);
\foreach \x/\xtext in
{0/$2$, 1/$\frac{N^{1/5}}{2^{4/5}}$, 2/$\frac{N^{2/5}}{2^{8/5}}$, 3/$\frac{N^{2/5}}{2^{3/5}}$, 5/$\frac{N^{4/5}}{2^{6/5}}$, 6/$\frac{N^{4/5}}{2^{1/5}}$,7/$\frac{N}{2}$,8/$N$}
\draw (\x*0.2,-0.02) node[below] {\strut \xtext};
\ccint{0}{0.2}{Red};
\ocint{0.2}{0.4}{Cerulean};
\ocint{0.4}{0.6}{Red};
\ocint{0.6}{1.0}{ForestGreen};
\ocint{1.0}{1.2}{Red};
\ocint{1.2}{1.4}{Cerulean};
\ocint{1.4}{1.6}{Red};
\end{tikzpicture}
\end{equation}
%--------------------------------------%
The colouring indicated in \eqref{col:improved-3} only has monochromatic solutions in the first red interval, but this time, there are $\p[\big]{1/(5 \cdot 2^{1/5})} N^{1/5} \log N + O\p[\big]{N^{1/5}}$ of them.
While the inclusion of these multiple red intervals reduces the constant, the essence of \eqref{col:improved-3} is captured by the colouring \eqref{col:schur-3},  which is constructed from an auxiliary colouring $c \from \set{1, 2, 3, 4} \to \set{\rB, \rG}$ with $c(1) = c(4) = \rB$ and $c(2) = c(3) = \rG$.
The auxiliary colouring has no monochromatic solutions to the Schur equation $x + y = z$, suggesting that using a Schur colouring in the logarithmic scale is at least sometimes sharp.

\subsection{Interval Schur numbers}

Recall the definition of the Schur numbers
\begin{equation*}
    S(r) \defined \min \set[\Bigg]{ N \in \NN \st
    \begin{array}{c}
    \text{for any $r$-colouring of the integer interval $[N]$ there} \\
    \text{are $x, y, z \in [N]$ of the same colour with $x + y = z$}
    \end{array}
    }.
\end{equation*}
The only known values of $S(r)$ are $S(1) = 2$, $S(2) = 5$, $S(3) = 14$, $S(4) = 45$ and $S(5) = 161$.
A related quantity that will be useful in our constructions is the following continuous analogue of the Schur numbers, which we call \indef{interval Schur numbers}
\begin{equation*}
    I(r) \defined \inf \set[\Bigg]{ T \in \RR \st
    \begin{array}{c}
    \text{for any $r$-colouring of the real interval $[1,T]$ there} \\
    \text{are $x, y, z \in [1,T]$ of the same colour with $x + y = z$}
    \end{array}
    }.
\end{equation*}
By definition, for any $T' < I(r)$, there is an $r$-colouring of the real interval $[1,T']$ without any monochromatic solution to $x + y = z$.
Moreover, notice that $I(r) \leq S(r)$ and $I(r) \geq 2$ for all $r \geq 1$.

By applying the compactness principle, we can derive an alternative definition of $I(r)$ as the largest $T \in \RR$ for which there is an $r$-colouring of the half-open real interval $(1,I(r)]$ with no monochromatic solution to $x + y = z$.
This reformulation will be utilised in our proof of \Cref{thm:upper-bound}.
A similar argument can be used to replace $(1,I(r)]$ with $[1,I(r))$ in this alternative definition.

%--------------------------------------%
\begin{lemma}
\label{lem:compact}
For all $r \geq 1$, there is an $r$-colouring of the half-open real interval $(1,I(r)]$ with no monochromatic solution to $x + y = z$.
Moreover, for all $T > I(r)$, no such $r$-colouring exists for $(1,T]$.
\end{lemma}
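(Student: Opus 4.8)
The plan is to deduce both assertions from a standard compactness argument, packaging colourings of a real interval as points in a compact product space. First I would fix $T > I(r)$ arbitrary; by definition of $I(r)$ as an infimum, there is some $T'$ with $I(r) < T' \leq T$ such that every $r$-colouring of $[1,T']$ has a monochromatic solution to $x+y=z$. Since $[1,T'] \subseteq (1,T]$, any $r$-colouring of $(1,T]$ restricts to one of $[1,T']$ and hence already contains a monochromatic Schur triple; this proves the ``moreover'' part immediately, with no compactness needed.

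For the first assertion, the idea is to take a sequence $T_n \uparrow I(r)$ with $T_n < I(r)$, so that for each $n$ there is an $r$-colouring $c_n$ of $[1, T_n]$ — equivalently of $(1, I(r)]$, after extending $c_n$ arbitrarily (say by a fixed colour) to the points in $(1,I(r)] \setminus [1,T_n]$ — with no monochromatic solution to $x+y=z$ inside $[1,T_n]$. I would then view each $c_n$ as an element of the product space $[r]^{(1,I(r)]}$, equipped with the product topology, which is compact by Tychonoff's theorem. Passing to a convergent subnet (or using that $[r]^{(1,I(r)]}$ is sequentially compact along a suitable ultrafilter, or simply diagonalising over a countable dense subset and then over all reals), I obtain a limit colouring $c \from (1,I(r)] \to [r]$. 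The key point is that $c$ has no monochromatic Schur triple: if $x,y,z \in (1,I(r)]$ satisfied $x+y=z$ and $c(x)=c(y)=c(z)$, then $z \leq I(r)$ forces $x, y < I(r)$, so all three lie in $[1,T_n]$ for all large $n$; by convergence at the three fixed points $x,y,z$, we would have $c_n(x)=c_n(y)=c_n(z)$ for large $n$, contradicting the choice of $c_n$.

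I expect the main (though still routine) obstacle to be the bookkeeping in the compactness step: one must be slightly careful that the ``bad'' colourings $c_n$ are genuinely defined on a common domain before taking a limit, and that the finitary constraint ``no monochromatic solution with all three coordinates $\leq T_n$'' is closed in the product topology — which it is, since it is an intersection, over the countably or uncountably many triples $(x,y,z)$, of the clopen conditions $\lnot(c(x)=c(y)=c(z))$, each depending on only finitely many coordinates. Since $z < I(r)$ for every such triple that could appear in the limit, there is no issue at the endpoint $I(r)$ itself. Once this is set up, the conclusion that $c$ is an $r$-colouring of $(1, I(r)]$ with no monochromatic Schur triple is immediate, completing the proof.
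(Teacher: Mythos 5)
Your approach is in the right spirit, but both halves contain a gap stemming from the same source: the endpoint subtleties of passing between closed intervals $[1,T']$ and half-open intervals $(1,T]$. The paper resolves these with a short rescaling trick, which your argument omits.

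For the ``moreover'' part, the claimed inclusion $[1,T'] \subseteq (1,T]$ is false, since $1 \in [1,T']$ but $1 \notin (1,T]$; an $r$-colouring of $(1,T]$ therefore does not restrict to one of $[1,T']$. The paper instead scales: choosing $I(r) < T' < T$ and $\alpha = T/T' > 1$, the dilated interval $\alpha \cdot [1,T'] = [\alpha, T]$ does sit inside $(1,T]$, and monochromatic solutions to $x+y=z$ are preserved by dilation.

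For the first assertion, the limit colouring $c$ is only constrained on $(1, I(r))$, because each $c_n$ was extended arbitrarily on $(T_n, I(r)]$. A monochromatic triple $(x,y,z)$ for $c$ with $z = I(r)$ is not ruled out: such a $z$ lies in no $[1,T_n]$, so the sum-freeness of $c_n$ on $[1,T_n]$ says nothing about the colour of $z$, and your claim that ``$z < I(r)$ for every such triple that could appear'' is unjustified (a triple $x+y=I(r)$ with $x,y>1$ is possible whenever $I(r) > 2$). You could repair this by rescaling the $c_n$ before taking the limit, but as written the step fails. The paper's proof sidesteps the issue entirely: it applies the compactness principle to produce a finite witness set $F \subseteq (1, I(r)]$ such that every $r$-colouring of $F$ has a monochromatic solution, and then rescales $F$ by its smallest element $m > 1$ to obtain such a set inside $[1, M/m]$ (where $M = \max F$), with $M/m < I(r)$, contradicting the definition of $I(r)$. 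This rescaling is the key step your argument is missing.
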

%--------------------------------------%
\begin{proof}
First we show that if $T > I(r)$, no $r$-colouring of $(1,T]$ is devoid of monochromatic solutions to $x + y = z$.
Let $I(r) < T' < T$ and set $\alpha = T / T' > 1$.
By the definition of $I(r)$, any $r$-colouring of the real interval $[1,T']$ yields a monochromatic solution to $x + y = z$.
The same is true for $\alpha \cdot [1, T'] = [\alpha, T] \subseteq (1, T]$, as claimed.

It remains to find an $r$-colouring of $(1, I(r)]$ with no monochromatic solution to $x + y = z$. Suppose for a contradiction that no such colouring exists.
By the compactness principle\footnote{Note that this argument uses some form of the axiom of choice.} \cite[\S 1.5, Theorem 4]{Graham2013-rl} there exists a finite non-empty set $F \subseteq (1, I(r)]$ such that every $r$-colouring of $F$ produces a monochromatic solution to $x + y = z$.
Writing $m$ and $M$ for the smallest and largest elements of $F$ respectively, we notice that the set $F/m \defined \set{x/m \st x \in F}$ also has the property that every $r$-colouring produces a monochromatic solution to $x + y = z$.
The same is also true for the real interval $[1, M/m]$ since it contains $F/m$.
As $m > 1$ and $M \leq I(r)$, we have $M/m < I(r)$, which contradicts the definition of $I(r)$.
\end{proof}
%--------------------------------------%

We now return to the task of proving \Cref{thm:upper-bound}.
As hinted by the colouring \eqref{col:schur-3}, the key idea behind our approach is to convert a multiplicative Ramsey problem into an additive one.

%--------------------------------------%
\begin{proof}[Proof of Theorem \ref{thm:upper-bound}]
Recall that our goal is to show that for every $r \geq 2$ and large enough $N$, there is a colouring of $[2, N]$ with at most $C_r N^{1/I(r-1)} \log N + O\p[\big]{N^{1/I(r-1)}}$ solutions to $x y = z$, for some constant $C_r > 0$.

By \Cref{lem:compact}, there is an $(r-1)$-colouring $\xi$ of the real interval $(1, I(r-1)]$ without monochromatic solutions to $x + y = z$.
We build an $r$-colouring of the integer interval $[2,N]$ in the following way.
For $M = N^{1/I(r-1)}$, we colour the interval $[2,M]$ with the new colour $r$ and use $\xi$ to fill the interval $(M,N]$ as follows:
\begin{equation*}
    c(x) \defined \begin{cases}
        r & \text{if $2 \leq x \leq M$,} \\
        \xi\p[\big]{\log x / \log M} & \text{if $M < x \leq N$.}
    \end{cases}
\end{equation*}
Since every $x \in (M, N]$ satisfies $\log x / \log M \in (1, I(r-1)]$, this defines a colouring of the entire real interval $[2,N]$.

We claim that the colouring $c$ has no monochromatic solutions to $x y = z$ with colours lying in $\set{1, \dotsc, r - 1}$.
Indeed, if we had such a solution, then we must have $M < x, y, z \leq N$ with $x = M^{x'}$, $y = M^{y'}$ and $z = M^{z'}$ for some $x', y', z' \in (1, I(r-1)]$.
But $c(x) = \xi(x') = c(y) = \xi(y') = c(z) = \xi(z')$, and the equality $x y = z$ implies that $x' + y' = z'$, contradicting our choice of $\xi$.

Therefore, all monochromatic solutions occur in colour $r$ and must come from the interval $[2,M]$ alone.
Therefore, $c$ contains $M \log M + O(M)$ monochromatic solutions to $x + y = z$.
However,
\begin{align*}
    M \log M + O(M)
    &= \frac{1}{I(r-1)} N^{1/I(r-1)} \log N + O\p[\big]{N^{1/I(r-1)}},
\end{align*}
so we can choose $C_r = 1/I(r-1)$ and we are done.
\end{proof}
%--------------------------------------%

\subsection{A related Schur number}

In \Cref{thm:lower-bound}, we guarantee that any $r$-colouring of $[2, N]$ has at least $c_r N^{1/S(r-1)}$ monochromatic solutions to $x y = z$. Conversely, our constructions for \Cref{thm:upper-bound} provide at most $ C_r N^{1/I(r-1)} \log N$ monochromatic solutions.
It therefore becomes of prime importance to understand whether $I(r) = S(r)$.
Curiously, our investigations into this matter lead us to the following variant of the Schur numbers
\begin{equation*}
    S^*(r) \defined \min \set*{ N \in \NN \st
    \begin{array}{c}
    \text{for any $r$-colouring of the integer interval $[N]$} \\
    \text{there are $x, y, z \in [N]$ of the same colour with} \\
    \text{either $x + y = z$ or $x + y + 1 = z$}
    \end{array}
    }.
\end{equation*}
This parameter was introduced by Abbott and Hanson~\cite[\S4]{Abbott1972-sm} for a different purpose.
The significance of $S^*(r)$ in our work stems from the following observation.

%--------------------------------------%
\begin{lemma}
\label{lem:rel-schur}
For every $r \geq 1$, we have $S^*(r) \leq I(r)\leq S(r)$.
\end{lemma}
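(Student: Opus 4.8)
The plan is to prove the two inequalities separately. The bound $I(r)\le S(r)$ is essentially free, while $S^*(r)\le I(r)$ carries the content, and is exactly where the auxiliary equation $x+y+1=z$ in the definition of $S^*(r)$ comes into play.

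For $I(r)\le S(r)$: given any $r$-colouring of the real interval $[1,S(r)]$, restrict it to the integers $\set{1,\dotsc,S(r)}=[S(r)]$. By the definition of the Schur number, this restricted $r$-colouring of $[S(r)]$ has elements $x,y,z$ of a common colour with $x+y=z$, and these lie in $[1,S(r)]$. Hence $S(r)$ belongs to the set of $T$ over which the infimum defining $I(r)$ is taken, so $I(r)\le S(r)$.

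For $S^*(r)\le I(r)$, the idea is to thicken each integer into a half-open unit interval, turning a good integer colouring into a good real colouring. First note $S^*(r)\geq 2$, since the only $r$-colouring of $\set{1}$ has no $x,y,z$ with $x+y=z$ (which needs $z\geq 2$) nor with $x+y+1=z$ (which needs $z\geq 3$). By minimality of $S^*(r)$ there is therefore an $r$-colouring $\phi$ of $[S^*(r)-1]$ with \emph{no} monochromatic solution to $x+y=z$ and \emph{no} monochromatic solution to $x+y+1=z$. Define $c\from[1,S^*(r))\to[r]$ by $c(x)\defined\phi(\floor{x})$, which makes sense because $1\le x<S^*(r)$ forces $\floor{x}\in\set{1,\dotsc,S^*(r)-1}$. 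I claim $c$ has no monochromatic solution to $x+y=z$. Suppose $x+y=z$ with $x,y,z\in[1,S^*(r))$, and write $x=a+\alpha$, $y=b+\beta$, $z=d+\gamma$ with $a,b,d\in\NN$ and $\alpha,\beta,\gamma\in[0,1)$. Then $a+b-d=\gamma-\alpha-\beta\in(-2,1)$, and since the left-hand side is an integer we get $d\in\set{a+b,\,a+b+1}$; in both cases $a$, $b$, $d$ and also $a+b$ (which equals $d$ or $d-1$, and is at least $2$) lie in $[S^*(r)-1]$. If this triple were monochromatic for $c$, then $\phi(a)=\phi(b)=\phi(d)$, which is a monochromatic solution of $x+y=z$ for $\phi$ when $d=a+b$, and of $x+y+1=z$ for $\phi$ when $d=a+b+1$ — either way contradicting the choice of $\phi$. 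This proves the claim. To conclude, suppose for contradiction that $I(r)<S^*(r)$; by the definition of the infimum there is a real $T<S^*(r)$ such that every $r$-colouring of $[1,T]$ has a monochromatic solution to $x+y=z$, yet the restriction of $c$ to $[1,T]$ is an $r$-colouring of $[1,T]$ with no such solution. Hence $S^*(r)\le I(r)$.

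The one step needing care is the bookkeeping after the inequality $a+b-d\in(-2,1)$: one must check that $a$, $b$, and whichever of $a+b$ or $a+b+1$ equals $d$ all stay inside $[S^*(r)-1]$, so that both forbidden configurations of $\phi$ are genuinely activated — this is precisely the reason the definition of $S^*(r)$ must rule out $x+y+1=z$ as well as $x+y=z$. Everything else is routine, though I would also sanity-check the edge case $S^*(r)=2$, where $\phi$ colours only $\set{1}$ and $c$ is constant on $[1,2)$, which trivially has no monochromatic Schur triple since $x+y\geq 2$ there.
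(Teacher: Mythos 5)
Your proof is correct and follows essentially the same approach as the paper's: the bound $I(r)\le S(r)$ by restricting a real colouring to the integers in $[S(r)]$, and $S^*(r)\le I(r)$ by thickening a good colouring of $[S^*(r)-1]$ via $c(x)=\phi(\lfloor x\rfloor)$ and observing that $\lfloor z\rfloor\in\{\lfloor x\rfloor+\lfloor y\rfloor,\ \lfloor x\rfloor+\lfloor y\rfloor+1\}$ activates one of the two forbidden patterns. Your bookkeeping of the floor decomposition and the edge-case check are slightly more explicit than the paper's, but the underlying argument is identical.
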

%--------------------------------------%
\begin{proof}
The bound $I(r)\leq S(r)$ follows immediately from the fact that any real interval $[1,T]$ which admits a sum-free $r$-colouring cannot contain the integer interval $[S(r)]$.

We now show that $S^*(r) \leq I(r)$.
Let $S = S^*(r) - 1$.
By definition, there is an $r$-colouring $\xi$ of the integer interval $[S]$ without monochromatic solutions to $x + y = z$ and $x + y + 1 = z$.
We now define a $r$-colouring $c$ of the real interval $[1,S+1)$ by
\begin{equation*}
    c(x) = \xi\p[\big]{\floor{x}}.
\end{equation*}
Now suppose that the numbers $x$, $y$ and $z$ in the real interval $[1,S)$ satisfy $x + y = z$.
Hence,
\begin{equation*}
    \floor{z} = \floor{x} + \floor{y} \,\text{ or }\, \floor{z} = \floor{x} + \floor{y} + 1.
\end{equation*}
In either case, the colours $c(x)$, $c(y)$ and $c(z)$ cannot all be the same.
Therefore, $I(r) \geq S+1 = S^*(r)$ as claimed.
\end{proof}
%--------------------------------------%

From the proof, we observe that a colouring of an integer interval that avoids monochromatic solutions to both $x + y = z$ and $x + y + 1 = z$ can be extended to a colouring of a continuous interval by replacing an element $k$ by a monochromatic real interval $[k,k+1)$ of the same colour as $k$.
The condition that there is no solution to $x + y + 1 = z$ handles `rounding issues' coming from the fact that $\lfloor x+y\rfloor=\lfloor x\rfloor + \lfloor y\rfloor + 1$ can occur when $x$ and $y$ are near the right-endpoint of their corresponding interval.
In fact, the main method we have to produce valid colourings to estimate $I(r)$ is to exhibit a colouring for $S^*(r)$ and extending it in this way.

When describing a colouring $c \from [N] \to \set{\rR, \rB}$, we sometimes write a length $N$ word over the alphabet $\set{\rR, \rB}$.
That is, we write $\rR \rR \rB$ for the colouring with $c(1) = \rR$, $c(2) = \rR$ and $c(3) = \rB$.
We use a similar convention to describe colourings with an arbitrary number of colours.

%--------------------------------------%
\begin{lemma}
\label{lem:small-schur}
We have $S^*(r) = I(r) = S(r)$ for $r = 1, 2, 3$.
\end{lemma}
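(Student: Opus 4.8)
By Lemma~\ref{lem:rel-schur} we have $S^*(r) \leq I(r) \leq S(r)$ for every $r$, so it suffices to verify that $S^*(r) = S(r)$ for $r = 1, 2, 3$; that is, $S^*(1) = 2$, $S^*(2) = 5$, and $S^*(3) = 14$. Since the known Schur numbers give the upper bounds $S^*(r) \leq S(r)$ for free, the content is to establish the matching lower bounds $S^*(r) \geq S(r)$, i.e.\ to exhibit, for each of $r = 1, 2, 3$, an $r$-colouring of the integer interval $[S(r) - 1]$ with no monochromatic solution to either $x + y = z$ or $x + y + 1 = z$.

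For $r = 1$: on $[1] = \{1\}$ the equation $x + y = z$ forces $z \geq 2$ and $x + y + 1 = z$ forces $z \geq 3$, so the trivial colouring works and $S^*(1) = 2$. For $r = 2$: I would write down an explicit $2$-colouring of $[4]$ and check the (few) cases by hand. The natural candidate is $\rR\rB\rB\rR$ (that is, $c(1) = c(4) = \rR$, $c(2) = c(3) = \rB$), which is essentially the auxiliary colouring highlighted after \eqref{col:improved-3}; one checks that neither $x+y=z$ nor $x+y+1=z$ has a monochromatic solution in $[4]$. For $r = 3$: I would exhibit an explicit $3$-colouring of $[13]$ avoiding monochromatic solutions to $x+y=z$ and $x+y+1=z$. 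A good starting point is to take a Schur colouring of $[13]$ (one witnessing $S(3) = 14$, with colour classes such as $\{1,4,10,13\}$, $\{2,3,11,12\}$, $\{5,6,7,8,9\}$ in one standard form) and verify that it also avoids the shifted equation; if the off-the-shelf Schur colouring happens to contain a monochromatic solution to $x+y+1=z$, I would perturb it or search among the symmetric sum-free $3$-colourings of $[13]$ until both conditions hold.

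The verification for $r = 2$ is a handful of cases and poses no difficulty. The only real work is the $r = 3$ case: checking that a given $3$-colouring of $[13]$ has no monochromatic solution to $x + y = z$ \emph{and} none to $x + y + 1 = z$ is a finite but somewhat tedious enumeration over the colour classes. I expect this to be the main obstacle — not conceptually hard, but requiring a careful exhaustive check (ideally recorded as an explicit colouring together with the list of classes, so the reader can confirm it), and possibly a small amount of trial and error to locate a colouring that simultaneously avoids both equations. Once such a colouring is displayed, combined with Lemma~\ref{lem:rel-schur} and the known values $S(1) = 2$, $S(2) = 5$, $S(3) = 14$, the chain $S(r) = S^*(r) \leq I(r) \leq S(r)$ closes and the lemma follows.
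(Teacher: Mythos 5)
Your plan coincides with the paper's proof: reduce via Lemma~\ref{lem:rel-schur} to showing $S^*(r) = S(r)$ for $r=1,2,3$, then exhibit for each $r$ a colouring of $[S(r)-1]$ avoiding both $x+y=z$ and $x+y+1=z$, with $\rR\rB\rB\rR$ handling $r=2$. The explicit $3$-colouring you propose (classes $\{1,4,10,13\}$, $\{2,3,11,12\}$, $\{5,6,7,8,9\}$) is indeed one of the two colourings the paper identifies as valid — the paper classifies all three sum-free $3$-colourings of $[13]$ as $\rR\rB\rB\rR\rG\rG\bfx\rG\rG\rR\rB\rB\rR$ and notes $\bfx\in\{\rR,\rG\}$ also avoids $x+y+1=z$ — so once you carry out the finite verification you flagged, the argument closes exactly as in the paper.
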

%--------------------------------------%
\begin{proof}
In view of \Cref{lem:rel-schur}, it suffices to show that $S^*(r) = S(r)$ for all $r \in \set{1, 2, 3}$.
As the values of $S(r)$ are known for $r \in \set{1, 2, 3}$, it is enough to show that some of the $r$-colourings of $[S(r) - 1]$ without monochromatic solutions to $x + y = z$ also avoid monochromatic solutions to $x + y + 1 = z$.
For $r = 1$, we have $S(1) = 2$ and the only colouring of $[1]$ with one colour has no monochromatic solutions to $x + y = z$ or $x + y + 1 = z$, so $S^*(1) = 2$.

For $r = 2$, we have $S(2) = 5$.
Moreover, up to permuting colours, the only colouring $c \from [4] \to \set{\rR, \rB}$ without monochromatic solutions to $x + y = z$ is $\rR \rB \rB \rR$, which also has no monochromatic solution to $x + y + 1 = z$.
Thus, $S^*(2) = 5$.

For $r = 3$, we have $S(3) = 14$.
There are exactly three colourings $c \from [13] \to \set{\rR, \rB, \rG}$, up to permuting colours, without monochromatic solutions to $x + y = z$. They are
\begin{align*}
%  1  2  3  4  5  6  7  8  9 10 11 12 13
&\rR\rB\rB\rR\rG\rG\rR\rG\rG\rR\rB\rB\rR, \\
&\rR\rB\rB\rR\rG\rG\rB\rG\rG\rR\rB\rB\rR, \\
&\rR\rB\rB\rR\rG\rG\rG\rG\rG\rR\rB\rB\rR.
\end{align*}
In other words, colourings of the form $\rR\rB\rB\rR\rG\rG\bfx\rG\rG\rR\rB\rB\rR$, where $\bfx$ is any colour.
If $\bfx = \rB$, then $x = 3$, $y = 7$ and $z = 11$ would be a monochromatic solution to $x + y = z$.
However, if $\bfx \in \set{\rR, \rG}$, then there are no monochromatic solutions to either $x + y = z$ nor $x + y + 1 = z$, thus $S^*(3) = 14$.
\end{proof}
%--------------------------------------%

We now investigate which $r$-colourings of the integer interval $[2, N]$ can be constructed from \Cref{thm:lower-bound} and \Cref{lem:small-schur}.
If $r = 3$, then $I(2) = S^*(2) = 5$, where the colouring $\rR\rB\rB\rR$ can be turned into a colouring of the real interval $[1,5)$ as
%--------------------------------------%
\begin{equation*}
\begin{tikzpicture}[baseline=(current  bounding  box.center), scale=6.5]
\clip (-0.1,0.04) rectangle (0.8+0.1,-0.1);
\foreach \x/\xtext in
{0/$1$, 1/$2$, 3/$4$, 4/$5$}
\draw(\x*0.2,-0.02) node[below] {\strut \xtext};
\coint{0}{0.2}{Red};
\coint{0.2}{0.6}{Cerulean};
\coint{0.6}{0.8}{Red};
\end{tikzpicture}
\end{equation*}
%--------------------------------------%
which is turned into the colouring \eqref{col:improved-3} in the proof of \Cref{thm:upper-bound}, possibly rearranging the colours.
The same happens for $r = 2$, where the fact that the real interval $[1,2)$ has no solution to $x + y = z$ is leveraged to the construction of \eqref{col:improved-2}.

For $r = 4$, we observe that $I(3) = S^*(3) = 14$.
In \Cref{lem:small-schur}, we noted that the only $3$-colourings of $\set{1,\dotsc,13}$ without monochromatic solutions to $x + y = z$ and $x + y + 1 = z$ are of the form $\rR\rB\rB\rR\rG\rG\bfx\rG\rG\rR\rB\rB\rR$, where $\bfx \in \set{\rR, \rG}$, up to permuting colours.
This leads to the $3$-colourings of the real interval $(1,14]$:
%--------------------------------------%
\begin{center}
\begin{tikzpicture}[scale=6.5]
\clip (-0.1,0.06) rectangle (1.95+0.1,-0.1);
\foreach \x/\xtext in
{0/$1$, 1/$2$, 3/$4$, 4/$5$, 9/$10$, 10/$11$, 12/$13$, 13/$14$}
\draw(\x*0.15,-0.02) node[below] {\strut \xtext};
\ocint{0.00}{0.15}{Red};
\ocint{0.15}{0.45}{Cerulean};
\ocint{0.45}{0.60}{Red};
\ocint{0.60}{1.35}{ForestGreen};
\ocint{1.35}{1.50}{Red};
\ocint{1.50}{1.80}{Cerulean};
\ocint{1.80}{1.95}{Red};
\end{tikzpicture}
\end{center}
%--------------------------------------%
%--------------------------------------%
\begin{center}
\begin{tikzpicture}[scale=6.5]
\clip (-0.1,0.04) rectangle (1.95+0.1,-0.1);
\foreach \x/\xtext in
{0/$1$, 1/$2$, 3/$4$, 4/$5$, 6/$7$, 7/$8$, 9/$10$, 10/$11$, 12/$13$, 13/$14$}
\draw(\x*0.15,-0.02) node[below] {\strut \xtext};
\ocint{0.00}{0.15}{Red};
\ocint{0.15}{0.45}{Cerulean};
\ocint{0.45}{0.60}{Red};
\ocint{0.60}{0.90}{ForestGreen};
\ocint{0.90}{1.05}{Red};
\ocint{1.05}{1.35}{ForestGreen};
\ocint{1.35}{1.50}{Red};
\ocint{1.50}{1.80}{Cerulean};
\ocint{1.80}{1.95}{Red};
\end{tikzpicture}
\end{center}
%--------------------------------------%
which, through \Cref{thm:upper-bound}, are respectively transformed into the $4$-colourings of $[2,N]$

%--------------------------------------%
\noindent\makebox[\textwidth]{%
\centering
\begin{tikzpicture}[scale=6.0]
\def\len{0.165}
\clip (-0.1,0.08) rectangle (14*\len+0.1,-0.15);
\foreach \x/\xtext in
{0/$2$,
1/$\;\;\;N^{\frac{1}{14}}$,
2/$\;\;\;N^{\frac{2}{14}}$,
4/$\;\;\;N^{\frac{4}{14}}$,
5/$\;\;\;N^{\frac{5}{14}}$,
10/$\;\;\;N^{\frac{10}{14}}$,
11/$\;\;\;N^{\frac{11}{14}}$,
13/$\;\;\;N^{\frac{13}{14}}$,
14/$N$}
\draw (\x*\len,-0.02) node[below] {\strut \xtext};
\ccint{0*\len}{1*\len}{Purple};
\ocint{1*\len}{2*\len}{Red};
\ocint{2*\len}{4*\len}{Cerulean};
\ocint{4*\len}{5*\len}{Red};
\ocint{5*\len}{10*\len}{ForestGreen};
\ocint{10*\len}{11*\len}{Red};
\ocint{11*\len}{13*\len}{Cerulean};
\ocint{13*\len}{14*\len}{Red};
\end{tikzpicture}
}
%--------------------------------------%
%--------------------------------------%
\noindent\makebox[\textwidth]{%
\centering
\begin{tikzpicture}[scale=6.0]
\def\len{0.165}
\clip (-0.1,0.04) rectangle (14*\len+0.1,-0.16);
\foreach \x/\xtext in
{0/$2$,
1/$\;\;\;N^{\frac{1}{14}}$,
2/$\;\;\;N^{\frac{2}{14}}$,
4/$\;\;\;N^{\frac{4}{14}}$,
5/$\;\;\;N^{\frac{5}{14}}$,
7/$\;\;\;N^{\frac{7}{14}}$,
8/$\;\;\;N^{\frac{8}{14}}$,
10/$\;\;\;N^{\frac{10}{14}}$,
11/$\;\;\;N^{\frac{11}{14}}$,
13/$\;\;\;N^{\frac{13}{14}}$,
14/$N$}
\draw (\x*\len,-0.02) node[below] {\strut \xtext};
\ccint{0*\len}{1*\len}{Purple};
\ocint{1*\len}{2*\len}{Red};
\ocint{2*\len}{4*\len}{Cerulean};
\ocint{4*\len}{5*\len}{Red};
\ocint{5*\len}{7*\len}{ForestGreen};
\ocint{7*\len}{8*\len}{Red};
\ocint{8*\len}{10*\len}{ForestGreen};
\ocint{10*\len}{11*\len}{Red};
\ocint{11*\len}{13*\len}{Cerulean};
\ocint{13*\len}{14*\len}{Red};
\end{tikzpicture}
}
%--------------------------------------%
Each of these colourings has the property that the only monochromatic solutions to $x y = z$ are contained in the initial interval $[2, (N/2)^{1/14}]$. Thus, there are
\begin{equation}
\label{eq:count-4}
    \p[\big]{1/(14 \cdot 2^{1/14})} N^{1/14} \log N + O\p[\big]{N^{1/14}}
\end{equation}
monochromatic solutions to $x y = z$ in total.
Curiously, the interval $((N/2)^{7/14}, N^{8/14}]$ can be coloured either green or red.
In fact, in the colourings above, this interval can be arbitrarily coloured in green and red without the creation of any additional monochromatic solution to $x y = z$.

While these $4$-colourings of $[2,N]$ have the right order of $N^{1/14}$ monochromatic solutions, we do not obtain a strong asymptotic result like \Cref{thm:two-colours} in this case.
If the count \eqref{eq:count-4} turns out to be asymptotically sharp, this would put a limitation on the possible forms of stability results.
Indeed, this would contradict an \emph{a priori} guess that the optimal colourings must have a strong interval structure.
Indeed, this indicates that this interval structure must accommodate the possibility that some of the intervals are arbitrarily coloured from a palette, rather than being prescribed a unique colour.

We do not obtain the correct exponent for $r = 5$ simply because we have not established whether $I(4) = S(4)$.
However, with a computer search, we were able to determine that $S^*(4) = 41$, and therefore $S^*(4) < S(4) = 45$.
Indeed, our search found all $576$ optimal colourings $c \from [40] \to \set{\rR, \rB, \rG, \rP}$ without monochromatic solutions to $x + y = z$ and $x + y + 1 = z$.
Up to permuting colours, these are all of the form
\begin{equation}
\label{eq:pattern-5}
% 1  2  3  4  5  6   7  8  9 10 11 12 13
\rR\rB\rB\rR\rG\rG\bfx\rG\rG\rR\rB\rB\rR
%14 15  16 17 18  19  20  21  22 23 24  25 26 27
\rP\rP\bfy\rP\rP\bfz\bfw\bfs\bft\rP\rP\bfu\rP\rP
%28 29 30 31 32 33  34 35 36 37 38 39 40
\rR\rB\rB\rR\rG\rG\bfv\rG\rG\rR\rB\rB\rR,
\end{equation}
where $\bfx, \bfv \in \set{\rR, \rG}$, $\bfy, \bfz, \bft, \bfu \in \set{\rR, \rP}$ and $\bfw, \bfs \in \set{\rB, \rG, \rP}$.
In any case, the fact that $S^*(4) < S(4)$ does not extinguish the possibility that $I(4) = S(4)$, which would then imply that \Cref{thm:lower-bound} has the optimal exponent for five colours as well.
The colourings described by \eqref{eq:pattern-5} can be converted into lower bounds for $I(4)$, and consequently for $\cN_4\p[\big]{[2,N], xy=z }$.
As an illustration, the simplest of the colourings in \eqref{eq:pattern-5} gives rise to the following $4$-colouring of $(1,41]$, the labels being omitted for clarity,
%--------------------------------------%
\begin{center}
\begin{tikzpicture}[scale=6.5]
\def\len{0.055}
\clip (-0.05,0.035) rectangle (40*\len+0.1,-0.035);
\ocint{0*\len}{1*\len}{Red};
\ocint{1*\len}{3*\len}{Cerulean};
\ocint{3*\len}{4*\len}{Red};
\ocint{4*\len}{9*\len}{ForestGreen};
\ocint{9*\len}{10*\len}{Red};
\ocint{10*\len}{12*\len}{Cerulean};
\ocint{12*\len}{13*\len}{Red};
\ocint{13*\len}{27*\len}{Purple};
\ocint{27*\len}{28*\len}{Red};
\ocint{28*\len}{30*\len}{Cerulean};
\ocint{30*\len}{31*\len}{Red};
\ocint{31*\len}{36*\len}{ForestGreen};
\ocint{36*\len}{37*\len}{Red};
\ocint{37*\len}{39*\len}{Cerulean};
\ocint{39*\len}{40*\len}{Red};
\end{tikzpicture}
\end{center}
%--------------------------------------%
Using the procedure outlined in the proof of \Cref{thm:upper-bound} and any of the colourings from \eqref{eq:pattern-5}, we obtain a $5$-colouring of $[2,N]$ with
\begin{equation}
\label{eq:count-5}
    \p[\big]{1/(41 \cdot 2^{1/41})} N^{1/41} \log N + O\p[\big]{N^{1/41}}
\end{equation}
monochromatic solutions to $x y = z$.
This time, we do not know whether the exponent $1/41$ is sharp, as \Cref{thm:lower-bound} only guarantees $c N^{1/45}$ monochromatic solutions.
Note that the discussion regarding stability and intervals being assigned a colour palette will be magnified if the count \eqref{eq:count-5} transpires to be asymptotically tight.

Due to the computational cost involved, we have not investigated the value of $S^*(5)$.
Indeed, Heule's proof~\cite{Heule2018-oe} that $S(5) = 161$ was a construction of a 2 petabyte long $\mathrm{SAT}$ certificate, which is currently the largest mathematical proof ever produced.
The computation of $S^*(5)$ is likely considerably easier than that of $S(5)$, as the added constraint could potentially reduce the computation by many orders of magnitude.

%----------------------------------------------------------%

\section{Multiplicative equations}
\label{sec:multiplicative}

We now turn our attention to general multiplicative equations.
The purpose of this section is to prove \Cref{thm:lower-bound-a} and \Cref{thm:upper-bound-a}, which provide lower and upper bounds respectively for the number of monochromatic non-degenerate solutions to equations of the form
\begin{equation*}
    x_1^{a_1}x_2^{a_2} \dotsm x_k^{a_k} = y,
    \tag{\ref{eq:multiplicative}}
\end{equation*}
in any $r$-colouring of the interval $[2,N]$.
Let $\bfa = (a_1, \dotsc, a_k) \in \NN^k$ and, without loss of generality, assume that $a_1 \leq a_2 \leq \dotsb \leq a_k$.
We write $m(\bfa) = \card{\set {i \in k \st a_i = 1}}$ and we will always assume $m(\bfa) \geq 1$. These assumptions imply that $a_1=1$.

As with the multiplicative Schur equation $x y = z$, our bounds for the number of solutions for this equation are given in terms of Rado numbers associated with the corresponding additive equation
\begin{equation*}
    a_1 x_1 + a_2 x_2 + \dotsb + a_k x_k = y.
    \tag{\ref{eq:additive}}
\end{equation*}
Let $R_{\bfa}(r)$ denote the $r$-colour Rado number for this equation, namely, the minimum $N$ such that every $r$-colouring of $[N]$ has $x_1, \dotsc, x_k, y$ of the same colour that satisfy \eqref{eq:additive}.
The existence of these numbers for all $r$ follows from Rado's criterion and our assumption that $m(\bfa) \geq 1$.

\subsection{Lower bounds}

The proof of \Cref{thm:lower-bound-a} is an extension of the proof of \Cref{thm:lower-bound}.
Indeed, we use the same two-dimensional set
\begin{equation*}
    M_{b} \defined \set[\big]{ 2^{ji} b^i \st 1 \leq i \leq S,\; 1 \leq j \leq W } \cup \set[\big]{ 2^j \st 1 \leq j \leq W },
\end{equation*}
for some choice of $S$ and $W$.
As before, we write $M_{b,j}$ for the geometric progression
\begin{equation*}
    M_{b,j} \defined \set[\big]{ (2^j b)^i \st 1 \leq i \leq S },
\end{equation*}
and $M' = \set{2^j \st 1 \leq j \leq W}$, whence $M_b = \bigcup_{1 \leq j \leq W} M_{b,j} \cup M'$.

%--------------------------------------%
\begin{lemma}
\label{lem:ramsey-pattern-r-Gen}
Let $\bfa = (a_1, \dotsc, a_k)$ with $m(\bfa) \geq 1$.
For any $r \geq 2$, there exists $F(r,\bfa)$ such that if we set $S = R_{\bfa}(r-1)$ and $W = F(r,\bfa)$, then any $r$-colouring of the set $M_b$ contains a monochromatic non-degenerate solution to $x_1^{a_1} \dotsm x_k^{a_k} = y$ such that $y$ is a multiple of $b$.
\end{lemma}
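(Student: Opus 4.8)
The plan is to mimic the proof of \Cref{lem:ramsey-pattern-r}, replacing the Schur number $S(r-1)$ by the Rado number $R_{\bfa}(r-1)$ (finite by Rado's criterion, since $m\geq 1$) and the Schur equation by \eqref{eq:additive}. Put $S = R_{\bfa}(r-1)$, $\beta = a_2 + \dotsb + a_k$, and $L = S!\,(a_1 + \dotsb + a_k)\,r(k-1)$, and set $W = F(r,\bfa) \defined W(L+1,\,r^{S})$. Given an $r$-colouring $c$ of $M_b$, where $b>1$ is odd, colour each $j\in[W]$ by the pattern $\xi(j) = \bigl(c((2^{j}b)^{1}),\dotsc,c((2^{j}b)^{S})\bigr)\in[r]^{S}$ induced on the geometric progression $M_{b,j}$. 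By van der Waerden's theorem there is a $\xi$-monochromatic progression $\set{j_0,j_0+d,\dotsc,j_0+Ld}\subseteq[W]$, so that $c\bigl((2^{j}b)^{i}\bigr)$ equals a colour $\psi(i)$ depending only on $i$ whenever $j$ lies in this progression. I would then split according to whether $\psi$ uses at most $r-1$ colours on $[S]$.

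If $\psi$ uses at most $r-1$ colours, then it is an $(r-1)$-colouring of $[S]=[R_{\bfa}(r-1)]$, so there are $n_1,\dotsc,n_k,n_y\in[S]$ of one colour with $a_1 n_1 + \dotsb + a_k n_k = n_y$. Fixing distinct $t_1,\dotsc,t_k\in\set{0,1,\dotsc,k-1}$ and setting
\begin{equation*}
    x_\ell \defined \bigl(2^{\,j_0 + t_\ell n_y d}\,b\bigr)^{n_\ell}\quad(1\leq\ell\leq k),\qquad y\defined \bigl(2^{\,j_0 + d\sum_{\ell} a_\ell t_\ell n_\ell}\,b\bigr)^{n_y},
\end{equation*}
a short computation using $\sum_\ell a_\ell n_\ell = n_y$ shows $x_1^{a_1}\dotsm x_k^{a_k} = y$; the shift indices $t_\ell n_y$ and $\sum_\ell a_\ell t_\ell n_\ell$ are all at most $(k-1)S(a_1 + \dotsb + a_k)\leq L$, so every element stays in the monochromatic progression and has colour $\psi(n_\ell)$ or $\psi(n_y)$. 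Since $b$ is odd, $2^{u}b^{v}=2^{u'}b^{v'}$ forces $u=u'$ and $v=v'$, so two $x_\ell$ with equal $n_\ell$ are separated by their distinct shifts; the solution is thus non-degenerate, and clearly $b\mid y$.

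The case where $\psi$ is surjective onto $[r]$ is the analogue of the second case of \Cref{lem:ramsey-pattern-r} and is where I expect the only real difficulty to lie. The idea is to use $a_1=1$ to let $x_1$ come from a geometric progression while $x_2,\dotsc,x_k$ come from $M'=\set{2^{q}:q\in[W]}$ (so each $x_\ell^{a_\ell}$ is still a power of $2$), whence matching the exponents of $b$ forces $y$ to occupy the same progression-position $n$ as $x_1$. The subtlety is that multiplying $x_1=(2^{j_0}b)^{n}$ by the $M'$-elements must land $y$ back in an $M_{b,j}$ with $j$ in the monochromatic arithmetic progression; this is handled, generalising the choice $2^{S!d}$ from \Cref{lem:ramsey-pattern-r}, by forcing the exponents of the $M'$-elements to be multiples of $S!d$, since $S!$ is divisible by every $n\in[S]$. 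Concretely, among the $r(k-1)$ numbers $2^{S!dm}$ with $1\leq m\leq r(k-1)$, pigeonhole yields a colour $\chi$ attained for at least $k-1$ of them, say for distinct $m_2,\dotsc,m_k$; by surjectivity $\chi=\psi(n)$ for some $n\in[S]$; and then
\begin{equation*}
    x_1 \defined (2^{j_0}b)^{n},\qquad x_\ell \defined 2^{\,S!dm_\ell}\ \ (2\leq\ell\leq k),\qquad y\defined \bigl(2^{\,j_0 + d(S!/n)\sum_{\ell\geq 2}a_\ell m_\ell}\,b\bigr)^{n}
\end{equation*}
should work: since $a_1=1$ and $n\mid S!$, one gets $x_1^{a_1}\dotsm x_k^{a_k} = 2^{\,nj_0 + S!d\sum_{\ell\geq 2}a_\ell m_\ell}b^{n} = y$; the bound $L\geq S!\,\beta\,r(k-1)$ puts $y$ in the progression, so $c(y)=\psi(n)=\chi$, and also forces $S!dm_\ell\leq W$, so that $x_\ell\in M'$; all of $x_1,\dotsc,x_k,y$ then have colour $\chi$; the solution is non-degenerate because $x_1$ is a multiple of $b$ while $x_2,\dotsc,x_k$ are powers of $2$ with distinct exponents; and $b\mid y$. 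With both cases in place, the choice $W=F(r,\bfa)=W(L+1,r^{S})$ validates every estimate above, and the lemma follows; the bookkeeping that juggles the divisibility, range, and colour constraints in the second case is the one point requiring genuine care.
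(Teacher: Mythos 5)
Your proposal is correct, and it follows the paper's overall skeleton (van der Waerden applied to the pattern colouring $\xi$, then a case split on whether $M_{b,j_0}$ uses all $r$ colours), but Case~2 is handled by a genuinely different and simpler argument. The paper introduces an auxiliary parameter $T(r)$, defined as the least integer so that every $r$-colouring of $[T]$ yields a monochromatic solution, with pairwise distinct $x_i$, to the homogenised equation $a_2 x_2 + \dotsb + a_k x_k = (a_2+\dotsb+a_k)y$; the existence of $T(r)$ is obtained by invoking the supersaturated Rado theorem of Frankl--Graham--R\"odl, and the resulting monochromatic Rado solution inside $M'' = \set{2^{S!dj} : 1 \leq j \leq T}$ supplies the factors $x_2,\dotsc,x_k$ and a $y''$ that is then absorbed into the final $y'$ via $y' = x_1 (y'')^{A}$. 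You instead observe that no auxiliary additive equation needs to be solved inside $M'$ at all: any $k-1$ distinct powers of $2$ of the form $2^{S!dm}$ of the same colour will do, because using $a_1 = 1$ and $n \mid S!$ you can always fold $\prod_{\ell\geq 2} x_\ell^{a_\ell}$ directly into the exponent of a single element $y = (2^{j_0 + d(S!/n)\sum_{\ell\geq 2}a_\ell m_\ell}\,b)^n$ sitting in the monochromatic van~der~Waerden progression. Getting $k-1$ such elements of one colour is a trivial pigeonhole among $r(k-1)$ candidates, which lets you take $L = S!(a_1+\dotsb+a_k)r(k-1)$ explicitly and set $W = W(L+1,r^S)$, entirely bypassing the supersaturated Rado theorem and the implicit constant $T(r)$. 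Your treatment of Case~1 is the same as the paper's up to bookkeeping: where the paper chooses shifts $h_n$ inductively by pigeonhole to separate coincident factors, you fix distinct $t_1,\dotsc,t_k \in \set{0,\dotsc,k-1}$ a priori and multiply by $n_y$ so that the exponent of $y$ divides out cleanly, which is marginally tidier. One small point: like the paper's, your argument tacitly uses $b>1$ odd (so that $2^{u}b^{v}$ determines $(u,v)$), which is exactly how the lemma is applied in the proof of \Cref{thm:lower-bound-a}, but it would be cleaner to record this hypothesis explicitly in the lemma.
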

%--------------------------------------%
\begin{proof}
As noted above, we may assume without loss of generality that $a_1=1$.
Let $A = a_2 + \dotsb + a_k$ and define $T = T(r)$ to be the smallest integer $T$ such that every $r$-colouring of $[T]$ contains $x_2, \dotsc, x_k, y$ of the same colour with $x_i \neq x_j$ for all $i \neq j$ such that
\begin{equation}
\label{eq:auxiliar}
    a_2 x_2 + \dotsb + a_k x_k = A y.
\end{equation}
The existence of such a $T$ may be obtained, for example, by appealing to the supersaturated version of Rado's theorem \cite[Theorem 1]{Frankl1988-zl}.
Since  all the $x_i$ are distinct, we have $T = T(r) \geq T(1) \geq k$.
Let $L = 1 + A \cdot T \cdot S!$ and take $F(r, \bfa)$ to be the van der Waerden number $W(L, r^S)$.

Given an $r$-colouring $c \from M_b \to [r]$, consider the auxiliary colouring $\xi \from [W] \to [r]^S$ obtained by colouring an integer $j$ by the colour pattern observed in $M_{b,j}$.
In other words, we define $\xi$ by
\begin{equation*}
    \xi(j) \defined \p[\big]{c(2^j b), c(2^{2j} b^2), \dotsc, c(2^{Sj}b^S)} \in [r]^S.
\end{equation*}
Now, since $W = W\p[\big]{L,r^S}$, there is an arithmetic progression
\begin{equation*}
    \set[\big]{j_0, j_0 + d, j_0 + 2d, \dotsc, j_0 + ATS!d} \subseteq [W]
\end{equation*}
that is monochromatic in the colouring $\xi$.
That is to say, for every $1 \leq i \leq S$, we have
\begin{equation*}
    c((2^{j_0}b)^i) = c((2^{j_0 + d}b)^i) = \dotsb = c((2^{j_0 + ATS!d}b)^i).
\end{equation*}
We distinguish two cases, depending on how many colours appears in $M_{b,j_0}$.

\begin{description}[labelindent=\parindent, leftmargin=2\parindent]
\item[First case] no more than $r-1$ colours are used in $M_{b,j_0}$.
Then, as $M_{b,j_0}$ is a geometric progression of length $R_\bfa(r-1)$, there is a monochromatic solution to $x_1^{a_1} \dotsm x_k^{a_k} = y$ in $M_{b,j_0}$, namely, for some $i_1, \dotsc, i_k, t \in [S]$, we have $x_s = (2^{j_0}b)^{i_s}$ and $y = (2^{j_0}b)^{t}$.
Since this solution may be degenerate if the $i_s$ are not all distinct, we need to modify this solution into a non-degenerate one.
Define $h_1, \dotsc, h_k$ inductively by setting $h_1 = 0$, $x_1'= x_1$ and choosing $h_n$ such that the number $x_n' = (2^{j_0 + d t h_n}b)^{i_n}$ is distinct from $x_1', \dotsc, x_{n-1}'$.
By the pigeonhole principle, we can always choose $h_n$ such that $0 \leq h_n \leq n \leq k$.
Then set $\mu = a_1 h_1 i_1 + \dotsb a_k h_k i_k$ and observe that $\mu \leq A k S \leq A T S! \leq L-1$, so we can set $y' = (2^{j_0 + \mu d} b)^t$.
Thus
\begin{align*}
    y' = (2^{j_0 + \mu d} b)^t
    &= y 2^{\mu d t}
    = \p[\big]{x_1^{a_1} \dotsm x_k^{a_k}} \p[\big]{ 2^{d a_1 h_i i_i} \dotsm 2^{d a_k h_k i_k} } \\
    &= \p[\big]{(2^{j_0}b)^{a_1 i_1} \dotsm (2^{j_0}b)^{a_k i_k}} \p[\big]{2^{dt a_1 h_i i_i} \dotsm 2^{dt a_k h_k i_k}} \\
    &= (2^{j_0 + dt h_1}b)^{a_1 i_1} \dotsm (2^{j_0 + dt h_k}b)^{a_k i_k}
    = (x_1')^{a_1} \dotsm (x_k')^{a_k}.
\end{align*}
As $c(x_s') = c(x_s)$ and $c(y') = c(y)$, we obtain a monochromatic non-degenerate solution to \eqref{eq:multiplicative} such that $b$ divides $y'$.

\item[Second case] all $r$ colours are available in $M_{b,j_0}$.
Consider the set
\begin{equation*}
    M'' = \set{2^{S d j} \st 1 \leq j \leq T}
\end{equation*}
and observe that $M'' \subseteq M'$ as $S! d T \leq W$.
Since $M''$ is a geometric progression of length $T$, we can therefore find a monochromatic solution to the multiplicative analogue of \eqref{eq:auxiliar}, namely,
\begin{equation*}
    x_2^{a_2} \dotsm x_k^{a_k} = y^A,
\end{equation*}
where $x_s = 2^{S! d u_s}$ for $2 \leq s \leq k$ and $y = 2^{S! d v}$ for distinct $u_2, \dotsc, u_k, v \in [T]$.

As every colour is available in $M_{b, j_0}$, there is $1 \leq i \leq S$ where the colour of $(2^{j_0}b)^i$ is the same as the colour of $x_2, \dotsc, x_k, y$ as found above.
Let $x_1 = (2^{j_0}b)^i$ and $y' = (2^{j_0 + (S!/i) v Ad} b)^i$. Now observe that
\begin{equation*}
    c(x_1) = c(x_2) = \dotsb = c(x_k) = c(y'),
\end{equation*}
and that all of $x_1, \dotsc, x_k, y'$ are distinct.
Moreover,
\begin{align*}
    y' &= (2^{j_0 + (S!/i) v Ad} b)^i
    = (2^{j_0} b)^i 2^{S! vAd} \\
    &= x_1 (2^{S!dv})^{A} = x_1 y^A
    = x_1^{a_1} x_2^{a_2} \dotsm x_k^{a_k},
\end{align*}
where in the last equality we made use of the assumption $a_1 = 1$.
Finally, note that $b$ divides $y'$ as required.
\end{description}

In any case, we find a monochromatic non-degenerate solution to \eqref{eq:multiplicative} with $b \divides y$.
\end{proof}
%--------------------------------------%

The proof of \Cref{thm:lower-bound-a} is now quite similar to the proof of \Cref{thm:lower-bound}.
We delay the proof that the exponent $1/R_\bfa(r-1)$ is sharp until the next subsection, where it is established by \Cref{prop:rado-two-colour}.

%--------------------------------------%
\begin{proof}[Proof of Theorem \ref{thm:lower-bound-a}]
Choose $S = R_\bfa(r-1)$ and $W = F(r, \bfa)$ as given by \Cref{lem:ramsey-pattern-r-Gen}.
We are going to find different copies of the set $M_b$ inside $[2,N]$ each of which provides a distinct monochromatic non-degenerate solution to $x_1^{a_1} \dotsm x_k^{a_k} = y$.
To guarantee that $M_b \subseteq [2,N]$, we must choose $2 \leq b \leq N^{1/S}2^{-W}$.
We restrict ourselves to $b$ in the set
\begin{equation*}
    \cB \defined \set[\Big]{ 2 \leq b \leq \floor{N^{1/S}2^{-W}} \st 2 \ndivides b, \text{ $b$ is not a perfect power } }.
\end{equation*}
Each solution obtained by an application of \Cref{lem:ramsey-pattern-r-Gen} is guaranteed to have $b \divides y$, that is, $y = 2^{ji} b^i$ for some $1 \leq i \leq S$ and $1 \leq j \leq W$.
Given this value of $y$, we can uniquely retrieve the value of $b$ by repeatedly dividing $y$ by $2$ and then taking the smallest integer $k$-th root, exactly as we did in the proof of \Cref{thm:lower-bound}.
Therefore, for each $b \in \cB$ we find a different monochromatic non-degenerate solution to \eqref{eq:multiplicative}.

Since only a negligible proportion of integers less than $N^{1/S}$ are perfect powers, we have $\card{\cB} \geq \p[\big]{2^{-W-1} - o(1)} N^{1/S}$ and we are done.
\end{proof}
%--------------------------------------%

%----------------------------------------------------------%

\subsection{Upper bounds}

Given $\bfa = (a_1, \dotsc, a_k) \in \NN^k$, recall the additive equation
\begin{equation*}
    a_1 x_1 + a_2 x_2 + \dotsb + a_k x_k = y.
    \tag{\ref{eq:additive}}
\end{equation*}
In analogy with the interval Schur numbers $I(r)$ considered in \S\ref{sec:constructions}, we introduce the corresponding interval Rado numbers
\begin{equation*}
    I_\bfa(r) \defined \inf \set[\Bigg]{ T \in \RR \st
    \begin{array}{c}
    \text{for any $r$-colouring of the real interval $[1,T]$ there are} \\
    \text{$x_1, \dotsc, x_k, y \in [1,T]$ of the same colour satisfying \eqref{eq:additive}}
    \end{array}
    }.
\end{equation*}
We immediately see that
\begin{equation*}
    1\leq I_{\bfa}(r) \leq R_{\bfa}(r)
\end{equation*}
holds for all $r \geq 1$, where $R_{\bfa}(r) \in \NN$ are the Schur numbers
\begin{equation*}
    R_\bfa(r) \defined \min \set[\Bigg]{ N \in \NN \st
    \begin{array}{c}
    \text{for any $r$-colouring of the integer interval $[N]$ there are} \\
    \text{$x_1, \dotsc, x_k, y \in [N]$ of the same colour satisfying \eqref{eq:additive}}
    \end{array}
    }.
\end{equation*}
In particular, the interval Schur numbers $I_{\bfa}(r)$ are finite whenever $m(\bfa) \geq 1$.

By a compactness argument, we can define $I_\bfa(r)$ alternatively as the maximum $T$ such that there is an $r$-colouring of the real interval $(1, T]$ without any monochromatic solutions to \eqref{eq:additive}.
The proof is identical to that of \Cref{lem:compact}, so it is omitted.

%--------------------------------------%
\begin{lemma}
\label{lem:compact-a}
Let $\bfa = (a_1, \dotsc, a_k)$ be such that $m(\bfa) \geq 1$.
Then for all $r \geq 1$ there is an $r$-colouring of the half-open real interval $(1, I_\bfa(r)]$ with no monochromatic solution to \eqref{eq:additive}.
Moreover, for all $T > I_\bfa(r)$, no such $r$-colouring exists for $(1,T]$.
\qed
\end{lemma}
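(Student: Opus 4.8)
The plan is to transcribe the proof of \Cref{lem:compact} almost verbatim, substituting the homogeneous linear equation \eqref{eq:additive} for the Schur equation $x+y=z$ and $I_\bfa(r)$ for $I(r)$. The only feature of $x+y=z$ that the earlier proof exploits is its invariance under the scaling $(x,y,z)\mapsto(\lambda x,\lambda y,\lambda z)$ for $\lambda>0$, and \eqref{eq:additive} shares this property since it is homogeneous of degree one: $(x_1,\dotsc,x_k,y)$ solves \eqref{eq:additive} if and only if $(\lambda x_1,\dotsc,\lambda x_k,\lambda y)$ does.

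First I would prove the second assertion. Fix $T>I_\bfa(r)$ and choose a real $T'$ with $I_\bfa(r)<T'<T$. The set of reals $T''$ for which every $r$-colouring of $[1,T'']$ has a monochromatic solution to \eqref{eq:additive} is upward-closed (restrict a colouring of a longer interval to a shorter one), so it contains $T'$; that is, every $r$-colouring of $[1,T']$ has a monochromatic solution. Put $\alpha=T/T'>1$, so that $\alpha\cdot[1,T']=[\alpha,T]\subseteq(1,T]$. Given any $r$-colouring $c$ of $[\alpha,T]$, the colouring $x\mapsto c(\alpha x)$ of $[1,T']$ has a monochromatic solution, which by homogeneity transfers to a monochromatic solution of $c$ lying in $[\alpha,T]\subseteq(1,T]$. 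Hence no $r$-colouring of $(1,T]$ avoids monochromatic solutions to \eqref{eq:additive}.

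Next I would establish the first assertion; recall that $I_\bfa(r)$ is finite because $m(\bfa)\geq1$, so $(1,I_\bfa(r)]$ is a genuine bounded interval. Suppose for a contradiction that every $r$-colouring of $(1,I_\bfa(r)]$ has a monochromatic solution to \eqref{eq:additive}. By the compactness principle \cite[\S 1.5, Theorem 4]{Graham2013-rl}, there is a finite non-empty set $F\subseteq(1,I_\bfa(r)]$ such that every $r$-colouring of $F$ has a monochromatic solution to \eqref{eq:additive} with all coordinates in $F$. Writing $m=\min F$ and $M=\max F$, we have $m>1$ and $M\leq I_\bfa(r)$, hence $M/m<I_\bfa(r)$. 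Scaling by $1/m$ and using homogeneity, every $r$-colouring of $F/m\defined\set{x/m\st x\in F}\subseteq[1,M/m]$ has a monochromatic solution to \eqref{eq:additive}, so the same holds for every $r$-colouring of $[1,M/m]$. This contradicts the definition of $I_\bfa(r)$ as an infimum, since $M/m<I_\bfa(r)$.

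I do not anticipate a genuine obstacle: the argument is a routine adaptation of \Cref{lem:compact}. The two points worth double-checking are that \eqref{eq:additive} is indeed homogeneous of degree one, so that both rescaling steps (the $\alpha$-dilation and the $1/m$-contraction) preserve solutions, and that the compactness principle applies here, which it does since \eqref{eq:additive} involves finitely many coefficients and partition regularity over $(1,\infty)$ concerns only finite colourings. The degenerate case in which $(1,I_\bfa(r)]$ happens to be empty (or a single point) is handled exactly as in \Cref{lem:compact}, the empty colouring being vacuously free of monochromatic solutions.
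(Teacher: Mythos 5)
Your proposal is correct and takes exactly the approach the paper intends: the paper explicitly omits this proof, stating it is identical to that of \Cref{lem:compact}, and your adaptation correctly identifies degree-one homogeneity of \eqref{eq:additive} as the sole property enabling the two rescaling steps. Your phrasing of the second assertion via upward-closure of the set of admissible $T''$ is, if anything, slightly more careful than the paper's wording in \Cref{lem:compact}.
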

%--------------------------------------%

Before proceeding to the proof of \Cref{thm:upper-bound-a}, we first record an asymptotic estimate for the number of solutions to \eqref{eq:multiplicative}.

%--------------------------------------%
\begin{proposition}
\label{prop:solution-count}
Let $\bfa = (a_1, \dotsc, a_k) \in \NN^k$ be such that $1 = a_1 \leq \dotsb \leq a_k$ and $m = m(\bfa) \geq 1$ be maximal such that $a_m = 1$.
Then there is a constant $C(\bfa) > 0$ such that, as $X \to \infty$, the number of solutions to the equation $x_1^{a_1} \dotsm x_k^{a_k} = y$ in the integer interval $[X]$ satisfies
\begin{equation*}
    \cN_1\p[\big]{ x_1^{a_1} \dotsm x_k^{a_k} = y, [X]}
    = C(\bfa) X (\log X)^{m - 1} + O_\bfa\p[\big]{ X(\log X)^{m-2}}.
\end{equation*}
Moreover, the leading constant is given by
\begin{equation*}
    C(\bfa) = \frac{\zeta(a_{m+1}) \dotsm \zeta(a_k)}{(m - 1)!},
\end{equation*}
where $\zeta$ is the Riemann zeta function.
\end{proposition}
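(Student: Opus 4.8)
The plan is to count solutions by fixing the values of the ``non-degenerate'' variables $x_{m+1}, \dotsc, x_k$ (those with exponent $>1$) and then counting the remaining solutions, which form a degenerate-type problem with $m$ unit exponents. Write $P = x_{m+1}^{a_{m+1}} \dotsm x_k^{a_k}$, so that the equation becomes $x_1 \dotsm x_m = y / P$. For a fixed choice of $x_{m+1}, \dotsc, x_k$, each value $n = y/P$ with $nP \leq X$ contributes exactly $\binom{?}{?}$—more precisely, the number of ordered $m$-tuples $(x_1,\dotsc,x_m)$ with $x_1 \dotsm x_m = n$, which is the $m$-fold divisor function $d_m(n)$. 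Thus
\begin{equation*}
    \cN_1\p[\big]{x_1^{a_1} \dotsm x_k^{a_k} = y, [X]}
    = \sum_{\substack{x_{m+1}, \dotsc, x_k \geq 1 \\ P \leq X}} \;\sum_{n \leq X/P} d_m(n),
\end{equation*}
where $P = P(x_{m+1},\dotsc,x_k)$ as above. (One checks that insisting $x_i \geq 2$ versus $x_i \geq 1$, or $y \geq 2$, only changes lower-order terms, since $x_i = 1$ for some $i > m$ just reindexes to a shorter product, contributing $O(X(\log X)^{m-2})$; this bookkeeping should be dealt with at the end.)

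Next I would invoke the classical asymptotic for the summatory $m$-fold divisor function: $\sum_{n \leq Y} d_m(n) = Y \cdot Q_{m-1}(\log Y) + O(Y^{1 - 1/m + \eps})$ for a polynomial $Q_{m-1}$ of degree $m-1$ with leading coefficient $1/(m-1)!$. Substituting $Y = X/P$ gives, for each fixed tuple, a main term $(X/P)\bigl(\tfrac{1}{(m-1)!}(\log X - \log P)^{m-1} + \dotsb\bigr)$. Summing over $x_{m+1}, \dotsc, x_k$: the dominant contribution to $\sum_P 1/P = \sum_{x_{m+1},\dotsc,x_k} x_{m+1}^{-a_{m+1}} \dotsm x_k^{-a_k}$, extended over \emph{all} positive integers, is the finite product $\zeta(a_{m+1}) \dotsm \zeta(a_k)$ (finite since each $a_i \geq 2$). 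The error from truncating at $P \leq X$ rather than $P < \infty$ is negligible, as is the error from the lower-order terms of $Q_{m-1}$ and the $\log P$ corrections (these produce $O(X(\log X)^{m-2})$ after summing, since $\sum_P (\log P)/P$ still converges). The error term $O((X/P)^{1 - 1/m + \eps})$ sums to $O(X^{1 - 1/m + \eps})$, which is absorbed. Collecting the leading term yields
\begin{equation*}
    C(\bfa) = \frac{\zeta(a_{m+1}) \dotsm \zeta(a_k)}{(m-1)!},
\end{equation*}
as claimed, with the stated error.

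The main obstacle—or rather, the only subtle point—is handling the degenerate solutions to the original equation \eqref{eq:multiplicative} versus the solutions of the auxiliary counting identity: the proposition counts \emph{all} solutions (degenerate included), so one does not actually need to exclude $x_i = x_j$ here, which simplifies matters. The real care is in justifying that all the accumulated error terms genuinely stay at the level $O_\bfa(X(\log X)^{m-2})$; the convergence of $\sum_P (\log P)^{j}/P$ for each $0 \leq j \leq m-1$ is the crucial input, and it holds precisely because $a_{m+1}, \dotsc, a_k \geq 2$. If $m = 1$ the statement reads $\cN_1 = C(\bfa) X + O(X/\log X)$ with $C(\bfa) = \zeta(a_2)\dotsm\zeta(a_k)$, and the argument degenerates gracefully (the inner sum $\sum_{n \leq X/P} d_1(n) = \lfloor X/P \rfloor$), so no separate treatment is needed. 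I would also remark that the special case $\bfa = (1,1)$ recovers the stated $\cN_1(xy = z, [X]) = X \log X + O(X)$ used earlier in~\S\ref{sec:constructions}.
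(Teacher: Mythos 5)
Your proof is correct, and it takes a genuinely different route from the paper. The paper inducts on $k-m$: for the base case $m=k$ it cites the classical summatory asymptotic for $d_m$, and in the inductive step it fixes only $x_k=x$, sums $F(X/x^{a_k})$ over $x\leq X^{1/a_k}$, and picks up one factor $\zeta(a_k)$ per step. You instead fix \emph{all} of $x_{m+1},\dotsc,x_k$ at once, reduce to a single application of the $d_m$-asymptotic, and obtain the full product $\zeta(a_{m+1})\dotsm\zeta(a_k)$ in one sweep. Both arguments hinge on the same two facts --- the Dirichlet-type asymptotic $\sum_{n\leq Y}d_m(n)=YQ_{m-1}(\log Y)+O(Y^{1-1/m+\eps})$, and the convergence of $\sum_P P^{-1}$ (and of $\sum_P (\log P)^j/P$) because every exponent $a_i$ with $i>m$ is at least $2$ --- so the underlying mechanism is the same. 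Your version avoids the inductive bookkeeping but pushes the work into the error terms of a multi-variable sum. One small point to tighten: the bound $\sum_{P\leq X}(X/P)^{1-1/m+\eps}=O(X^{1-1/m+\eps})$ requires $\sum_P P^{-(1-1/m+\eps)}$ to converge, which holds for $m\geq 2$ (since then $a_i(1-1/m+\eps)>1$) but fails for $m=1$; there one should instead use the cardinality bound $\#\{P\leq X\}=O(X^{1/2+\eps})$ (as $P\geq(\prod_{i>m}x_i)^2$), giving an error $O(X^{1/2+\eps})$, which is still $o(X/\log X)$ and so absorbed. With that adjustment the argument goes through and is, if anything, a little cleaner than the paper's induction.
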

%--------------------------------------%
\begin{proof}
We induct on the value of $k - m$.
The case $m = k$ is classical (see for instance the book of Montgomery and Vaughan~\cite[\S2.1.1, Exercise 18]{Montgomery2007-ts}).
Indeed, we have
\begin{equation*}
    \cN_1\p[\big]{ x_1 \dotsm x_k = y, [X]}
    = X P_k(\log X) + O_k \p[\big]{X^{1 - 1/k}(\log X)^{k - 2}},
\end{equation*}
where $P_k$ is a real polynomial of degree $k-1$ and leading coefficient $1/(k-1)!$.

Now suppose that $m < k$, whence $a_k \geq 2$.
Assume the induction hypothesis that
\begin{equation*}
    F(X) \defined
    \cN_1\p[\big]{ x_1^{a_1} \dotsm x_{k-1}^{a_{k-1}} = y, [X]}
    = C X (\log X)^{m-1} + O_{(a_1, \dotsc, a_{k-1})}\p[\big]{X (\log X)^{m-2}},
\end{equation*}
as $X \to \infty$, where $C = C(a_1, \dotsc, a_{k-1}) = \zeta(a_{m+1}) \dotsm \zeta(a_{k-1}) / (m - 1)! > 0$.
Using the fact that $\sum_{n=1}^{\infty}n^{-t}(\log n)^h$ absolutely converges for all $h\geq 0$ and $t>1$, the binomial theorem implies that
\begin{equation*}
    \sum_{x \leq X^{1/a_k}}\frac{X \p[\big]{\log (X/x^{a_k})}^{m-1}}{x^{a_k}}
    = \p[\bigg]{ \sum_{x \leq X^{1/a_k}} \frac{1}{x^{a_k}} }X(\log X)^{m-1} + O_{a_k,m}\p[\big]{ X(\log X)^{m-2}}.
\end{equation*}
Now note that the number of tuples $(x_1, \dotsc, x_k, y) \in [X]^{k+1}$ with $x_1^{a_1} \dotsb x_{k}^{a_k} = y$ and $x_k = x$ is given by $F(X/x^{a_k})$.
In particular, such a tuple can only exist if $x \leq X^{1/a_k}$.
Thus, by the induction hypothesis, we have that $\cN_1\p[\big]{ x_1^{a_1} \dotsm x_{k}^{a_{k}} = y, [X]}$ is equal to
\begin{align*}
    \sum_{x \leq X^{1/a_k}}\!\!\!\! F\p[\big]{X/x^{a_k} }
    &= \sum_{x \leq X^{1/a_k}} \p[\bigg]{ \frac{C X \p[\big]{\log\p{X/x^{a_k}}}^{m-1}}{x^{a_k}} + O_{(a_1, \dotsc, a_{k-1})}\p[\Big]{\frac{X (\log X)^{m-2}}{x^{a_k}}} }\\
    &= \p[\bigg]{ \sum_{x \leq X^{1/a_k}} \frac{1}{x^{a_k}} } C X (\log X)^{m-1}
    + O_{(a_1, \dotsc, a_k)}\p[\big]{X (\log X)^{m-2}}.
\end{align*}
Since $a_k \geq 2$, we have the tail estimate $\sum_{x > Y} x^{-a_k} = O(Y^{1 - a_k})$ as $Y \to \infty$.
Therefore, we get $C(a_1, \dotsc, a_k) = \zeta(a_k) C(a_1, \dotsc, a_{k-1})$, and we are done.
\end{proof}
%--------------------------------------%

We are now ready to prove \Cref{thm:upper-bound-a} by following the same strategy as in our proof of \Cref{thm:upper-bound}.

%--------------------------------------%
\begin{proof}[Proof of \Cref{thm:upper-bound-a}]
By \Cref{lem:compact-a}, there is an $(r-1)$-colouring $\xi$ of the real interval $(1, I_\bfa(r-1)]$ without monochromatic solutions to \eqref{eq:multiplicative}.
We construct an $r$-colouring of the integer interval $[2,N]$ in the following way.
For $M = N^{1/I_\bfa(r-1)}$, we colour the interval $[2,M]$ of the new colour $r$ and use $\xi$ to fill the interval $(M,N]$ as follows:
\begin{equation*}
    c(x) \defined \begin{cases}
        r & \text{if $2 \leq x \leq M$} \\
        \xi\p[\big]{\log x / \log M} & \text{if $M < x \leq N$.}
    \end{cases}
\end{equation*}
Since every $x \in (M, N]$ satisfies $\log x / \log M \in (1, I_\bfa(r-1)]$, this defines a colouring of the entire real interval $[2,N]$.

We claim that the colouring $c$ has no monochromatic solutions to \eqref{eq:multiplicative} with colours lying in $\set{1, \dotsc, r - 1}$.
Indeed, if we had such a solution, then $M < x_1, \dotsc, x_k, y \leq N/2$ and $x_i = M^{x_i'}$, $y = M^{y'}$ for some $x_i', y' \in (1, I_\bfa(r-1)]$.
But $c(x) = \xi(x_i') = c(y) = \xi(y')$ for all $i$, and \eqref{eq:multiplicative} implies that $a_1 x_1' + \dotsb a_k x_k' = y'$, contradicting our choice of $\xi$.

Therefore, all monochromatic solutions are in colour $r$ and must come from the interval $[2,M]$ alone.
By \Cref{prop:solution-count}, there are
\begin{equation*}
    C(\bfa) M (\log M)^{m(\bfa)-1} + O\p[\big]{ M (\log M)^{m(\bfa) - 2}}
\end{equation*}
solutions to \eqref{eq:multiplicative} in this interval.
The leading term gives
\begin{align*}
    C(\bfa) M (\log M)^{m(\bfa)-1}
    &= C_{r,\bfa} N^{1/I(r-1)} (\log N)^{m(\bfa) - 1} + O\p[\big]{N^{1/I(r-1)} (\log N)^{m(\bfa)-2}},
\end{align*}
where $C_{r,\bfa} = C(\bfa)/(I_\bfa(r-1))^{m(\bfa)-1}$, and $C(\bfa)$ is given by \Cref{prop:solution-count}.
\end{proof}
%--------------------------------------%

To compare the bounds given by \Cref{thm:lower-bound-a} and \Cref{thm:upper-bound-a}, we need to understand the relationship between the Schur numbers $R_\bfa(r)$ and $I_\bfa(r)$.
For arbitrary $\bfa$, we show that these quantities coincide when $r \leq 2$.
In particular, the following result demonstrates that the exponent of $1/R_\bfa(r - 1)$ in \Cref{thm:lower-bound-a} is sharp for all $r \leq 3$.

%--------------------------------------%
\begin{proposition}
\label{prop:rado-two-colour}
Let $\bfa = (a_1, \dotsc, a_k) \in \NN^{k}$ and $A = a_1 + \dotsb + a_k$.
If $a_1 = 1$, then
\begin{align*}
    I_{\bfa}(1) = R_{\bfa}(1) &= A, \\
    I_{\bfa}(2) = R_{\bfa}(2) &= A^2 + A - 1.
\end{align*}
\end{proposition}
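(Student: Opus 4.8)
The plan is to establish the four equalities by proving two inequalities for each: a lower bound on the interval Rado number $I_\bfa(r)$ via an explicit sum-free colouring of a real interval, and a matching upper bound on the integer Rado number $R_\bfa(r)$ by a pigeonhole/case argument showing every colouring of $[A]$ (resp.\ $[A^2+A-1]$) has a monochromatic solution. Since $I_\bfa(r) \leq R_\bfa(r)$ always holds, it suffices to produce a monochromatic-free colouring of the real interval $[1, T)$ for $T = A$ (resp.\ $T = A^2+A-1$) — which gives $I_\bfa(r) \geq T$ — together with the bound $R_\bfa(r) \leq T$. I would handle the one-colour case first as a warm-up, then the two-colour case.

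For $r = 1$: every number in $[1,A]$ gets the single colour, so I need $x_1 + \dotsb + x_k$ (weighted by $\bfa$, with $a_1 = 1$) to be forced to equal $y$ for some choice in $[A]$. Taking $x_1 = \dotsb = x_k = 1$ gives $a_1 + \dotsb + a_k = A = y \in [A]$, so $R_\bfa(1) \leq A$. Conversely, colour the real interval $[1, A)$ monochromatically: for any $x_1, \dotsc, x_k \geq 1$ we have $a_1 x_1 + \dotsb + a_k x_k \geq A$, so $y \geq A$ is forced and $y \notin [1,A)$; hence $I_\bfa(1) \geq A$. Combined with $I_\bfa(1) \leq R_\bfa(1) \leq A$, all three coincide with $A$.

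For $r = 2$: I expect the main work here. For the lower bound $I_\bfa(2) \geq A^2 + A - 1$, I would exhibit the $2$-colouring of $[1, A^2 + A - 1)$ given by colouring $[1, A) \sqcup [A^2, A^2+A-1)$ red and $[A, A^2)$ blue — the multiplicative analogue of the extremal interval colourings from Section~\ref{sec:constructions}, pulled back through the logarithm. One checks it is solution-free: if $x_1, \dots, x_k, y$ are all red, then either all lie in $[1,A)$, forcing $y \geq A$ (as in the $r=1$ case) — contradiction unless all $x_i < A$ and then $y \in [A, A^2)$ is blue — or some $x_i \geq A^2$, forcing $y \geq A^3 > A^2+A-1$; the remaining mixed cases push $y$ out of the red set similarly. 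If all are blue, each $x_i \in [A, A^2)$, so $y = \sum a_i x_i \in [A^2, A^3)$, and one verifies $y < A^2+A$ is impossible unless every $x_i$ is close to $A$, in which case $y \geq A^2 + (\text{something})$ lands outside $[A, A^2)$ — the precise bookkeeping is the delicate part, since one must rule out $y \in [A^2, A^2+A-1)$, i.e.\ exactly the red interval. For the upper bound $R_\bfa(2) \leq A^2 + A - 1$: given a $2$-colouring of $[A^2+A-1]$, consider the element $1$. Say $c(1) = \text{red}$. Then $c(A) = c(a_1 \cdot 1 + \dots + a_k \cdot 1)$ must be blue, else $(1,\dots,1,A)$ is a red solution. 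Now look at $A, 2A - 1, 3A-2, \dots$: setting $x_1 = x_2 = \dots = x_k = A$ except adjusting $x_1$ (which has coefficient $1$) lets me hit $y = A \cdot A = A^2$ from all-blue inputs in $[A, A^2]$, forcing a contradiction unless one of finitely many intermediate values is red, and I then build a red solution from $1$, $A$-sized gaps, etc. The bookkeeping for which intermediate values must be which colour, and closing every case, is the crux.

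The main obstacle is the two-colour upper bound $R_\bfa(2) \leq A^2 + A - 1$: unlike the classical Schur case where one uses a short explicit chain of implications, here the coefficients $a_i$ are arbitrary, so the argument must track how the weighted combination $\sum a_i x_i$ moves as one toggles a single coordinate (necessarily the $x_i$ with $a_i = 1$, which is why the hypothesis $a_1 = 1$ is essential), and one must ensure the resulting solutions stay inside $[A^2+A-1]$ while exhausting all colour patterns. I would organise this as: first pin down $c(1)$ and $c(A)$, then use scalings $x_1 = 1$ vs.\ $x_1 = A$ together with all $x_j = A$ for $j \geq 2$ to force the colour of $A^2$ and of $A^2 + A - 1$, and finally derive a contradiction in every case — the count $A^2 + A - 1$ being exactly what makes the last step tight.
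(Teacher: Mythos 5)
Your plan for the $r=1$ case and your proposed colouring for the lower bound on $I_\bfa(2)$ are both exactly the ones the paper uses, but there are two significant issues.

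First, the upper bound $R_\bfa(2) \leq A^2 + A - 1$, which you correctly identify as the hard direction and attempt to prove from scratch, is not proved in the paper at all: it is a known theorem of Funar (1990), which the authors simply cite. Your sketch of the chain of forced colours (pinning down $c(1)$, $c(A)$, $c(A^2)$, $c(A^2+A-1)$, \dots) is a plausible start, but as you yourself acknowledge it does not close; reproducing Funar's full argument for arbitrary $\bfa$ would be substantial extra work. Realising that this half is a citation, not a new proof, is important for correctly scoping the problem.

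Second, your verification that the colouring $R = [1,A) \sqcup [A^2, A^2+A-1)$, $B = [A, A^2)$ is solution-free has both a miscalculation and an overcomplication. The clean way to organise it is the single averaging observation that any solution with $x_i, y \geq 1$ satisfies $\min_i x_i \leq y/A \leq \max_i x_i$. The lower bound immediately kills the all-blue case: if every $x_i \geq A$ then $y \geq A^2$, so $y \notin B$; there is no ``delicate bookkeeping'' about $x_i$ being close to $A$. For the all-red case, your claim that $x_j \geq A^2$ forces $y \geq A^3$ is wrong — it forces only $y \geq a_j A^2 + (A - a_j)$, which for $a_j = 1$ is $A^2 + A - 1$. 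That is still enough: since the interval is half-open we need $y < A^2 + A - 1$, so $\sum_{i\neq j} a_i x_i = y - a_j x_j < A^2 + A - 1 - a_j A^2 \leq A - a_j = \sum_{i\neq j} a_i$, contradicting $x_i \geq 1$. With these two fixes the proof matches the paper's.
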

%--------------------------------------%
\begin{proof}
By averaging, if $x_1, \dotsc, x_k, y \geq 1$ satisfy \eqref{eq:additive}, then
\begin{equation}
\label{eq:average}
    \min_{i \in [k]} x_i \leq \frac{y}{A} \leq \max_{i \in [k]} x_i.
\end{equation}
It follows that \eqref{eq:additive} admits solutions over the real interval $[1,T]$ if and only if $T \geq A$.
This gives the first equality.

We now address the second equality.
It was determined by Funar~\cite{Funar1990-is} that $R_{\bfa}(2) = A^2 + A - 1$.
Thus, it suffices to find a $2$-colouring of the real interval $[1, R_{\bfa}(2))$ with no monochromatic solutions to \eqref{eq:additive}.
Consider the colouring $[1, A^2 + A - 1) = R \sqcup B$ with
\begin{equation*}
    R \defined [1, A) \sqcup [A^2, A^2 + A - 1), \quad B \defined [A, A^2).
\end{equation*}
Suppose for a contradiction that there is a monochromatic solution $(x_1, \dotsc, x_k, y)$ to \eqref{eq:additive} in this colouring.
The lower bound in \eqref{eq:average} implies that this solution cannot come from the set $B$, so we must have $x_1, \dotsc, x_k, y \in R$.
As there are no solutions to \eqref{eq:additive} over $[1, A)$, we have $y \geq A^2$.
Hence, by the upper bound in \eqref{eq:average} and the definition of $R$, there exists $j \in [k]$ such that $x_j \geq A^2$.
Therefore,
\begin{equation*}
    (a_1 x_1 + \dotsb + a_k x_k) - a_j x_j = y - a_j x_j < A^2 + A - 1 - a_j A^2 \leq A - a_j,
\end{equation*}
which is absurd, as it implies $x_i < 1$ for some $i \in [k] \setminus \set{j}$.
\end{proof}
%--------------------------------------%

%----------------------------------------------------------%
%----------------------------------------------------------%

\section{Remarks and further directions}
\label{sec:questions}

With \Cref{thm:lower-bound} and \Cref{thm:upper-bound} combined, we determined the order of growth of $\cN_r\p[\big]{x y = z, [2,N]}$, up to a logarithmic factor, for $r \in \set{2, 3, 4}$.
The most pressing question is then to determine the order of growth for $r \geq 5$.
%--------------------------------------%
\begin{question}
Up to to polylogarithmitc factors, what is the minimum number of monochromatic solutions to $x y = z$ in an $r$-colouring of $\set{2, \dotsc, N}$, when $r \geq 5$?
\end{question}
%--------------------------------------%

In \Cref{thm:two-colours}, we gave a precise answer for the number of monochromatic solutions to $xy=z$ for two colours. It would be interesting if these methods could be applied to the three colour case.
%--------------------------------------%
\begin{question}
Is it true that every $3$-colouring of $[2,N]$ has at least
\begin{equation*}
    \p[\bigg]{\frac{1}{5 \cdot 2^{4/5}} - o(1)}N^{1/5} \log N
\end{equation*}
monochromatic solutions to $x y = z$ as $N \to \infty$?
\end{question}
%--------------------------------------%

If this is true, then the $3$-colouring of $[2,N]$
%--------------------------------------%
\begin{equation*}
\begin{tikzpicture}[baseline=(current  bounding  box.center), scale=6.5]
\clip (-0.1,0.04) rectangle (1.6+0.1,-0.15);
%\foreach \x/\xtext in
%{0/$2$, 1/$M$, 2/$M^2$, 3/$2M^2$, 5/$4M^4$, 6/$8M^4$,7/$8M^5$,8/$16M^5$}
%\draw (\x*0.2,-0.02) node[below] {\strut \xtext};
\foreach \x/\xtext in
{0/$2$, 1/$\frac{N^{1/5}}{2^{4/5}}$, 2/$\frac{N^{2/5}}{2^{8/5}}$, 3/$\frac{N^{2/5}}{2^{3/5}}$, 5/$\frac{N^{4/5}}{2^{6/5}}$, 6/$\frac{N^{4/5}}{2^{1/5}}$,7/$\frac{N}{2}$,8/$N$}
\draw (\x*0.2,-0.02) node[below] {\strut \xtext};
\ccint{0}{0.2}{Red};
\ocint{0.2}{0.4}{Cerulean};
\ocint{0.4}{0.6}{Red};
\ocint{0.6}{1.0}{ForestGreen};
\ocint{1.0}{1.2}{Red};
\ocint{1.2}{1.4}{Cerulean};
\ocint{1.4}{1.6}{Red};
\end{tikzpicture}
\end{equation*}
%--------------------------------------%
asymptotically minimises the number of monochromatic solutions to $x y = z$.

%----------------------------------------------------------%

\subsection{Schur-type numbers}

It was observed by Schur~\cite{Schur1916-lc} that $S(r+1) \geq 3S(r) - 1$ for all $r \geq 1$.
Iterating this inequality gives $S(r) \geq (3^r + 1)/2$.
This bound is far from sharp, since Exoo~\cite{Exoo1994-tu} proved that $S(r) \geq c(321)^{r/5} > c(3.17176)^r$ for some $c > 0$.
Schur himself established the super-exponential upper bound $S(r) \leq \floor{ e\, r! }$.
It remains an open question whether an exponential upper bound holds for $S(r)$.
In their work, Abbott and Hanson~\cite{Abbott1972-sm} determined the following relation between $S(r)$ and $S^*(r)$.

%--------------------------------------%
\begin{theorem}[{\cite[Theorem 4.1]{Abbott1972-sm}}]
\label{thm:abott-hanson}
For every $r, t \geq 1$, we have
\begin{equation*}
    S^*(r + t) \geq 2S(r)S^*(t)  - S(r) - S^*(t) + 1.
\end{equation*}
\end{theorem}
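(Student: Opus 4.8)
The plan is to prove the inequality by the standard product (or blow-up) construction: we build an explicit $(r+t)$-colouring of the integer interval $\set{1, \dotsc, 2S(r)S^*(t) - S(r) - S^*(t)}$ that has no monochromatic solution to $x+y=z$ or to $x+y+1=z$, and then the definition of $S^*$ immediately yields $S^*(r+t) \geq 2S(r)S^*(t) - S(r) - S^*(t) + 1$.

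Write $p = S(r)$ and $q = S^*(t)$. First I would fix a sum-free $r$-colouring $\phi \from \set{1, \dotsc, p-1} \to \set{1, \dotsc, r}$, which exists by the definition of $S(r)$, together with a $t$-colouring $\psi \from \set{1, \dotsc, q-1} \to \set{1, \dotsc, t}$ with no monochromatic solution to either $x+y=z$ or $x+y+1=z$, which exists by the definition of $S^*(t)$. Tile $\set{1, \dotsc, 2pq-p-q}$ into $p$ consecutive \emph{units}: for $1 \leq a \leq p-1$, unit $a$ consists of a \emph{block} of length $q-1$ followed by a \emph{gap} of length $q$, while unit $p$ is a single block of length $q-1$; note that $p(q-1)+(p-1)q = 2pq-p-q$. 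Colour the gap of unit $a$ entirely with $\phi(a) \in \set{1, \dotsc, r}$, and colour the $i$-th element of every block with $\psi(i)$, drawn from the disjoint palette $\set{r+1, \dotsc, r+t}$. Equivalently, writing $n = (a-1)(2q-1) + o$ with $a \geq 1$ and $1 \leq o \leq 2q-1$, we set $c(n) = \psi(o)$ when $o \leq q-1$ and $c(n) = \phi(a)$ when $o \geq q$; in the latter case $a \leq p-1$ automatically, so $\phi(a)$ is defined.

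The heart of the proof is to check that $c$ has no monochromatic solution to $x + y + \eps = z$ for $\eps \in \set{0, 1}$. Since the two palettes are disjoint, any monochromatic triple lies entirely inside blocks or entirely inside gaps. If $x$ and $y$ lie inside blocks, say with within-block positions $o_x, o_y \in \set{1, \dotsc, q-1}$ and in the blocks of units $a_x$ and $a_y$, then $z = x + y + \eps$ falls inside unit $a_x + a_y - 1$ at offset $o_x + o_y + \eps$; for $z$ to share the colour of $x$ it must lie in the block of that unit, which forces $o_x + o_y + \eps \leq q - 1$ and $\psi(o_x) = \psi(o_y) = \psi(o_x + o_y + \eps)$ — a monochromatic solution for $\psi$ (to $x+y=z$ if $\eps = 0$, to $x+y+1=z$ if $\eps = 1$), a contradiction. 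If instead $x$ and $y$ lie inside gaps, in units $a_x$ and $a_y$, then adding their within-gap positions, which lie in $\set{q, \dotsc, 2q-1}$, carries one whole unit over, so $z$ lands inside unit $a_x + a_y$; for $z$ to share the colour of $x$ it must lie in the gap of that unit, and then $\phi(a_x) = \phi(a_y) = \phi(a_x+a_y)$ contradicts the sum-freeness of $\phi$. The conceptual point is that the global ``$+1$'' is absorbed at the \emph{inner} (block) scale, where it perturbs the within-block sum — which is exactly why $\psi$ is required to avoid both equations — whereas at the \emph{outer} (gap) scale the addition is exact, so $\phi$ need only be sum-free; this asymmetry is what produces the shape $2S(r)S^*(t) - S(r) - S^*(t) + 1$.

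I expect the main obstacle to be the routine but delicate bookkeeping that pins down, in each case, exactly which block or gap contains $z = x + y + \eps$: one must handle the boundary subcases where $z$ is the first or last element of a unit, where the carry from a within-gap sum is actually $2$ rather than $1$ (so that $z$ spills into the following block, which again has the wrong palette), and where $a_x + a_y$ exceeds $p$ so that the relevant gap does not exist and hence no such $z$ is available. Once these are dispatched, every case collapses to a monochromatic solution of a forbidden equation for $\phi$ or for $\psi$, which completes the construction and therefore the proof of the claimed bound.
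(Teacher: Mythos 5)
The paper states this result as a citation to \cite[Theorem 4.1]{Abbott1972-sm} and gives no proof, so there is no internal argument to compare against; I can only assess your proposal on its own terms. Your blow-up construction is correct. Writing $p=S(r)$ and $q=S^*(t)$, the tiling $(p-1)(q-1+q)+(q-1)=2pq-p-q$ is right; the two palettes are disjoint so any monochromatic triple must be all-block or all-gap; in the block case the carry on the unit index is $0$ and the perturbation $\eps$ lands on the inner offset, so one needs $\psi$ to avoid both $x+y=z$ and $x+y+1=z$; in the gap case the carry is exactly $1$ (because gaps have length exactly $q$ and blocks length $q-1$, so $o_x+o_y+\eps\in\{2q,\dots,4q-2\}$ induces a single carry with new offset $o_x+o_y+\eps-2q+1$), so the derived relation $\phi(a_x)=\phi(a_y)=\phi(a_x+a_y)$ contradicts sum-freeness alone; the double-carry case $o_x+o_y+\eps=4q-1$ lands at offset $1$ of the next unit, a block position, so the palettes mismatch; and if $a_x+a_y\geq p$ the needed gap simply does not exist in $\{1,\dots,2pq-p-q\}$. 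The explanation of \emph{why} $S^*$ must sit at the inner scale while $S$ suffices at the outer scale is exactly the point of the Abbott--Hanson formulation, and this is almost certainly the same argument as in their paper.
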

%--------------------------------------%

In particular, we have that $S^*(r+1) \geq 3S(r) - 1$.
This implies that even with the additional constraint, $S^*(r)$ cannot grow significantly slower\footnote{In a previous version of this manuscript, we had a proof of $S^*(r+1) \geq 3S^*(r) - 1$ and a proof of $I(r+1) \geq 3S(r) - 1$.
We thank Paul Balister for noticing that our proof gave $S^*(r+1) \geq 3S(r)-1$.
In fact, this is a consequence of \Cref{thm:abott-hanson} by Abbott and Hanson.
In particular, we do not have $S^*(r+1) = 3S^*(r) - 1$ for all $r \geq 1$, which negatively answers a question we had previously posed.}
 than $S(r)$.

\subsection{Product Schur numbers}

One could define the analogue of Schur numbers for the product equation $x y = z$.
Namely, let $P(r)$ be the minimum $N$ such that every $r$-colouring of $\set{2,\dotsc,N}$ has a nontrivial monochromatic solution to $x y = z$.
One quickly notices that this is not a very interesting quantity as in fact we have $P(r) = 2^{S(r)}$.
Indeed, if $N = 2^{S(r)}$, then we can find a solution to $x y = z$ in the set $\set{2, 2^2, \dotsc, 2^{S(r)}}$.
However, if $N < 2^{S(r)}$ then consider the colouring $c \from [N] \to [r]$ defined by $c(x) = \chi(\Omega(x))$, where $\Omega(x)$ is the number of prime factors of $x$ with multiplicity and $\xi \from [S(r)-1] \to [r]$ is a colouring without monochromatic solutions to $x + y = z$.
The same applies for other multiplicative equations.

\subsection{Interval Schur numbers}

Consider now the quantity $I(r)$.
\Cref{lem:compact} informs us that for all real $T>1$, the half-open interval $[1,T)$ admits a sum-free $r$-colouring if and only if $T\leq I(r)$.
Moreover, for any such $T$, a sum-free $r$-colouring of $[1,T)$ is given by taking the restriction of a sum-free $r$-colouring of $[1,I(r))$ (which exists by \Cref{lem:compact}).
This is analogous to the integer setting: a sum-free $r$-colouring of $[N]$ exists if and only if $N<S(r)$, and, in the latter case, such a colouring can be found by restricting a suitable $r$-colouring of $[S(r)-1]$.

In spite of these similarities, note that it is not clear a priori from the definition of $I(r)$ that it must be an integer, or even a rational number.
Nevertheless, we believe that the bounds given by \Cref{thm:lower-bound} and \Cref{thm:upper-bound} should match up to logarithmic factors.
This claim would follow immediately from the following conjecture.

%--------------------------------------%
\begin{conjecture}
For all $r$, we have $I(r) = S(r)$.
\end{conjecture}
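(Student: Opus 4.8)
Since $I(r) \le S(r)$ holds unconditionally (\Cref{lem:rel-schur}), the conjecture amounts to the lower bound $I(r) \ge S(r)$. By the half-open reformulation of \Cref{lem:compact}, this is in turn equivalent to producing, for every $r$, an $r$-colouring of the half-open real interval $[1, S(r))$ with no monochromatic solution to $x+y=z$. The naive candidate is the step function $x \mapsto \chi(\floor{x})$ built from a sum-free colouring $\chi$ of an integer interval; as the proof of \Cref{lem:rel-schur} shows, this is sum-free exactly when $\chi$ avoids both $x+y=z$ and $x+y+1=z$, and so only yields colourings of $[1, S^*(r))$. Since $S^*(4)=41<45=S(4)$, for $r\ge 4$ no step function aligned to the integer lattice can attain the conjectured value, so any proof must construct colourings that are invisible at the integer scale; the plan is therefore to build genuinely continuous sum-free colourings of $[1,S(r))$.

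\textbf{Step 1: perturb the breakpoints.} The main line of attack is to cut $[1,S(r))$ at a carefully chosen increasing sequence $1 = t_0 < t_1 < \dots < t_{S(r)-1} = S(r)$ with $t_j$ close to $j$, colour each block $[t_{j-1}, t_j)$ by $\chi(j)$ for a suitable sum-free $\chi$ on $[S(r)-1]$, and pick the $t_j$ so that the carry behaviour is rigid: whenever $x \in [t_{j-1},t_j)$ and $y \in [t_{k-1},t_k)$, the sum $x+y$ lands in a block $[t_{\ell-1},t_\ell)$ with $\ell \in \set{j+k-1,\,j+k}$, and the value of $\ell$ that actually occurs depends only on $j,k$ (not on the exact positions of $x,y$). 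If the breakpoints can be placed so that the forced equation is always $x+y=z$ and never the off-by-one variant, then $\chi$ need only be sum-free in the usual sense and the full range $[1,S(r))$ is recovered. I would first test this for $r=4$, where the extremal Schur colourings of $[44]$ are known, to see whether a valid placement of the $44$ breakpoints exists; a success there would indicate the general mechanism, and a failure would identify precisely which additive configurations inside the extremal colourings obstruct the smoothing.

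\textbf{Step 2: recursion as a supplement.} One can also lift Schur's product construction $S(r+1)\ge 3S(r)-1$ to the continuous setting: from a sum-free $r$-colouring of $[1,I(r))$ one obtains a sum-free $(r+1)$-colouring of $[1,3I(r)-1)$ by using the new colour on a central block and two affine copies of the given colouring on the outer blocks, giving $I(r+1)\ge 3I(r)-1$. This is insufficient by itself, since $S(r+1)$ can strictly exceed $3S(r)-1$, but combined with the Abbott--Hanson inequality (\Cref{thm:abott-hanson}) and with continuous analogues of the sharper constructions used to lower-bound $S(r)$, it could reduce the conjecture to a finite verification in each remaining ``gap''. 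The underlying hope is that every construction that is tight for $S(r)$ admits a continuous counterpart of the same length.

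\textbf{Main obstacle.} The crux is that for $r\ge 4$ we have essentially no structural grip on the extremal sum-free colourings of $[S(r)-1]$ --- even computing $S(r)$ is extremely hard, and the verification $I(r)=S(r)$ for $r\le 3$ rests entirely on enumerating all extremal integer colourings and checking the $S^*$-condition by hand (as in \Cref{lem:small-schur}). Since that condition genuinely fails from $r=4$ on, a proof must either exhibit a provably optimal ``continuous Schur'' construction that does not proceed through the integer problem, or establish a transference principle showing that some optimal integer sum-free colouring can always be perturbed into a continuous colouring of the same length. Neither is currently available, and I expect the breakpoint-placement step to be where the difficulty concentrates: it is essentially the question of whether the extremal Schur colourings are robust under small dilations, about which nothing is presently known.
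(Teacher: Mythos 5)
The statement you are addressing is labelled a \emph{conjecture} in the paper, and the authors do not prove it; they only verify $I(r)=S(r)$ for $r\in\set{1,2,3}$ via \Cref{lem:small-schur}, record that $S^*(4)=41<45=S(4)$ (so the step-function transfer via \Cref{lem:rel-schur} cannot close the gap from $r=4$ onwards), and leave the general case open. Your proposal is not a proof either, and you say so explicitly at the end; what you have written is a research plan. Judged as such, it is accurate and well aligned with the paper's own discussion: you correctly reduce to showing $I(r)\ge S(r)$, correctly observe that the $S^*$-route is dead from $r=4$, and correctly identify that some genuinely continuous (or ``perturbed breakpoint'') construction would be needed. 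Your Step 2 recursion $I(r+1)\ge 3I(r)-1$ is true but slightly weaker than what the paper records in its footnote, namely $I(r+1)\ge 3S(r)-1$ (which follows by placing a sum-free integer colouring of $[S(r)-1]$, inflated to half-open unit blocks, at both ends of a central new-colour block); either form is, as you note, insufficient because $S(r+1)$ can strictly exceed $3S(r)-1$. The key unresolved point you flag --- whether the optimal sum-free colouring of $[S(r)-1]$ can always be perturbed into a continuous sum-free colouring of $[1,S(r))$ by shifting the breakpoints off the integer lattice --- is precisely where the difficulty lies, and nothing in the paper resolves it. In short: there is no gap in your reasoning, but there is also no proof, because none exists; the honest answer is that this remains open, and your writeup should be presented as an analysis of the conjecture rather than as a proof attempt.
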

%--------------------------------------%

%----------------------------------------------------------%

\subsection{Stability}

In \Cref{thm:two-colours}, we established that in a $2$-colouring of $[2, N]$, the number of monochromatic solutions to $x y = z$ is asymptotically minimised by the colouring $[2, N] = R \sqcup B$ with $R = [2, M^{1/2}] \sqcup (M, N]$ and $B = (M^{1/2}, M/2]$, where $M = N/2$.
That is, the following colouring
%--------------------------------------%
\begin{center}
\begin{tikzpicture}[baseline=(current  bounding  box.center), scale=6.5]
\clip (-0.1,0.04) rectangle (0.6+0.1,-0.13);
\foreach \x/\xtext in
{0/$2$, 1/$(\frac{N}{2})^{\frac{1}{2}}$, 2/$\phantom{((}\frac{N}{2}\phantom{)^{\frac{1}{2}}}$, 3/$N$}
\draw (\x*0.2,-0.02) node[below] {\strut \xtext};
\ccint{0}{0.2}{Red};
\ocint{0.2}{0.4}{Cerulean};
\ocint{0.4}{0.6}{Red};
\end{tikzpicture}
\end{center}
%--------------------------------------%
While this colouring is roughly balanced, namely, both colours occur in roughly $N/2$ elements, this is far from necessary.
Indeed, let $A = ( (N/2)^{1/2}, (2N)^{1/2}]$ and observe that if an element in $(N/2, N]$ can be written as a product of two blue elements in the colouring above, then both factors of the product must be in $A$.
Since $(N/2, N]$ is product-free, this means that if an element in $(N/2, N]$ is not in the product set $A \cdot A$, then it can be turned blue without creating any additional monochromatic solutions.
However, $\card{A \cdot A} \leq o(N)$ by the multiplication table problem\footnote{Very precise estimates are available after the work of Ford~\cite{Ford2008-en}.}.
Therefore, almost every element in $(N/2, N]$ can be turned blue.
Of course, almost every element was already blue in the original colouring
%--------------------------------------%
\begin{center}
\begin{tikzpicture}[baseline=(current  bounding  box.center), scale=6.5]
\clip (-0.1,0.04) rectangle (0.4+0.1,-0.11);
\foreach \x/\xtext in
{0/$2$, 1/$N^{\frac{1}{2}}$, 2/$N$}
\draw (\x*0.2,-0.02) node[below] {\strut \xtext};
\ccint{0}{0.2}{Red};
\ocint{0.2}{0.4}{Cerulean};
\end{tikzpicture}
\end{center}
%--------------------------------------%
but this does not minimise the number of monochromatic solutions to $x y = z$.

As we just saw, there are some serious restrictions on how much structural information can be obtained from a stability statement for colourings with few monochromatic solutions to $x y = z$, even in two colours. For more colours, we discussed in \S\ref{sec:constructions} that even our templates sometimes allow for the possibility of a whole interval in the logarithmic scale to be arbitrarily coloured from some palette of colours without creating a single monochromatic solution.
This is compounded by the considerations above, which essentially stem from the fact that integer intervals satisfy $\card[\big]{[N] \cdot [N]} = o(N^2)$, whilst for real intervals we have $[1,N] \cdot [1,N] = [1, N^2]$.
Nevertheless, we expect that a stability result similar to \Cref{thm:stability} should hold for more than two colours.

%----------------------------------------------------------%

\subsection{General interval Rado numbers}

As with the Schur-type numbers, it is natural to speculate on the relationship between the quantities $I_\bfa(r)$ and $R_\bfa(r)$ for arbitrary $r$ and $\bfa$.
If, for a given $\bfa$, we had $I_\bfa(r)=R_\bfa(r)$ for all $r$, then our lower and upper bounds (\Cref{thm:lower-bound-a} and \Cref{thm:upper-bound-a} respectively) for the number of monochromatic solutions to \eqref{eq:multiplicative} would match up to polylogarithmic factors.
We showed in \Cref{prop:rado-two-colour} that this is true for $r \in \set{1, 2}$.
We therefore conclude by posing the following question.

%--------------------------------------%
\begin{question}
Given $k \geq 2$ and $\bfa = (a_1, \dotsc, a_k) \in \NN^k$, for which $r \geq 3$ is it true that $I_\bfa(r) = R_\bfa(r)$?
\end{question}
%--------------------------------------%

%----------------------------------------------------------%

\section*{Acknowledgements}

We are grateful to Sean Prendiville for asking such an interesting question at the 2022 British Combinatorial Conference and for introducing J. Chapman to L. Aragão, M. Ortega and V. Souza.
We thank Rob Morris for helpful comments on an earlier draft of this paper and
Julian Sahasrabudhe for helpful comments on a late draft of this paper.
L. Aragão was supported by the Coordenação de Aperfeiçoamento de Pessoal de Nível Superior - Brasil (CAPES) - Finance Code 001. J. Chapman is supported by the Heilbronn Institute for Mathematical Research.

%----------------------------------------------------------%

\printbibliography

%----------------------------------------------------------%

\end{document}